\newcommand{\B}{{\mathcal{B}}}
\newcommand{\RR}{{\mathbb{R}}}
\newcommand{\ds}{\displaystyle}
\newcommand{\der}{\partial}
\newcommand{\eps}{\varepsilon}
\newcommand{\dive}{\text{\normalfont div}}
\def\qed{\hfill$\square$\vspace{0.5cm}}    % to write a small square
\numberwithin{equation}{section}
\newtheorem{theo}{Theorem}[section]
\newtheorem{lemma}[theo]{Lemma}
\newtheorem{pr}[theo]{Proposition}
\newtheorem{rem}[theo]{Remark}
\begin{document}

\title[Carleman estimates for the parabolic transmission problem ]{Carleman estimates for the parabolic transmission problem and H\"older propagation of smallness across an interface}%\thanks{}}

%    Information for first author
\author[]{Elisa~Francini}
\address{Dipartimento di Matematica e Informatica ``U. Dini'',
Universit\`{a} di Firenze, Italy}
\email{elisa.francini@unifi.it}
 \author[]{Sergio~Vessella}
 \address{Dipartimento di Matematica e Informatica ``U. Dini'',
Universit\`{a} di Firenze, Italy}
\email{sergio.vessella@unifi.it}

%    General info
\date{\today}

\keywords{Carleman estimate, Parabolic transmission problem, Propagation of smallness}

\subjclass[2010]{35K10, 35R05, 35B45}
%%%%%%%%%%%%%%%%%%%%%%%%%%%%%%%%%
\begin{abstract} In this paper we prove a H\"older  propagation of smallness  for solutions to  second order parabolic equations  whose general anisotropic leading coefficient has a jump at an interface. We assume that the leading coefficient  is Lipschitz continuous with respect to the parabolic distance on both sides of the interface. The main effort consists in proving a local Carleman estimate for this parabolic operator.
\end{abstract}
%%%%%%%%%%%%%%%%%%%%%%%%%%%%%%%%%%

\maketitle

%\textcolor[rgb]{1,0.41,0.13}{controllare che non ci sia qualche dimostrazione mancante visto che continua a sopravvivere una appendice...
%}
%%%%%%%%%%%%%%%%%%%%%%%%%%%%%%%%%%%%%%
%\begin{enumerate}
%\item Dovremmo chiarire i ruoli di $u$, $v$ e $w$. Secondo me si fa un  po' di confuzione con tutte queste funzioni
%\item A volte si scrive la diendenza da $\alpha_\pm$, altre volte no. Dovremmo uniformare.
%\end{enumerate}
%%%%%%%%%%%%%%%%%%%%%%%%%%%%%%%%%%%%%%%%%%%%%%%
%\tableofcontents

\section{Introduction}\label{intro}

The main purpose of this paper is to study  unique continuation properties and  propagation of smallness for solutions to  second order parabolic equations whose anisotropic leading coefficients have jumps at an interface. Although there exist good general books and papers about Carleman estimates, unique continuation properties and  related  propagation of smallness (see \cite{Be-LR-1}, \cite{cal}, \cite{Ho63}, \cite{Ho85}, \cite{Is1}, \cite{Tr}, \cite{Zu}) and a lot of surveys and introductory papers on the subject and  on its several applications (see \cite{FI}, \cite{Kl1}, \cite{KlTi}, \cite{KSU}, \cite{Is4}, \cite{LeRosseau-Lebeau}, \cite{Ve1}, \cite{Ve4}, \cite{Ya1}), we would like to give to the non expert reader some basic notions and quick historic panorama on the subject.

We say that a linear partial differential equation $\mathcal{L}(u)=0$, enjoys a unique continuation property (UCP) in a connected
open set $\Omega\subset\mathbb{R}^{N}$ if the following property holds true \cite{hTat}:
for any open subset $\widetilde{\Omega}$ of $\Omega$
\begin{equation}\label{intr-1}
\mathcal{L}(u)=0\mbox{ in }\Omega \mbox{ and }u=0\mbox{ in }\widetilde{\Omega}\quad
\mbox{ imply }\quad u=0\mbox{ in }\Omega.
\end{equation}

We call quantitative estimate of unique continuation (QEUC) or stability estimate related to the UCP property \eqref{intr-1} the following type of result:
\begin{equation*}
\mathcal{L}(u)=0\mbox{ in }\Omega\mbox{ and } u \mbox{ small in }\widetilde{\Omega} \quad\mbox{ imply } \quad u \mbox{ small in }\Omega.
\end{equation*}
Of course, the research in these topic is of some interest  if either the function $u$  is not analytic or if the operator $\mathcal{L}$ has nonanalytic coefficients in $\Omega$.

 In this sense  Carleman, in his paper \cite{ca} in 1939,  marked a true milestone, because he proved that the 2D elliptic operator $\mathcal{L}(u)=\Delta u+a(x)u$, where $a$ is a bounded function, enjoys the UCP. At the same time  Carleman conceived a highly constructive method that wide opened the doors to  quantitative estimates of unique continuation for equations with nonanalytic coefficients. Since the 1950s the investigation on UCP has been extended to more general differential operators with a special attention  to the regularity, in the first place, of the coefficients of the principal part of the operators.
For instance, it was proved in \cite{pl}, see also \cite{hMan}, \cite{Mi}, that the UCP for the second order elliptic equations doesn't hold true if the coefficients of principal part is  in $C^{0,\alpha}(\mathbb{R}^n)$ for $\alpha<1$ and $n>2$. On the other side,  the UCP applies when the coefficients of the principal part are Lipschitz continuous (see \cite{AKS}, \cite{ho}) and, consequently, a H\"older type propagation of smallness can be proved in the form of three-sphere inequality (\cite{Lan}). We refer to \cite{ARRV} for an extensive and detailed analysis of connection between the UCP and  propagation of smallness for second order elliptic equation. We  should mention that the UCP for the second order elliptic equations with two variables with $L^{\infty}$ coefficients can be deduced from the theory of quasiconformal mappings (\cite{bjs}, \cite{s}, \cite{alma} ).

\bigskip

In the parabolic context, broadly speaking, the investigation about UCP is focused on two main topics: (i) backward uniqueness and backward stability estimates,  (ii) spacelike unique continuation properties (which include the noncharacteristic Cauchy problem) and their quantitative versions.
In this paper we concentrate on the second issue. For backward uniqueness and stability we refer to \cite{isakov}, \cite{Ve4}, \cite{Ya1}.

Let us consider the operator
\begin{equation}\label{intr-2p}
\mathcal{L}(u)=\dive(A(x,t)Du)-\partial_tu,
\end{equation}
where $A(x,t)$, $(x,t)\in\mathbb{R}^{n+1}$, is a symmetric $n\times n$ matrix which we assume uniformly elliptic. The spacelike UCP has the following formulation: let $\widetilde{\Omega}$ be any open subset in $\Omega$ and let $J\subset (0,T)$ an interval or a single point; we say that  $\mathcal{L}$ enjoys the spacelike UCP if
\begin{equation}\label{intr-3p}
\mathcal{L}(u)=0\mbox{ in }\Omega\times (0,T)\mbox{ and }u=0 \mbox{ on } \widetilde{\Omega}\times J\quad \mbox{ imply }\quad u=0 \mbox{ on }\Omega\times J.
\end{equation}
There exists a broad literature about property \eqref{intr-3p}, so that we mention here the most meaningful papers (in the authors' opinion) in this topic and we refer to the survey paper \cite{Ve4} for more extensive references.

The first result of spacelike UCP was proved in \cite{Nir} for $n=1$, $J$ an interval, and for the solutions to the equation $u_{xx}-u_{t}=a(x,t)u_x+b(x,t)u$, $a,b\in L^{\infty}$.  For $n\geq 1$, $J$ is a single point, and the matrix $A$ in class $C^2$ and independent of $t$, the spacelike UCP was proved in \cite{ItYa} (see also \cite{Yamabe}). If $A(x,t)$ belongs to $C^{\infty}$ and $J$ is an interval, the spacelike UCP was proved in \cite{Miz}; such a result is still valid if $u$ is a solution to $\tilde{\mathcal{L}}u=0$ where $\tilde{\mathcal{L}}$ is a first order perturbation of $\mathcal{L}$, namely  $\tilde{\mathcal{L}}(u)=\mathcal{L}(u)+W(x,t)\cdot\nabla u+ V(x,t)\,u$, with $W,V\in C^{\infty}$. The result in \cite{Miz} was substantially improved in \cite{SS} and in \cite{So}: in \cite{SS} the matrix  $A(x,t)$ belongs to $C^{1}$, $W\in L^{\infty}$, $V\in L^{\infty}$ and in  \cite{So}  $V\in L_{loc}^{\frac{n+2}{2}}$, $W\in L^{\infty}$, but again $A(x,t)\in C^{\infty}$. Lees and Protter,
(\cite{LP}, \cite{Pr}) proved uniqueness for the Cauchy problem for the equation $\widetilde{\mathcal{L}}(u)=0$, when $A\in C^2$. A stability estimate of H\"{o}lder type (far from initial and final times) for the Cauchy problem under the same hypotheses of \cite{LP}, \cite{Pr}, was proved (perhaps for the first time) in \cite{AmSh}, see also \cite{Am} and \cite{LRS}.
We mention the quite recent estimate of log type up to the initial and final time proved in \cite{CY}. We refer to \cite{EsVe}, \cite{Ve3} for additional improvements on the regularity of the leading coefficients.

The following strong unique continuation property (SUCP) for parabolic equations whose coefficients are smooth and \textit{time independent} was proved in \cite{LanO}: if $(x_0,t_0)\in \Omega\times (0,T)$ and $\mathcal{L}(u)=0$ in $\Omega\times(0,T)$, then
\begin{equation}\label{intr-4p}
u(x,t_0)=O\left(|x-x_0|^k\right) \quad \forall k\in \mathbb{N}  \quad\mbox{ implies } \quad u(\cdot,t_0)=0 \mbox{ on }\Omega,
\end{equation}
(see also \cite{Lin} for weaker assumptions on the regularity). Notice that \eqref{intr-4p} trivially implies \eqref{intr-3p}.

It is rather obvious that, for the validity of \eqref{intr-3p} for $n\geq 3$, the minimum of regularity required on $A(x,t)$ with respect to space variables should be the same as the corresponding elliptic UCP. For this reason we assume that $A(x,t)$ is Lipschitz continuous with respect to the parabolic distance, namely
\begin{equation}\label{lipgen-intr}
|A(x,t)-A(y,s)|\leq M\left(|x-y|^2+|t-s|\right)^{1/2} \mbox{ for every } (x,t), (y,s)\in \mathbb{R}^{n+1},
\end{equation}
 for some given constant $M$.

As a matter of fact, under assumption \eqref{lipgen-intr}, the SUCP holds true, see \cite{AlVe}, \cite{EsFeVe}, \cite{EsFe}, \cite{Fe}, \cite{KoTa}. Some quantitative versions of this result were proved in \cite{EsFeVe}. Nevertheless, to the authors' knowledge, there is no counterexample in literature that forbid a substantial reduction of the regularity of $A(x,t)$ with respect to $t$.

\bigskip

To get closer to the main theme of the present paper, we emphasize that whenever the matrix $A(x,t)$ has a jump discontinuity at a smooth enough interface QEUC is much more interesting then UCP. In order to make this point clear let us restrict for a moment to the elliptic case. Let us consider a symmetric $n\times n$ matrix $A(x)$, $x=(x',x_n)\in\mathbb{R}^n$, whose entries have a jump discontinuity at the interface $\Gamma=\{(x',0)|x'\in\mathbb{R}^{n-1} \}$
and are Lipschitz continuous on both the sides of $\Gamma$, that is on $\mathbb{R}^n_{\pm}=\{(x',x_n)|x_n\gtrless0 \}$. We assume that $A(x)$ is also uniformly elliptic. Denote by $B_r(\widetilde{x})$ the ball centered at $\widetilde{x}$ with radius $r>0$, $B^{\pm}_r(\widetilde{x})=B_r(\widetilde{x})\cap \mathbb{R}^n_{\pm}$ and let $u$ be a weak solution to
\begin{equation}\label{intr-2}
\dive(A(x)Du)=0, \mbox{ in } B_4(0)
\end{equation}
 which satisfies
\begin{equation}\label{intr-3}
u=0, \mbox{ in } B_{1/2}(e_n),
\end{equation}
then, by the unique continuation property we have $u_{+}\equiv 0$ (here $u_{\pm}=u_{|B^{\pm}_4(0)}$) and by the homogeneous transmission conditions on  $\Gamma$ we have
\begin{equation*}%\label{intr-4}
u_{-}(\cdot,0)=A_{-}(\cdot, 0)Du_{-}(\cdot,0)\cdot e_n=0 \mbox{ for } x\in \Gamma\cap B_4(0),
\end{equation*}
hence the uniqueness for Cauchy problem gives $u_{-}\equiv 0$. Now, broadly speaking, if we translate in a quantitative form the above procedure assuming that  $|u|\leq \varepsilon$  in  $B_{1/2}(e_n)$ instead of \eqref{intr-3}, and $|u|\leq 1$ in $B_{4}(0)$, we would obtain a logarithmic estimate of $u$ even in $B^{+}_r$ for $r<4$, see \cite{AV}. Clearly, this is a wrong way to perform  propagation of smallness across the interface because in this way we treat equation \eqref{intr-2} like two different equations, one in $B_4^+$ and the other one in  $B_4^-$, and the interface as part of the boundary on which only logarithmic estimates can be obtained \cite[Sect. 1.1]{ARRV}.

The right way to perform the smallness propagation estimate was provided for the first time, for isotropic coefficients (that is $A=aI$ for scalar $a$), in \cite[Sect. 3.1]{lr1} where a H\"older type propagation of  smallness   across the interface ("interpolation inequality" in the terminology of \cite{lr1}) was proved in the form of three-region inequality. This result was extended in \cite{flvw} to the case of general anisotropic and Lipschitz continuous matrix $A$. In both the papers \cite{lr1}, \cite{flvw} the three-region inequality was derived by Carleman estimates, see also \cite{Be-De-LR}, \cite{Be-De-Th} for some improvements of the results of \cite{lr1}. In \cite{ll} the Carleman estimate was proved for $A\in C^{\infty}$, whereas in \cite{dcflvw} it was proved when the matrix $A$ is Lipschitz continuous. We refer to \cite{Be-LR-2} for general results about elliptic transmission problems with complex coefficients across an interface.

Undoubtedly the investigation about Carleman estimates for the transmission problem was driven not only by its the intrinsic interest but also by the interest in the issue of exact controllability for parabolic equation and for inverse problems. Here we should mention the paper \cite{Do-Os-Pu} in which, perhaps for the first time, a Carleman estimate was proved in the parabolic context under the assumption that the leading coefficient it is independent on $t$ and it satisfies some monotonicity condition on the interface. Such a monotonicity condition was overcome in \cite{lr2}, as well as the time independence of coefficients, for $C^{\infty}$ isotropic coefficients. For some application to an inverse source parabolic transmission problem we refer to \cite{Be-Ya}.

\bigskip

The main effort in the present paper consists in proving a local Carleman estimate for the operator \eqref{intr-2p} when the matrix $A(x,t)$ have jumps at a flat interface orthogonal to the time direction and is Lipschitz continuous with respect to the parabolic distance on both the sides of the interface (see Theorem \ref{thm8.2} for the exact statement of the main result). Then in Section \ref{treregioni}, this Carleman estimate is applied to prove a H\"older type estimate for propagation of  smallness across the interface. It is easy to check that, by using standard  change of coordinates, this propagation of smallness continues to be true in the more general case of a $C^{1,1}$ interface whose normal vectors are never parallel to $t$ axis.

In order to prove our Carleman estimate (Theorem \ref{thm8.2}) we follow an approach similar to the one of \cite{dcflvw}. More precisely, in Section \ref{step1}, we consider the case in which the coefficients of the operator $\mathcal{L}$ depend only on the variable $x_n$ normal to the interface ($x_n$ is renamed $y$ in the rest of the paper). The simpler structure of operator  allows us, at a first stage, to prove a Carleman estimate (Theorem \ref{pr22}) with a weight function $\phi$ that is linear in all the space variables except the normal one. In this first step we perform the Fourier transform with respect to the tangential variables $x'$ and $t$ of the conjugate operator $e^{\tau\phi}\mathcal{L}(e^{-\tau\phi}\,\cdot\,)$ and, inspired by \cite{ll}, we factorize this conjugate operator  into two first order operators. By this approach we avoid the techniques of pseudodifferential operators (known to be "greedy" of regularity) used in \cite{ll}, and allows us to assume weak regularity on the leading matrix. Nevertheless, the Carleman estimate obtained in the first step, for the features of the level set of the weight function $\phi$, doesn't allow to find a smallness propagation estimate across the interface even in the most simple case of a constant matrix $A$. To get such a smallness propagation, say from an open set contained on one side of the interface to a set in the opposite side, we need a weight of type $-k|x'|^2+\phi$, $k>0$, or  the more comfortable  weight $-k|x'|^2-bt^2+\phi$, $k,b>0$. For this reason, in the second step of the proof of Carleman estimate (Section \ref{step2}), we consider an operator with general coefficients and a weight that is quadratic in $x^\prime$. In order to treat this general case we use a suitable partition of unity. Finally in the third step of the proof (Section \ref{step3}) we add in the weight the dependence on $t$.
Once the Carleman estimate has been proved, we show in Section \ref{treregioni} a three-region inequality. In the Appendix (Section \ref{appendice}) we prove a regularity result for the parabolic transmission problem that, although quite standard, is not present  in the literature (to the authors' knowledge).

\section{Notations and statement of the main theorem}\label{notstat}
\subsection{General notations and norms}
The functions we are interested in depend on $n$ space variables and one time variable. We denote the space variables as $(x,y)\in\RR^{n-1}\times\RR$ and the time variable as $t\in\RR$. We  assume the flat interface to be $\{y=0\}$.
Since the variable $y$, that is orthogonal to the interface, is the most important one, we use, instead of the usual notation $(x,y,t)$ the notation $(x,t,y)$. Also, the Fourier variables are denote by $(\xi,\eta,y)\in \RR^{n-1}\times\RR\times\RR$.
Sometimes we denote $X=(x,t)$  for $x\in\mathbb{R}^{n-1}$ and $t\in\mathbb{R}$. For sake of shortness we also use the notation $dX=dxdt$.

For any $r>0$ and $\widetilde{x}\in\mathbb{R}^{n-1}$, $\widetilde{t}\in\mathbb{R}$ we denote by $B_r(\widetilde{x})$ the ball centered at $\widetilde{x}$ with radius $r>0$ and we define $Q'_{r}(\widetilde{x})=\{x\in\mathbb{R}^{n-1}:|x_j-\widetilde{x}_j|\leq r,\,j=1,2,\cdots ,n-1\}$, $Q_{r}(\widetilde{X})=Q'_{r}(\widetilde{x})\times I_{r}(\widetilde{t})$, where $I_{r}(\widetilde{t})=[-r^2+\widetilde{t},r^2+\widetilde{t}]$.
Whenever $x=0$ we denote $B_r=B_r(0)$, $Q'_{r}=Q'_{r}(0)$ and $Q_{r}=Q_{r}(0)$.

By $H_{\pm}=\chi_{\RR^{n-1}\times\RR\times\RR_{\pm}}$ we denote the characteristic function of $\RR^{n-1}\times\RR\times\RR_{\pm}=\{(x,t,y)\in \RR^{n-1}\times\RR\times\RR\,|\,y\gtrless0\}$.

In places we use equivalently the symbols $D$, $\partial$
to denote the gradient of a function and we add the index $x$ or $y$
to denote the gradient in $\mathbb R^{n-1}$ and the derivative with respect to $y$,
respectively.

Let $u_\pm\in C^\infty(\RR^{n-1}\times\RR\times\RR)$; we define
\begin{equation*}%\label{1.030}
u=H_+u_++H_-u_-=\sum_\pm H_{\pm}u_{\pm},
\end{equation*}
where $\sum_\pm a_\pm=a_++a_-$.

For a function $f\in L^2(\mathbb{R}^{n})$, we define
\begin{equation*}%\label{2.13}
\mathcal{F}(f)=\hat{f}(\xi,\eta)=\int_{\mathbb{R}^{n}}f(x,t)e^{-ix\cdot\xi-it\eta}\,dx\,dt,\quad \xi\in \mathbb{R}^{n-1},\eta\in\RR.
\end{equation*}
Moreover we define (\cite{LiMa})
$$[f]_{\frac{1}{2},0,\mathbb{R}^{n}}=\left[\int_{\mathbb{R}}\int_{\mathbb{R}^{n-1}}\int_{\mathbb{R}^{n-1}}\frac{|f(x_1,t)-f(x_2,t)|^2}{|x_1-x_2|^n}dx_1dx_2dt\right]^{1/2},$$
and, for $\alpha\in[0,1)$,
$$[f]_{0,\alpha,\mathbb{R}^{n}}=\left[\int_{\mathbb{R}^{n-1}}\int_{\mathbb{R}}\int_{\mathbb{R}}\frac{|f(x,t_1)-f(x,t_2)|^2}
{|t_1-t_2|^{1+2\alpha}}dt_1dt_2dx\right]^{1/2}.$$

As usual we denote by $H^{\frac{1}{2},0}(\mathbb{R}^{n})$ and $H^{0,\alpha}(\mathbb{R}^{n}$ the spaces of the functions $f\in L^2(\mathbb{R}^{n})$ satisfying, respectively,
\[[f]_{\frac{1}{2},0,\mathbb{R}^{n}}<\infty,\]
and
\[[f]_{0,\alpha,\mathbb{R}^{n}}<\infty,\]
with the norms
\begin{equation}%\label{semR}
||f||_{H^{\frac{1}{2},0}(\mathbb{R}^{n})}=||f||_{L^2(\mathbb{R}^{n})}+[f]_{\frac{1}{2},0,\mathbb{R}^{n}}
\end{equation}
and
\begin{equation}%\label{semRt}
||f||_{H^{0,\alpha}(\mathbb{R}^{n})}=||f||_{L^2(\mathbb{R}^{n})}+[f]_{0,\alpha,\mathbb{R}^{n}}.
\end{equation}

We also use the notation $H^{\frac{1}{2},\alpha}(\mathbb{R}^{n})=H^{\frac{1}{2},0}(\mathbb{R}^{n})\cap H^{0,\alpha}(\mathbb{R}^{n})$ where we consider  the norm $||f||_{H^{\frac{1}{2},\alpha}(\mathbb{R}^{n})}=||f||_{L^2(\mathbb{R}^{n})}+[f]_{\frac{1}{2},\alpha,\mathbb{R}^{n}}$ where $[f]_{\frac{1}{2},\alpha,\mathbb{R}^{n}}=[f]_{\frac{1}{2},0,\mathbb{R}^{n}}+[f]_{0,\alpha,\mathbb{R}^{n}}$.

Recall that there exist $C_n$, depending only on $n$, and $C_{n,\alpha}$, depending only on $n$ and $\alpha$, such that

\begin{equation}\label{nor1}
C_n^{-1}\int_{\mathbb{R}^{n}}|\xi||\hat{f}(\xi,\eta)|^2d\xi d\eta\leq[f]^2_{\frac{1}{2},0,\mathbb{R}^{n}}\leq C_n\int_{\mathbb{R}^{n}}|\xi||\hat{f}(\xi,\eta)|^2d\xi d\eta,
\end{equation}

\begin{equation}\label{nor2}
C_{n,\alpha}^{-1}\int_{\mathbb{R}^{n}}|\eta|^{2\alpha}|\hat{f}(\xi,\eta)|^2d\xi d\eta\leq[f]^2_{0,\alpha,\mathbb{R}^{n}}\leq C_{n,\alpha}\int_{\mathbb{R}^{n}}|\eta|^{2\alpha}|\hat{f}(\xi,\eta)|^2d\xi d\eta,
\end{equation}
so that the norm $||f||_{H^{\frac{1}{2},\alpha}(\mathbb{R}^{n})}$ is equivalent to
$\left(\int_{\mathbb{R}^{n}}(1+|\eta|^{\alpha}+|\xi|)|\hat{f}(\xi,\eta)|^2d\xi d\eta\right)^{1/2}$.

Moreover,
\begin{equation}\label{disug-seminorme}
\begin{aligned}
&[D_xf]^2_{0,\frac{1}{4},\mathbb{R}^{n}}\leq C_{n,\frac{1}{4}}\int_{\mathbb{R}^{n}}|\eta|^{1/2}|\xi|^2|\hat{f}(\xi,\eta)|^2d\xi d\eta\\
\leq & C_{n,\frac{1}{4}}\int_{\mathbb{R}^{n}}(\frac{1}{3}|\eta|^{3/2}+\frac{2}{3}|\xi|^{3/2})|\hat{f}(\xi,\eta)|^2d\xi d\eta\leq C\left([f]^2_{0,\frac{3}{4},\mathbb{R}^{n}}+[D_xf]^2_{\frac{1}{2},0,\mathbb{R}^{n}}\right).
\end{aligned}
\end{equation}
\subsection{Differential operator, weight and trace operators}
Let us define
\begin{equation}\label{Lgen}
\mathcal{L}(u):=\sum_{\pm}H_{\pm}\left(-\der_t u_\pm+\dive_{x,y}(A_{\pm}(x,t,y)D_{x,y}u_{\pm})\right),
\end{equation}
where
\begin{equation}\label{Agen}
A_{\pm}(x,t,y)=\{a^{\pm}_{ij}(x,t,y)\}^n_{i,j=1},\quad x\in \mathbb{R}^{n-1},t\in \mathbb{R},y\in \mathbb{R}
\end{equation}
are Lipschitz symmetric matrix-valued functions with real entries that satisfy, for given constants $\lambda_0\in (0,1]$ and $M_0>0$,
\begin{equation}\label{ellgen}
\lambda_0|z|^2\leq A_{\pm}(x,t,y)z\cdot z\leq \lambda^{-1}_0|z|^2,\, \quad \forall (x,t,y)\in \mathbb{R}^{n+1},\,\forall\, z\in \mathbb{R}^n
\end{equation}
and
\begin{equation}\label{lipgen}
|A_{\pm}(x',t',y')-A_{\pm}(x,t,y)|\leq M_0\left(|x'-x|^2+|y'-y|^2+|t'-t|\right)^{1/2}.
\end{equation}

We also define
\begin{equation}\label{7.5}
h_0(u):=u_+(x,t,0)-u_-(x,t,0), \ \quad\quad  \forall (x,t)\in \mathbb{R}^{n}
\end{equation}
\begin{equation}\label{7.6}
h_1(u):=A_+(x,t,0)D_{x,y}u_+(x,t,0)\cdot \nu-A_-(x,t,0)D_{x,y}u_-(x,t,0)\cdot \nu,\ \quad \forall\, (x,t)\in \mathbb{R}^{n},
\end{equation}
where $\nu=-e_n$.

\begin{rem}
Notice that, if $A(x,t,y)=\sum_\pm A_\pm(x,t,y)$ and $u$ is a solution to the equation
\[\der_tu=\dive_{x,y}\left(A(x,t,y)\nabla u\right)\mbox{ in }\mathbb{R}^{n-1}\times\mathbb{R}\times\mathbb{R},\]
then
\begin{equation*}
\left\{\begin{array}{rcl}\mathcal{L}(u)&=&0 \mbox{ in }\mathbb{R}^{n-1}\times\mathbb{R}\times\mathbb{R},\\
h_0(u)&=&0\mbox{ in }\mathbb{R}^{n-1}\times\mathbb{R},\\
h_1(u)&=&0\mbox{ in }\mathbb{R}^{n-1}\times\mathbb{R}.\end{array}
\right.\end{equation*}
\end{rem}

Let us now introduce  the weight function. Let $\varphi$ be
\begin{equation}\label{2.1}
\varphi(y)=
\begin{cases}
\begin{array}{l}
\varphi_+(y):=\alpha_+y+\beta y^2/2,\quad y\geq 0,\\
\varphi_-(y):=\alpha_-y+\beta y^2/2,\quad y< 0,
\end{array}
\end{cases}
\end{equation}
where $\alpha_+$, $\alpha_-$ and $\beta$ are positive numbers which will be determined later. In what follows we denote by $\varphi_{+}$ and $\varphi_{-}$ the restriction of the weight function $\varphi$ to $[0,+\infty)$ and to $(-\infty,0)$ respectively. We use similar notation for any other weight functions. %For any $\varepsilon>0$ let $$\psi_{\varepsilon}(x,y):=\varphi(y)-\frac{\varepsilon}{2}|x|^2,$$ and l
Let, for $\delta>0$,
\begin{equation}\label{wei}
\phi_{\delta}(x,y):=\varphi(\delta^{-1}y)-\frac{|x|^2}{2\delta}.
\end{equation}

The more general weight function that we will use is of the form
\[
\Phi(x,t,y)=\phi_{\delta}(x,y)-\frac{b}{2}t^2
\]
for some positive $b$.

Let us introduce a notation for the  operators on the interface that appears in the Carleman estimates:
\begin{eqnarray}\label{BL}%\begin{aligned}
&Y^L(u;\Phi)=\sum_{\pm}\left\{\sum_{k=0}^1\tau^{3-2k}\int_{\mathbb{R}^{n}}|D_{x,y}^k\left({u}_{\pm}e^{\tau\Phi}\right)(x,t,0)|^2dX
+\tau^2[(e^{\tau\Phi}u_{\pm})(\cdot,\cdot,0)]^2_{\frac{1}{2},\frac{1}{4},\mathbb{R}^{n}}\right.\nonumber\\
&+\left.\vphantom{\sum_\pm}[(e^{\tau\Phi}u_{\pm})(\cdot,\cdot,0)]^2_{0,\frac{3}{4},\mathbb{R}^{n}}
+[\partial_y(e^{\tau\Phi_\pm}u_{\pm})(\cdot,\cdot,0)]^2_{\frac{1}{2},\frac{1}{4},\mathbb{R}^{n}}
+[D_x(e^{\tau\Phi_\pm}u_{\pm})(\cdot,\cdot,0)]^2_{\frac{1}{2},0,\mathbb{R}^{n}}\right\},
%\end{aligned}\end{equation}
\end{eqnarray}

\medskip

\begin{eqnarray}%\begin{equation}
\label{BR}%\begin{aligned}
Y^R(u;\Phi)=&[e^{\tau\Phi(\cdot,0)}h_1(u)]^2_{\frac{1}{2},\frac{1}{4},\mathbb{R}^{n}}
+[e^{\tau\Phi(\cdot,0)}h_0(u)]^2_{0,\frac{3}{4},\mathbb{R}^{n}}+[D_x(e^{\tau\Phi(\cdot,0)}h_0(u))]^2_{\frac{1}{2},0,\mathbb{R}^{n}}\nonumber\\
&\ds{+\frac{\tau^{3}}{\delta^3}\int_{\mathbb{R}^{n}}|h_0(u)|^2e^{2\tau\Phi(x,t,0)}dX
+\frac{\tau}{\delta}\int_{\mathbb{R}^{n}}|h_1(u)|^2e^{2\tau\Phi(x,t,0)}dX},
%\end{aligned}\end{equation}
\end{eqnarray}
where $h_0(u)$ and $h_1(u)$ are defined as in \eqref{7.5} and \eqref{7.6}.

We use the letters $C, C_0, C_1, \cdots$ to denote constants. The value of the constants may change from line to line, but it is always greater than $1$.

\bigskip
Let us now state our main result.
\begin{theo}\label{thm8.2}
Let $A_{\pm}(x,t,y)$ satisfy \eqref{Agen}-\eqref{lipgen}. There exist $\alpha_+$, $\alpha_-$, $\beta$, $b$, $\delta_0$, $r_0$, $\tau_0$ and $C$ depending on $\lambda_0, M_0$ such that if $\tau\geq \tau_0$, then
\begin{equation}\label{8.24}
\begin{aligned}
\sum_{\pm}&\left(\sum_{k=0}^2\tau^{3-2k}\int_{\mathbb{R}^{n+1}_{\pm}}|D_{x,y}^k{u}_{\pm}|^2e^{2\tau\Phi}dXdy+
\tau^{-1}%\sum_{\pm}
\int_{\mathbb{R}^{n+1}_{\pm}}|\partial_t{u}_{\pm}|^2e^{2\tau\Phi}dXdy\right)%\\
%&
+Y^L(u;\Phi)\\
&\leq C\left(\int_{\mathbb{R}^{n+1}}|\mathcal{L}(u)|^2\,e^{2\tau\Phi}dXdy+
Y^R(u;\Phi)\right).
\end{aligned}
\end{equation}
where $u=H_+u_++H_-u_-$,  $u_{\pm}\in C^\infty(\mathbb{R}^{n+1})$ and ${\rm supp}\, u\subset Q_{\delta_0/2}\times(-\delta_0 r_0,\delta_0 r_0)$, $\Phi(x,t,y)=\phi_{\delta_0}(x,y)-\frac{b}{2}t^2$ with $\phi_{\delta_0}$ given by \eqref{wei}, and $Y^L(u,\Phi)$ and $Y^R(u,\Phi)$ are defined in \eqref{BL} and \eqref{BR}, respectively.
\end{theo}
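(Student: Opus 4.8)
The plan is to follow the three-step strategy outlined in the introduction, building the full Carleman estimate from progressively simpler model cases and then reassembling via partition of unity and a final change of weight.

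\medskip

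\textbf{Step 1: the $y$-only coefficient case with linear tangential weight.} First I would reduce to an operator $\mathcal{L}$ whose coefficients $A_\pm$ depend only on $y$ and prove a Carleman estimate with weight $\varphi(\delta^{-1}y)$ linear in $x$ (Theorem \ref{pr22}). After conjugation $v_\pm=e^{\tau\phi}u_\pm$, I would take the Fourier transform in the tangential variables $(x,t)\mapsto(\xi,\eta)$, turning the conjugated operator into a family of second-order ODEs in $y$ with parameters $(\xi,\eta,\tau)$. Following \cite{ll}, I would factor each such operator as a product of two first-order operators $\partial_y - m_+$ and $\partial_y - m_-$ with symbols $m_\pm$ built from the elliptic data; crucially, because everything is now an ODE in one variable, one can bound each first-order factor by direct integration by parts in $y$, \emph{avoiding} pseudodifferential calculus and thereby tolerating merely Lipschitz regularity in $y$. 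The interface contributes boundary terms at $y=0$ which, after assembling the two factors and choosing $\alpha_+,\alpha_-,\beta$ so that the jump conditions on $\varphi'$ have the correct sign (this is where the positivity and the specific choice of weight parameters enters), are dominated by the trace quantities appearing in $Y^L$ and $Y^R$; the fractional Sobolev traces $[\cdot]_{1/2,1/4}$, $[\cdot]_{0,3/4}$ arise precisely from the $|\xi|$ and $|\eta|^{1/2}$ weights in Fourier space, cf.\ \eqref{nor1}, \eqref{nor2}, \eqref{disug-seminorme}.

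\medskip

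\textbf{Step 2: general coefficients, quadratic tangential weight.} Next I would pass to general $A_\pm(x,t,y)$ and the weight $\phi_\delta(x,y)=\varphi(\delta^{-1}y)-|x|^2/(2\delta)$ using a partition of unity: cover the small support $Q_{\delta_0/2}\times(-\delta_0 r_0,\delta_0 r_0)$ by finitely many cylinders of size comparable to $\delta$, freeze the coefficients (in $x,t$) at the center of each piece so that on each piece the operator differs from a Step-1 operator by a term of size $O(M_0\,\mathrm{diam})$ times first derivatives, apply Theorem \ref{pr22} on each piece, sum, and absorb the commutator errors (arising both from the cutoffs and from the coefficient freezing) into the left-hand side by taking $\delta\le\delta_0$ small and $\tau\ge\tau_0$ large. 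The quadratic term $-|x|^2/(2\delta)$ is convex in $x$ and hence only improves the bulk estimate; one has to check that its contribution to the interface terms is again controlled by $Y^L,Y^R$ (the $\delta$-powers in \eqref{BR} are tuned for exactly this). The novelty relative to the elliptic case \cite{dcflvw} is carrying the $\partial_t u$ term: it is a first-order term in the parabolic scaling, picks up the weight $\tau^{-1}$ on the left, and must be tracked through the freezing estimates since $A$ is only $1/2$-Hölder in $t$.

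\medskip

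\textbf{Step 3: adding the $t$-dependence in the weight, and the main obstacle.} Finally I would replace $\phi_{\delta_0}$ by $\Phi=\phi_{\delta_0}-\tfrac b2 t^2$. Since $\partial_t\Phi=-bt$ is bounded on the support and $\partial_t^2\Phi=-b$, conjugating by the extra factor $e^{-\tau b t^2/2}$ produces only lower-order corrections: a zeroth-order term $\tau^2 b^2 t^2$ and a term $\tau b$ from $\partial_t^2\Phi$, both absorbable for $b$ small and $\tau$ large using the already-established $\tau^3\int|u|^2$ term on the left. One must re-examine the interface terms, but since $\Phi$ restricted to $y=0$ only changes $\phi_{\delta_0}$ by the smooth factor $-\tfrac b2 t^2$, the trace seminorms transform boundedly and $Y^L(u;\Phi)$, $Y^R(u;\Phi)$ are defined with exactly this $\Phi$. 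I expect the genuine difficulty to lie in Step 1: getting the \emph{sign-definite} interface contribution after the factorization when the two one-sided weights $\varphi_+,\varphi_-$ have different slopes $\alpha_\pm$ at $y=0$, and simultaneously bounding all five distinct trace seminorms in $Y^L$ by the ODE energies uniformly in $(\xi,\eta)$ — in particular handling the regime $|\xi|$ small where the factorization degenerates, which is why the non-standard combined seminorm estimate \eqref{disug-seminorme} and the splitting of $|\eta|^{3/2}$ between $\frac13$ and $\frac23$ weights are needed. Once Step 1 is secured with clean constants, Steps 2 and 3 are perturbative.
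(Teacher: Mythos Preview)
Your proposal is correct and follows essentially the same three-step approach as the paper: Fourier transform and first-order factorization for $y$-only coefficients in Step~1 (with the case analysis governed by the ratio of $\tau$ to $\Lambda_0=(|\xi|^2+|\eta|)^{1/2}$), partition of unity with coefficient freezing and local linearization of the quadratic weight in Step~2, and the simple substitution $u=ve^{-\tau bt^2/2}$ into the Step~2 estimate for Step~3. One detail to sharpen: in Step~2 the partition scale must be $\tau$-dependent---the paper takes $\mu=(\varepsilon\tau)^{1/2}$ precisely so that $\tau(\psi_\varepsilon-\psi_{\varepsilon,g'})=O(1)$ on each piece (cf.\ \eqref{aggiunta}, \eqref{7.17})---whereas your phrase ``cylinders of size comparable to $\delta$'' with fixed $\delta\le\delta_0$ would make the weight-linearization error grow with $\tau$ and prevent absorption.
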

\begin{rem}\label{bassi}
It is clear that estimate \eqref{8.24} remains true for large enough $\tau_0$ if the operator $\mathcal{L}$ is substituted by
\[\tilde{\mathcal{L}}(u)=\mathcal{L}(u) + W\cdot \sum_\pm D_{x,y}u_\pm + Vu,\]
for bounded functions $W$ and $V$.
\end{rem}
We divide the proof of Theorem \ref{thm8.2} in 3 main steps. In Step 1, we consider a leading coefficient $A_\pm$ depending only on $y$ and a weight function linear in $x$ and independent of $t$. In Step 2, we take a general leading coefficient and a weight quadratic in $x$ but independent of $t$. In Step 3 we add the dependence on $t$ of the weight function.

\section{Step 1: leading coefficient depending on $y$ only}\label{step1}
In this section we consider the simple case of the leading matrices \eqref{Agen} independent of $x$ and $t$. Moreover, the weight function that we consider is linear with respect to $x$ variable, so that, as explained above, the Carleman estimates we get here are only preliminary to the one that we will get in the general case.

Assume that
\begin{equation*}%\label{1.1}
A_{\pm}(y)=\{a^{\pm}_{ij}(y)\}^n_{i,j=1}
\end{equation*}
are symmetric matrix-valued functions satisfying \eqref{ellgen} and \eqref{lipgen}, i.e.,
\begin{equation}\label{1.2}
\lambda_0|z|^2\leq A_{\pm}(y)z\cdot z\leq \lambda^{-1}_0|z|^2,\, \forall y\in \RR,\,\forall\, z\in \RR^n,
\end{equation}
\begin{equation}\label{1.3}
|A_{\pm}(y')-A_{\pm}(y'')|\leq M_0|y'-y''|, \quad\forall\, y',y''\in \RR.
\end{equation}
From \eqref{1.2}, we have
\begin{equation*}%\label{Np5}
a_{nn}^{\pm}(y)\geq\lambda_0\quad\forall\, y\in\RR.
\end{equation*}

In the present case the differential operator \eqref{Lgen} becomes
\begin{equation}\label{1.6}
\mathcal{L}(u):=\sum_{\pm}H_{\pm}\left(-\der_tu_\pm+{\rm div}_{x,y}(A_{\pm}(y)D_{x,y}u_{\pm})\right),
\end{equation}
and \eqref{7.6} becomes
\begin{equation*}%\label{7.6}
h_1(w):=A_+(0)D_{x,y}w_+(x,t,0)\cdot \nu-A_-(0)D_{x,y}w_-(x,t,0)\cdot \nu,\ \forall\, (x,t)\in \mathbb{R}^{n},
\end{equation*}
where $u=\sum_\pm H_{\pm}u_{\pm}$, $u_\pm\in C^\infty(\RR^{n+1})$ and $\nu=-e_n$.

We also set, for any $s\in[0,1]$ and $\gamma\in\mathbb{R}^{n-1}$ with $|\gamma|\le 1$
\begin{equation}\label{2.2}
\phi(x,y)=\varphi(y)+ s\gamma\cdot x=H_+\phi_++H_-\phi_-,
\end{equation}
where $\varphi$ is defined in \eqref{2.1}. Notice that this weight function is linear in $x$, hence this is not a special case of \eqref{wei}.

Our aim in this step is to prove:
\begin{theo}\label{pr22}
Let $\mathcal{L}$ be the operator \eqref{1.6}. There exist $\alpha_\pm$, $\beta$, $\tau_0$, $s_0$, $r_0$ and $C$ depending only on $\lambda_0$, $M_0$, such that for $\tau\geq\tau_0$, $0<s\le s_0<1$,
and for every  $w=\sum_{\pm}H_\pm w_\pm$ with ${\rm supp}\, w\subset Q_1\times [-r_0,r_0]$, we have that
\begin{equation}\label{5.16}
\begin{aligned}
\sum_{\pm}&\left(\sum_{k=0}^2\tau^{3-2k}\int_{\mathbb{R}^{n+1}_{\pm}}|D_{x,y}^k{w}_{\pm}|^2e^{2\tau\phi}dXdy+\tau^{-1}\int_{\mathbb{R}^{n+1}_{\pm}}|\der_t{w}_{\pm}|^2e^{2\tau\phi}dXdy\right)%\\&
+Y^L(w;\phi)\\
&\leq C\left(\int_{\mathbb{R}^{n+1}}|\mathcal{L}(w)|^2e^{2\tau\phi}dXdy
+Y^R(w;\phi)\right),
\end{aligned}
\end{equation}
where $\phi(x,y)$ is given by \eqref{2.2} and $Y^L$ and $Y^R$ where defined in \eqref{BL} and \eqref{BR}, respectively.
%where
%\begin{equation*}%\label{7.6}
%(\overline{h}_1(w))(x,t):=A_+(0)D_{x,y}w_+(x,t,0)\cdot \nu-A_-(0)D_{x,y}w_-(x,t,0)\cdot \nu,\ \forall\, (x,t)\in \mathbb{R}^{n},
%\end{equation*}
%and $\nu=-e_n$.
\end{theo}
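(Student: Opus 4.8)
The plan is to reduce the transmission problem, via Fourier transform in the tangential variables $X=(x,t)$, to a family of one-dimensional problems in $y$ parametrised by the dual variables $(\xi,\eta)$, and to prove a pointwise (in $(\xi,\eta)$) Carleman-type inequality for the resulting ordinary differential operators that can then be integrated back. Since the coefficients $A_\pm$ depend only on $y$, the conjugated operator $\mathcal{L}_{\tau}(v_\pm):=e^{\tau\phi}\mathcal{L}(e^{-\tau\phi}v_\pm)$ has coefficients independent of $X$ (the only $X$-dependence in $\phi$ is the linear term $s\gamma\cdot x$, which after conjugation produces bounded shifts of the tangential frequency), so Fourier transforming in $X$ turns each of the two bulk equations into a second order ODE in $y$ of the form $a_{nn}^\pm(y)\,(\partial_y - \tau\varphi_\pm')^2 \hat v_\pm + (\text{lower order in }\partial_y) + (\text{symbol terms})\,\hat v_\pm = \widehat{e^{\tau\phi}\mathcal{L}(u)}_\pm$. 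First I would carry out this conjugation and Fourier reduction carefully, tracking the extra first order terms coming from $\mathrm{div}_{x,y}$ acting on the Lipschitz coefficients and from the cross terms $a_{in}^\pm$.

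The heart of Step 1, following \cite{ll} but avoiding pseudodifferential machinery, is to \emph{factor} the principal part of this ODE operator into two first order factors $(\partial_y - \tau\varphi_\pm' + m_\pm^{+}(y,\xi,\eta,\tau))(\partial_y - \tau\varphi_\pm' + m_\pm^{-}(y,\xi,\eta,\tau))$, where $m_\pm^{\mp}$ are roots of the associated quadratic symbol. Because $A_\pm$ is only Lipschitz, the symbols $m_\pm^{\pm}$ are only Lipschitz in $y$, so I would keep the factorisation exact and absorb the commutator $[\partial_y,m_\pm^{\mp}]$ (a bounded measure, morally $O(\tau)$ after rescaling) into the error terms; this is exactly the point where low regularity is affordable here, in contrast to the $C^\infty$ hypothesis in \cite{ll}. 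With $\beta$ chosen large, $\varphi_\pm''=\beta>0$ gives the usual convexity gain, and one obtains for each frequency a first order Carleman estimate for each factor, with boundary contributions at $y=0$ (traces of $\hat v_\pm$ and $\partial_y\hat v_\pm$). Summing the two factor estimates and using that $\alpha_+,\alpha_-$ can be tuned so that $\varphi$ has the "good sign" of its jump at the interface relative to the conormal directions, I would get a bulk estimate plus \emph{positive} boundary terms on the left, at the price of boundary terms that must be controlled.

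The transmission conditions enter when matching the two half-line estimates across $y=0$: the jumps $h_0(u)$, $h_1(u)$ are precisely $\hat v_+ (\cdot,0) - \hat v_-(\cdot,0)$ (up to the conjugation factor, since $\varphi_\pm(0)=0$) and a combination of $\partial_y\hat v_\pm(\cdot,0)$ and tangential-frequency terms. The key algebraic lemma is that the $2\times 2$ system relating the interior traces $(\hat v_\pm,\partial_y\hat v_\pm)|_{y=0}$ to $(h_0,h_1)$ is invertible with bounds uniform in $(\xi,\eta)$ after the weights are inserted, with a favorable sign — this is what forces the specific choice of $\alpha_\pm$ and is the genuine "interface" content, analogous to the Lopatinskii-type condition in \cite{dcflvw}; I would isolate it as a separate claim. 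Once the traces are controlled by $Y^R$ plus a small multiple of the bulk, Plancherel converts the frequency-wise estimates into the $L^2$ and fractional Sobolev norms appearing in $Y^L$ and in the left side of \eqref{5.16}, the $|\xi|$ and $|\eta|^{2\alpha}$ weights producing exactly the $H^{1/2,0}$, $H^{0,3/4}$, $H^{1/2,1/4}$ seminorms via \eqref{nor1}, \eqref{nor2}, \eqref{disug-seminorme}. The support condition $\mathrm{supp}\,w\subset Q_1\times[-r_0,r_0]$ is used to make $r_0$ small enough that the Lipschitz perturbation of the frozen-coefficient symbols, and the linearity defect of the weight in $x$, are absorbed; the smallness $s\le s_0$ plays the same role for the $s\gamma\cdot x$ term.

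I expect the main obstacle to be the \emph{trace/interface bookkeeping at low frequency combined with only Lipschitz regularity}: when $|\xi|^2+|\eta|$ is comparable to $\tau$ or smaller, the two symbol roots $m_\pm^{+}$, $m_\pm^{-}$ are not well separated, the factorisation degenerates, and one must argue more crudely (integrating by parts directly in the second order ODE with the convexity of $\varphi$), while still producing boundary terms with the correct sign to feed the interface matching; making the constants in this regime independent of $(\xi,\eta)$ and consistent with the high-frequency regime, and checking that the Lipschitz commutators really are lower order uniformly, is the delicate part. The rest — summing $\pm$, reinstating the $t$-weight is deferred to later steps — is bookkeeping with Plancherel and the seminorm inequalities already recorded in Section \ref{notstat}.
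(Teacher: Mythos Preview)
Your strategy is essentially the paper's own: conjugate, Fourier transform in $(x,t)$, factor $P_{s,\pm}$ into the first order operators $E_{s,\pm}$, $F_{s,\pm}$ (equations \eqref{3.12}--\eqref{3.13}), prove a pointwise-in-$(\xi,\eta)$ estimate (Proposition~\ref{prop3.1}), and integrate using \eqref{nor1}, \eqref{nor2}. The interface matching you describe as a ``$2\times2$ Lopatinskii-type'' invertibility is precisely what the paper isolates in Lemma~\ref{lem3.12}, where the key point is the lower bound $\Re\bigl(a_{nn}^+(0)\sqrt{\zeta_{s,+}}+a_{nn}^-(0)\sqrt{\zeta_{s,-}}\bigr)\gtrsim\Lambda_1$ together with the choice $\alpha_+/\alpha_->L$ (see \eqref{L}, \eqref{ipalpha}, \eqref{4.101}).

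One point to correct in your diagnosis of the hard regime: the difficulty when $\tau\lesssim\Lambda_0$ is \emph{not} that the roots $\pm\sqrt{\zeta_{s,\pm}}$ coalesce. On the contrary, Lemma~\ref{above} gives $R_{s,\pm}\geq C^{-1}\Lambda_1$ there, so the factorisation remains nondegenerate and is used throughout. The actual issue is the \emph{sign} of the boundary contribution $\tau\alpha_+-R_{s,+}(\xi,\eta,0)$ arising from $F_{s,+}$: in the first two cases this is positive (so the $+$ side alone already controls its traces, Lemmas~\ref{lem4.2}--\ref{lem4.3} and \ref{lem3.5}--\ref{lem3.7}), while in the third case it is negative and the trace control must come from the interface coupling (Lemmas~\ref{lem3.8}--\ref{lem3.12}), with the condition $\alpha_+/\alpha_->L$ guaranteeing that $R_{0,-}(\xi,\eta,0)-\tau\alpha_-$ stays bounded below by $c\,\Lambda_0$ (see \eqref{4.101}). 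Also, since $A_\pm$ is Lipschitz in $y$, $\partial_y\sqrt{\zeta_{s,\pm}}$ is an honest $L^\infty$ function bounded by $C\Lambda_0$ (Lemma~\ref{above}), not merely a bounded measure; this is what makes the factorisation error \eqref{3.12}--\eqref{3.13} directly absorbable. With these adjustments your outline goes through exactly as in the paper.
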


\subsection{Fourier transform of the conjugate operator and its factorization}\label{Fourier}
To proceed further, we introduce some operators and find their properties. We use the notation $\partial_j=\partial_{x_j}$ for $1\leq j\leq n-1$.

Let us define
\begin{equation}\label{bnuovi}
b_{jk}^{\pm}(y)=\frac{1}{a_{nn}^\pm}\left(a_{jk}^{\pm}(y)-\frac{a_{nj}^{\pm}(y)a_{nk}^{\pm}(y)}{a_{nn}^{\pm}(y)}\right),\quad j,k=1,\cdots,n-1.
\end{equation}
and set
\[B_{\pm}(y)=\{b^{\pm}_{jk}(y)\}^{n-1}_{j,k=1}.\]
Let us define the operator
\begin{equation*}%\label{1.7}
\widetilde{T}_{\pm}(y,\partial_x)u_{\pm}=\sum_{j=1}^{n-1}\frac{a_{nj}^{\pm}(y)}{a_{nn}^{\pm}(y)}\,\partial_ju_{\pm}.
\end{equation*}

In view of \eqref{1.2} we have
\begin{equation}\label{1.15}
\lambda_1|z'|^2\leq B_{\pm}(y)z'\cdot z'\leq \lambda^{-1}_1|z'|^2,\, \quad\forall\, y\in \RR,\, \forall\, z'\in \RR^{n-1},
\end{equation}
\begin{equation}\label{1.15bis}
\left|B_\pm(y^\prime)-B_\pm(y^{\prime\prime})\right|\leq M_1|y^\prime-y^{\prime\prime}|,\,\forall y^\prime,\,y^{\prime\prime}\in\RR,
\end{equation}
where $\lambda_1\leq\lambda_0$  depends only on $\lambda_0$, and $M_1$ depends on $\lambda_0$ and $M_0$ only.

It is easy to show, by direct calculations (\cite{ll}), that
\begin{equation*}%\label{1.9}
{\rm div}_{x,y}\big(A_{\pm}(y)D_{x,y}u_{\pm}\big)=(\partial_y+\widetilde{T}_{\pm})a_{nn}^{\pm}(y)(\partial_y+\widetilde{T}_{\pm})u_{\pm}+a_{nn}^\pm(y)\,{\rm div}_{x}\big(B_{\pm}(y)D_{x}u_{\pm}\big).
\end{equation*}

 In order to derive the Carleman estimate \eqref{5.16} we  conjugate the operator $\mathcal{L}$ with $e^{\tau\phi}$ for $\phi$ given by \eqref{2.2} and get (see \cite{dcflvw} for further details)

\begin{eqnarray}\label{2.9}
&&e^{\tau\phi}\mathcal{L}(y,\partial)(e^{-\tau\phi}v)=\sum_{\pm}H_{\pm}\left\{\vphantom{\sum_{j,k=1}^{n-1}}-\der_tv_\pm\right.\nonumber\\
&&\quad+%\sum_{\pm}H_{\pm}\big[
\left(\partial_y-\tau\varphi_{\pm}'+\widetilde{T}_{\pm}(y,\partial_x-\tau s\gamma)\right)a_{nn}^{\pm}(y)\left(\partial_y-\tau\varphi_{\pm}'+\widetilde{T}_{\pm}(y,\partial_x-\tau s\gamma)\right)v_{\pm}\nonumber\\
&&\quad+\left.%\sum_{\pm}H_{\pm}\big[
a_{nn}^\pm(y)\left(\sum_{j,k=1}^{n-1}b_{jk}^{\pm}(y)\partial^2_{jk}v_{\pm}-2s\tau\sum_{j,k=1}^{n-1}b_{jk}^{\pm}(y)\partial_{j}v_{\pm}\gamma_k+s^2\tau^2\sum_{j,k=1}^{n-1}b_{jk}^{\pm}(y)\gamma_j\gamma_kv_{\pm}\right)\right\}.
\end{eqnarray}

Now, we  focus on the analysis of $e^{\tau\phi}\mathcal{L}(e^{-\tau\phi}v)$ and introduce some notations:

%\begin{equation}\label{2.10}
%f(x,t,y)=e^{\tau\phi}\mathcal{L}(y,\partial)(e^{-\tau\phi}v),
%\end{equation}

\begin{equation*}%\label{2.11}
\B_{\pm}(\xi,\gamma,y)=\sum_{j,k=1}^{n-1}b_{jk}^{\pm}(y)\xi_j\gamma_k,\quad \xi\in \mathbb{R}^{n-1},
\end{equation*}
for $b_{jk}^{\pm}$ as in \eqref{bnuovi},
\begin{equation}\label{2.14}
\zeta_{s,\pm}(\xi,\eta,y)=\B_{\pm}(\xi,\xi,y)-s^2\tau^2\B_{\pm}(\gamma,\gamma,y)+i\left(2s\tau \B_{\pm}(\xi,\gamma,y)+\frac{\eta}{a_{nn}^\pm(y)}\right),
\end{equation}
\begin{equation*}%\label{R}
	R_{s,\pm}(\xi,\eta,y)=\Re\sqrt{\zeta_{s,\pm}(\xi,\eta,y)},\quad J_{s,\pm}(\xi,\eta,y)=\Im\sqrt{\zeta_{s,\pm}(\xi,\eta,y)}
\end{equation*}
\begin{equation}\label{L}
	L=\sup	\frac{R_{0,+}(\xi,\eta,0)}{R_{0,-}(\xi,\eta,0)}
\end{equation}
and
\begin{equation}\label{2.12}
T_{\pm}(\xi,y)=\sum_{j=1}^{n-1}\frac{a_{nj}^{\pm}(y)}{a_{nn}^{\pm}(y)}\xi_j.
\end{equation}
By \eqref{2.9}, we have
\begin{equation*}%\label{2.15}
%\begin{aligned}
%&
\mathcal{F}\left(e^{\tau\phi}\mathcal{L}(e^{-\tau\phi}v)\right)=\sum_{\pm}H_{\pm}P_{s,\pm}\hat{v}_{\pm},
%\end{aligned}
\end{equation*}
where $\mathcal{F}(\cdot)$ is the Fourier transform with respect to $(x,t)$ and
\begin{equation*}%\label{2.17}
\begin{aligned}
P_{s,\pm}\hat{v}_\pm:=&\big(\partial_y-\tau\varphi_{\pm}^\prime+iT_{\pm}(\xi+i\tau s\gamma,y)\big)a_{nn}^{\pm}(y)\big(\partial_y-\tau\varphi_{\pm}^\prime+iT_{\pm}(\xi+i\tau s\gamma,y)\big)\hat{v}_{\pm}\\
&-a_{nn}^{\pm}(y)\zeta_{s,\pm}(\xi,\eta,y)\hat{v}_{\pm}.
\end{aligned}
\end{equation*}

Our aim is to estimate $\mathcal{F}\left(e^{\tau\phi}\mathcal{L}(e^{-\tau\phi}v)\right)$ from below.  %$f(x,t,y)$ or, equivalently, its Fourier transform  $\hat{f}(\xi,\eta,y)$.
In order to do this, we want to factorize the operators $P_{s,\pm}$.

For any $z=a+ib$ with $(a,b)\neq(0,0)$, we define the square root of $z$,
$$\sqrt{z}=\sqrt{\frac{a+\sqrt{a^2+b^2}}{2}}+i\frac{b}{\sqrt{2(a+\sqrt{a^2+b^2})}}.$$
It should be noted that $\Re \sqrt{z}\geq 0$.

Let us set
%\begin{equation}\label{theta0}
%\theta_0(x,t)=v_+(x,t,0)-v_-(x,t,0),
%\end{equation}
%Qui forse potremmo pensare una notazione che metta meglio in evidenza che $\theta_0$ e $\theta_1$ sono operatori. Ad esempio $\theta_0v(x,t)$ o qualcosa di analogo.
%\begin{equation}\label{theta1}
%\theta_1(x,t)=A_+(0)D_{x,y}v_+(x,t,0)\cdot\nu-A_-(0)D_{x,y}v_-(x,t,0)\cdot\nu\quad  \mbox{ for }  x\in \mathbb{R}^{n-1},
%\end{equation}

\begin{equation}\label{sigma0}
\sigma_0(\xi,\eta)=\mathcal{F}\left(e^{\tau\phi(\cdot,0)}h_0(e^{-\tau\phi(\cdot,0)}v)\right)=\hat{v}_+(\xi,\eta,0)-\hat{v}_-(\xi,\eta,0),
\end{equation}
and
\begin{equation}\label{sigma1}
\sigma_1(\xi,\eta)=\mathcal{F}\left(-e^{\tau\phi(\cdot,0)}h_1(e^{-\tau\phi(\cdot,0)}v)\right)=V_+(\xi,\eta)-V_-(\xi,\eta),
\end{equation}
where
\begin{equation}\label{Vpm}
V_\pm(\xi,\eta)=a_{nn}^{\pm}(0)\left[\der_y\hat{v}_{\pm}(\xi,\eta,0)-\tau\alpha_\pm\hat{v}_\pm(\xi,\eta,0)+iT_\pm(\xi+i\tau s\gamma,0)\hat{v}_\pm(\xi,\eta,0)\right].
\end{equation}

We denote by
\begin{equation*}%\label{lambda0}
	\Lambda_0=(|\xi|^2+|\eta|)^{1/2}
\end{equation*}
and
\begin{equation*}%\label{lambda1}
	\Lambda_1=(|\xi|^2+|\eta|+\tau^2)^{1/2}
\end{equation*}

Notice that, since $|\gamma|\leq 1$ and $0\leq s\leq 1$, we have:
\begin{equation}\label{1-15bis}
	|\zeta_{s,\pm}|\leq C\Lambda_1^2,
\end{equation}
\begin{equation}\label{2-15bis}
	|R_{s,\pm}|, |J_{s,\pm}|\leq C\Lambda_1,
\end{equation}
\begin{equation}\label{derivt}
	|T_{\pm}(y,\xi+i\tau s\gamma)|\leq C(|\xi|+s\tau),
\end{equation}
where $C$ depends only on  $\lambda_0$.

Moreover, for $s=0$ we have,
\begin{equation}\label{stimeRzer}
	\frac{\lambda_0^{1/2}\lambda_1^{1/2}\Lambda_0}{2}\leq R_{0,\pm}\leq\lambda_0^{-1/2}\lambda_1^{-1/2}\Lambda_0.
\end{equation}

We always assume that the constants $\alpha_+$ and $\alpha_-$ in the weight \eqref{2.1} are fixed in such a way that
\begin{equation}\label{ipalpha}
	\frac{\alpha_+}{\alpha_-}>L,
\end{equation}
where $L$ was given in \eqref{L}.

\begin{pr}\label{prop3.1}
There exist $\tau_0$, $\overline{s}$, $\rho$, $\beta$ and $C$, depending only on $\lambda_0$ and $M_0$ such that for $\tau\geq\tau_0$, ${\rm supp}\, \hat{v}_{\pm}(\xi,\cdot)\subset [-\rho,\rho]$, $s\leq \overline{s}<1$ we have:
\begin{eqnarray}\label{4.4}
&&\frac{1}{\tau}\sum_{\pm}||\partial^2_y\hat{v}_{\pm}(\xi,\eta,\cdot)||^2_{L^2(\mathbb{R}_\pm)}
+\frac{\Lambda_1^2}{\tau}\sum_{\pm}||\partial_y\hat{v}_{\pm}(\xi,\eta,\cdot)||^2_{L^2(\mathbb{R}_\pm)}
\nonumber\\
&&\quad+\frac{\Lambda_1^4}{\tau}\sum_{\pm}||\hat{v}_{\pm}(\xi,\eta,\cdot)||^2_{L^2(\mathbb{R}_\pm)}+\Lambda_1\sum_{\pm}|V_{\pm}(\xi,\eta)|^2+\Lambda_1^3\sum_{\pm}|\hat{v}_{\pm}(\xi,\eta,0)|^2\nonumber\\
 &&\quad\leq C\left(\sum_{\pm}||P_{s,\pm}\hat{v}_{\pm}(\xi,\eta,\cdot)||^2_{L^2(\mathbb{R}_\pm)}+\Lambda_1|\sigma_1(\xi,\eta)|^2+\Lambda_1^3|\sigma_0(\xi,\eta)|^2\right).
\end{eqnarray}

\end{pr}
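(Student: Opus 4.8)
The proof of Proposition~\ref{prop3.1} is an ODE estimate in the variable $y$ for the family of operators $P_{s,\pm}$, parametrised by the dual variables $(\xi,\eta)$ and by $\tau$. The starting point is the factorization of the (symbol of the) conjugate operator. Writing $a_{nn}^{\pm}(y)$ out and using the square-root function introduced above, one factors, up to a lower-order remainder controlled by the Lipschitz bound \eqref{1.3} (equivalently \eqref{1.15bis}),
\[
P_{s,\pm}\hat v_\pm = a_{nn}^\pm(y)\bigl(\partial_y - \tau\varphi_\pm' + iT_\pm + \sqrt{\zeta_{s,\pm}}\,\bigr)\bigl(\partial_y - \tau\varphi_\pm' + iT_\pm - \sqrt{\zeta_{s,\pm}}\,\bigr)\hat v_\pm + (\text{Lip. remainder}),
\]
so that on each half-line $\RR_\pm$ one has two first-order operators whose real parts of the ``characteristic roots'' are $-\tau\varphi_\pm' \pm R_{s,\pm}$. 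The key sign facts are: $\varphi_\pm' = \alpha_\pm + \beta y \geq \alpha_\pm/2 > 0$ for $|y|\le r_0$ with $r_0$ small depending on $\beta$, and $R_{s,\pm}\geq 0$ with the two-sided bound \eqref{stimeRzer} at $s=0$, perturbed by \eqref{2-15bis} for $s\le\bar s$ small. I would first treat each factor separately: for a first-order operator $(\partial_y - m(y))g = f$ on a half-line with $\Re m$ of a definite sign and of size comparable to $\Lambda_1$, an integration-by-parts / energy identity gives $L^2(\RR_\pm)$ control of $g$ and of the relevant boundary term at $y=0$ in terms of $\|f\|_{L^2(\RR_\pm)}$, with a gain of $\tau^{-1/2}$ coming from $\varphi_\pm'\sim\tau$. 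Composing the two factors yields the bulk terms $\tau^{-1}\|\partial_y^2\hat v_\pm\|^2$, $\Lambda_1^2\tau^{-1}\|\partial_y\hat v_\pm\|^2$, $\Lambda_1^4\tau^{-1}\|\hat v_\pm\|^2$ together with some interface quantities.

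\emph{The interface coupling.} The genuinely new ingredient — and where I expect the main difficulty — is that the two half-line problems are \emph{not} independent: they are linked through the transmission data $\sigma_0 = \hat v_+(0)-\hat v_-(0)$ and $\sigma_1 = V_+(\xi,\eta)-V_-(\xi,\eta)$, while the estimate must produce $\sum_\pm\bigl(\Lambda_1|V_\pm|^2 + \Lambda_1^3|\hat v_\pm(0)|^2\bigr)$ on the left, i.e. each trace individually, not just the jumps. This is exactly the place where the hypothesis \eqref{ipalpha}, $\alpha_+/\alpha_- > L$, is indispensable. The strategy is to run the single-half-line energy identities keeping the boundary terms at $y=0$ explicit; one obtains, schematically, quantities like $R_{s,+}|\hat v_+(0)|^2$ and $R_{s,-}|\hat v_-(0)|^2$ with the ``right'' sign on $\RR_+$ but potentially the ``wrong'' sign on $\RR_-$ (because on $y<0$ the decaying/growing roles of the exponentials are swapped). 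One then expresses the $\RR_-$ boundary terms via the jumps $\sigma_0,\sigma_1$ and the $\RR_+$ boundary terms, and the algebraic inequality that closes the argument is precisely: a positive-definite quadratic form in $(\hat v_+(0), V_+)$ minus a small negative part in $(\hat v_-(0),V_-)$ is still positive-definite after substituting $\hat v_-(0) = \hat v_+(0)-\sigma_0$, $V_- = V_+ - \sigma_1$, \emph{provided} the ratio of the diagonal coefficients — which is controlled by $\alpha_+/\alpha_-$ and by $\sup R_{0,+}/R_{0,-} = L$ — exceeds $1$. This is the transmission analogue of the Lopatinski–Shapiro / sign condition at the interface, and it is the heart of the matter.

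\emph{Assembling and absorbing errors.} Having the two factorized first-order estimates and the closed boundary inequality, I would: (i) add the $\RR_+$ and $\RR_-$ contributions; (ii) move the Lipschitz remainder terms ($\lesssim M_1\int|\partial_y\hat v_\pm|\,|\hat v_\pm|$ type, plus $M_1$-errors in the boundary terms) to the left-hand side and absorb them by choosing $\tau_0$ large — the bulk gains $\tau^{-1}$ versus the remainder gains only $1$, so this costs a fixed factor; (iii) similarly absorb the $-\partial_t$ term, whose Fourier symbol $\eta$ is already inside $\zeta_{s,\pm}$ and hence inside $R_{s,\pm}, J_{s,\pm}$ — one checks $|\eta|\le C\Lambda_1^2$ and it is subsumed; (iv) use \eqref{derivt} to control the $iT_\pm(\xi+i\tau s\gamma)$ terms, which for $s\le\bar s$ small contribute at most a small multiple of the good terms; (v) choose $\beta$ (hence $r_0$) so that $\varphi_\pm'>0$ on $\mathrm{supp}\,\hat v_\pm$, and $\rho$ so the $y$-support constraint is consistent. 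One also needs $\bar s$ small enough that the $s$-dependent perturbations of $R_{s,\pm}$ in \eqref{2-15bis} do not destroy the two-sided bound \eqref{stimeRzer} nor the ratio condition. The output is exactly \eqref{4.4}, with the $|\sigma_0|^2, |\sigma_1|^2$ terms on the right carrying the powers $\Lambda_1^3$ and $\Lambda_1$ dictated by the boundary substitution. The main obstacle, to repeat, is step (ii)–(iii) of this paragraph combined with the boundary inequality of the previous one: making the sign condition \eqref{ipalpha} do its job uniformly in $(\xi,\eta)$ while all the Lipschitz, $s$-, and $\partial_t$-errors are genuinely absorbed rather than merely formally small.
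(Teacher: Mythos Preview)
Your overall strategy---factorize $P_{s,\pm}$ into two first-order operators, run integration-by-parts energy identities on each half-line, and close the interface coupling via the transmission quantities $\sigma_0,\sigma_1$ using the condition $\alpha_+/\alpha_->L$---is exactly the paper's. But you are missing the central structural element of the proof: a \emph{three-regime case split} in the ratio $\tau/\Lambda_0$. Concretely, the paper distinguishes (i) $\tau\ge \Lambda_0/s_0$, (ii) $R_{0,+}(\xi,\eta,0)/((1-\kappa)\alpha_+)\le\tau\le\Lambda_0/s_0$, and (iii) $\tau\le R_{0,+}(\xi,\eta,0)/((1-\kappa)\alpha_+)$, where $\kappa=\tfrac12(1-L\alpha_-/\alpha_+)$. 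The reason is that the boundary terms from the inner factor are not of a fixed sign: the $F_{s,+}$ identity on $\RR_+$ produces $(\tau\alpha_+-R_{s,+}(\xi,\eta,0)+\cdots)|\hat v_+(\xi,\eta,0)|^2$, which is positive in regimes (i)--(ii) but negative in (iii); correspondingly, on $\RR_-$ the term $R_{0,-}(\xi,\eta,0)-\tau\alpha_-$ becomes strictly positive only in regime (iii), and \emph{this} is where \eqref{ipalpha} is actually used (see the computation $R_{0,-}-\tau\alpha_-\ge L^{-1}[R_{0,+}-\tau\alpha_- L]\ge \tfrac{\kappa}{(1-\kappa)L}R_{0,+}$). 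So your picture ``good sign on $\RR_+$, bad sign on $\RR_-$, close by the ratio condition'' is accurate in cases (i)--(ii), but in case (iii) the roles reverse, and the argument there passes instead through the combination $a_{nn}^+\sqrt{\zeta_{s,+}}+a_{nn}^-\sqrt{\zeta_{s,-}}$, whose real part is $\gtrsim\Lambda_1$, to recover $|\hat v_+(0)|$ from the boundary data.

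This is not a cosmetic subdivision: the uniform-in-$(\xi,\eta)$ positivity you hope for in your ``single quadratic form'' paragraph does not hold, and an attempt to run your plan without the split will stall precisely at the boundary term $(\tau\alpha_+-R_{s,+})|\hat v_+(0)|^2$ when $R_{s,+}\gg\tau$. A second, smaller point: in regime (iii) the inner-factor estimate on $\RR_+$ (the analogue of your step for $F_{s,+}$) only yields a gain of $\Lambda_1^2/\tau$ for $\|\hat v_+\|^2$, not $\Lambda_1^2$; the full $\Lambda_1^4/\tau$ in \eqref{4.4} is recovered only after combining with the outer-factor estimate and absorbing a residual $\Lambda_1^2\|\hat v_\pm\|^2$ on the right, which works because $\Lambda_1^2/\tau\gtrsim 1$ in that regime. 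Everything else in your plan (Lipschitz remainders absorbed by large $\tau$, $s$-perturbations via \eqref{2-15bis}, recovery of $\partial_y^2\hat v_\pm$ from the equation at the end) matches the paper.
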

\textit{Proof of Theorem \ref{pr22}.}
We integrate \eqref{4.4} with respect to $\xi$ and $\eta$.
Since $\Lambda_1$ is positive and 1-homogeneous with respect to $|\xi|$, $\tau$ and $|\eta|^{1/2}$, $\Lambda_1$ and its the powers appearing in \eqref{4.4} can be bounded from below and  above by polynomials with the same degree (with respect to  $|\xi|$, $\tau$ and $|\eta|^{1/2}$). We can then choose the suitable polynomials that, thanks to   \eqref{nor1} and \eqref{nor2}, give \eqref{5.16}.
\qed

\subsection{Proof of Proposition \ref{prop3.1}}
Let us define two operators
\begin{equation*}%\label{3.10}
E_{s,\pm}=\der_y+iT_{\pm}(\xi+i\tau s\gamma,y)-\left(\tau\varphi^\prime_{\pm}(y)+\sqrt{\zeta_{s,\pm}(\xi,\eta,y)}\right),
\end{equation*}
\begin{equation*}%\label{3.11}
F_{s,\pm}=\der_y+iT_{\pm}(\xi+i\tau s\gamma,y)-\left(\tau\varphi^\prime_{\pm}(y)-\sqrt{\zeta_{s,\pm}(\xi,\eta,y)}\right).
\end{equation*}
With all the definitions given above, we thus obtain that
\begin{equation}\label{3.12}
P_{s,+}\hat{v}_+=E_{s,+}a_{nn}^+(y)F_{s,+}\hat{v}_+-\hat{v}_+\der_y\left(a_{nn}^+(y)\sqrt{\zeta_{s,+}(\xi,\eta,y)}\right),
\end{equation}
\begin{equation}\label{3.13}
P_{s,-}\hat{v}_-=F_{s,-}a_{nn}^-(y)E_{s,-}\hat{v}_-+\hat{v}_-\partial_y\left(a_{nn}^-(y)\sqrt{\zeta_{s,-}(\xi,\eta,y)}\right).
\end{equation}

Similarly to the elliptic case, we distinguish three cases:
\begin{eqnarray*}
	1^{st}\mbox{ case}&\tau\geq \frac{\Lambda_0}{s_0}& \\
		2^{nd}\mbox{ case}& \frac{R_{0,+}(\xi,\eta,0)}{(1-\kappa)\alpha_+}\leq\tau\leq \frac{\Lambda_0}{s_0}&\\
			3^{rd}\mbox{ case}& \tau\leq\frac{R_{0,+}(\xi,\eta,0)}{(1-\kappa)\alpha_+}&
			\end{eqnarray*}
where $s_0$ will be chosen later and
\begin{equation*}%\label{kappa}
	\kappa=\frac{1}{2}\left(1-L\frac{\alpha_-}{\alpha_+}\right),
\end{equation*}
notice that by \eqref{ipalpha} we have $\kappa>0$.
\subsubsection{First case }\label{firstcase}
In this case we assume
\begin{equation}\label{case1}
	\tau\geq \frac{\Lambda_0}{s_0}.
\end{equation}
\begin{pr}
There exist a constant $C$ depending on $\lambda_0$ and $M_0$ such that
\begin{equation}\label{1-8p}
\left|\left(P_{s,\pm}-P_{0,\pm}\right)\hat{v}_\pm\right|	\leq Cs\Lambda_1\left(|\der_y \hat{v}_\pm|+\Lambda_1|\hat{v}_\pm|\right),
\mbox{ for }|y|\leq 1/\beta,
\end{equation}
\begin{equation}\label{2-8p}
	\left|P_{0,+}\hat{v}_+-E_{0,+}a_{nn}^+(y)F_{0,+}\hat{v}_+\right|\leq C\Lambda_0|\hat{v}_+|,
\end{equation}
\begin{equation}\label{3-8p}
	\left|P_{0,-}\hat{v}_--F_{0,-}a_{nn}^-(y)E_{0,-}\hat{v}_-\right|\leq C\Lambda_0|\hat{v}_-|.
\end{equation}
\end{pr}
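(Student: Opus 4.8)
The three claimed bounds concern how far the frozen operators $P_{s,\pm}$ are from $P_{0,\pm}$ (the case $s=0$) and from their formal factorizations $E_{0,+}a_{nn}^+F_{0,+}$ and $F_{0,-}a_{nn}^-E_{0,-}$. The plan is to prove all three by direct algebraic manipulation of the explicit expressions already written down in \eqref{2.14}, \eqref{3.12}, \eqref{3.13} and the definitions of $E_{s,\pm}$, $F_{s,\pm}$, together with the a priori size estimates \eqref{1-15bis}--\eqref{derivt}.

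For \eqref{1-8p}: I would write $P_{s,\pm}\hat v_\pm - P_{0,\pm}\hat v_\pm$ by expanding the two second-order expressions in the formula for $P_{s,\pm}\hat v_\pm$ and collecting powers of $s$. The difference comes from two sources: the term $iT_\pm(\xi+i\tau s\gamma,y)=iT_\pm(\xi,y)-\tau s\, T_\pm(\gamma,y)$, which contributes terms linear and quadratic in $\tau s$, and the term $\zeta_{s,\pm}-\zeta_{0,\pm}$, which by \eqref{2.14} equals $-s^2\tau^2\B_\pm(\gamma,\gamma,y)+2is\tau\B_\pm(\xi,\gamma,y)$. Each such correction is multiplied by $\hat v_\pm$ or $\der_y\hat v_\pm$; one estimates $|\B_\pm(\gamma,\gamma,y)|\le C$, $|\B_\pm(\xi,\gamma,y)|\le C|\xi|\le C\Lambda_1$, $|T_\pm(\gamma,y)|\le C$, and uses $s\tau\le s_0\tau\le$ (bounded multiple of) $\Lambda_1$ together with $|y|\le 1/\beta$ to bound the Lipschitz-continuous coefficients. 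Crucially every resulting term carries at least one factor $s$, and the remaining factors are at most $\Lambda_1^2|\hat v_\pm|$ or $\Lambda_1|\der_y\hat v_\pm|$, which is exactly the right-hand side of \eqref{1-8p}.

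For \eqref{2-8p} and \eqref{3-8p}: starting from the exact identities \eqref{3.12} and \eqref{3.13} with $s=0$, the difference $P_{0,+}\hat v_+ - E_{0,+}a_{nn}^+F_{0,+}\hat v_+$ equals the lower-order remainder $-\hat v_+\,\der_y\!\big(a_{nn}^+(y)\sqrt{\zeta_{0,+}(\xi,\eta,y)}\big)$ (and analogously, with a $+$ sign, for the minus side). So the content of these two estimates is purely the bound $\big|\der_y\big(a_{nn}^\pm(y)\sqrt{\zeta_{0,\pm}(\xi,\eta,y)}\big)\big|\le C\Lambda_0$. Here $a_{nn}^\pm$ is Lipschitz in $y$ by \eqref{1.3} and bounded below by $\lambda_0$, and $\zeta_{0,\pm}(\xi,\eta,y)=\B_\pm(\xi,\xi,y)+i\eta/a_{nn}^\pm(y)$, whose real part is bounded below by $\lambda_0\lambda_1|\xi|^2/$const (via \eqref{1.15}) so that $\sqrt{\zeta_{0,\pm}}$ never degenerates. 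Differentiating $\sqrt{\zeta_{0,\pm}}$ in $y$ produces $\der_y\zeta_{0,\pm}/(2\sqrt{\zeta_{0,\pm}})$; the numerator is $O(|\xi|^2+|\eta|)=O(\Lambda_0^2)$ using the Lipschitz bounds \eqref{1.15bis}, \eqref{1.3}, and the denominator is $\gtrsim \Lambda_0$ by \eqref{stimeRzer}, giving an $O(\Lambda_0)$ bound; the product rule then yields $|\der_y(a_{nn}^\pm\sqrt{\zeta_{0,\pm}})|\le C\Lambda_0$, as required.

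The only delicate point is making sure the square root is controlled away from its branch point, i.e. that $\zeta_{0,\pm}$ stays in a region where $\sqrt{\,\cdot\,}$ is smooth with good lower bounds; this is exactly what \eqref{stimeRzer} provides, combined with ellipticity \eqref{1.15} guaranteeing $\Re\zeta_{0,\pm}\gtrsim|\xi|^2$. Once that is in hand, everything is a bookkeeping exercise in the explicit formulas and no genuinely new idea is needed; the main obstacle is simply keeping track of which terms carry a factor of $s$ (for \eqref{1-8p}) and organizing the product-rule computation for the square root term cleanly (for \eqref{2-8p}--\eqref{3-8p}).
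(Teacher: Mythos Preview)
Your proposal is correct and follows essentially the same route as the paper: for \eqref{1-8p} the paper refers to the analogous computation in \cite{dcflvw}, which is exactly the direct expansion in powers of $s$ that you describe, and for \eqref{2-8p}--\eqref{3-8p} the paper invokes \eqref{stimeRzer}, \eqref{1.2} and \eqref{1.15bis}, which is precisely your argument via the identities \eqref{3.12}--\eqref{3.13} and the bound $|\partial_y(a_{nn}^\pm\sqrt{\zeta_{0,\pm}})|\le C\Lambda_0$. One small clarification: the restriction $|y|\le 1/\beta$ is used not to bound the Lipschitz coefficients themselves but to control $|\varphi'_\pm(y)|=|\alpha_\pm+\beta y|\le \alpha_\pm+1$ uniformly in $\beta$, since $\tau\varphi'_\pm$ does not cancel in the cross terms of the expansion of $P_{s,\pm}-P_{0,\pm}$.
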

\begin{proof}
The proof of estimate \eqref{1-8p} follows the same lines of the proof of Lemma 3.2 in \cite{dcflvw}.
Estimates \eqref{2-8p} and \eqref{3-8p} easily follow from \eqref{stimeRzer}, \eqref{1.2} and \eqref{1.15bis}.
\end{proof}

\begin{lemma}\label{lem4.2}
Let $\tau\geq1$ and assume \eqref{case1}. There exists a positive constant $C$ depending only on $\lambda_0$ and $M_0$ such that,
if $0<s\leq s_0\leq 1/C$, we have
\begin{eqnarray}\label{4.19}
%\begin{aligned}
&&\Lambda_1|a_{nn}^+(0)F_{0,+}\hat{v}_+(\xi,\eta,0)|^2+\Lambda_1^3|\hat{v}_+(\xi,\eta,0)|^2+\Lambda_1^4||\hat{v}_+(\xi,\eta,\cdot)||^2_{L^2(\mathbb{R}_+)}+\Lambda_1^2||\partial_y\hat{v}_+(\xi,\eta,\cdot)||^2_{L^2(\mathbb{R}_+)}\nonumber\\
&&\leq C ||P_{s,+}\hat{v}_{+}(\xi,\eta,\cdot)||^2_{L^2(\mathbb{R}_+)}
%\end{aligned}
\end{eqnarray}
and
\begin{eqnarray}\label{4.20}
%\begin{aligned}
&&-\Lambda_1|a_{nn}^-(0)E_{0,-}\hat{v}_-(\xi,\eta,0)|^2-\Lambda_1^3|\hat{v}_-(\xi,\eta,0)|^2+\Lambda_1^4||\hat{v}_-(\xi,\eta,\cdot)||^2_{L^2(\mathbb{R}_-)}+\Lambda_1^2||\partial_y\hat{v}_-(\xi,\eta,\cdot)||^2_{L^2(\mathbb{R}_-)}\nonumber\\
&&\quad\leq C ||P_{s,-}\hat{v}_{-}(\xi,\eta,\cdot)||^2_{L^2(\mathbb{R}_-)},
%\end{aligned}
\end{eqnarray}
for ${\rm supp}(\hat{v}_-(\xi,\cdot))\subset[-\frac{1}{C\beta},\frac{1}{C\beta}]$.
\end{lemma}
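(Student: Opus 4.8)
The plan is to establish the two one–sided estimates \eqref{4.19} and \eqref{4.20} by exploiting the factorizations \eqref{3.12}–\eqref{3.13} together with the perturbative bounds \eqref{1-8p}–\eqref{3-8p} and the ODE energy estimates for the first–order operators $E_{s,\pm}$, $F_{s,\pm}$ in the $y$ variable, valid in the regime \eqref{case1} where $s\tau$ and $\tau$ dominate $\Lambda_0$ so that $\Lambda_1\sim\tau$ and $\Re\sqrt{\zeta_{s,\pm}}$ is comparable to $\Lambda_1$ up to controllable lower–order terms. First I would derive, for each sign, a weighted $L^2(\mathbb{R}_\pm)$ a priori estimate for a single transport–type operator of the form $\partial_y + i\,T_\pm - (\tau\varphi'_\pm \pm \sqrt{\zeta_{s,\pm}})$: multiplying the equation $G w = g$ by $\bar w$ (and by $\partial_y\bar w$), integrating over $\mathbb{R}_\pm$ and integrating by parts in $y$, the real part of the coefficient $\tau\varphi'_\pm \pm \Re\sqrt{\zeta_{s,\pm}}$, which has a definite sign on the relevant side because $\varphi'_\pm(y)=\alpha_\pm+\beta y>0$ for $|y|\le 1/(C\beta)$ and $\Re\sqrt{\zeta}\ge 0$, produces the coercive bulk terms $\Lambda_1^2\|w\|_{L^2(\mathbb{R}_\pm)}^2$ and a boundary term at $y=0$ with the correct sign; the Lipschitz bounds \eqref{1.15bis} on $B_\pm$ (hence on $\zeta$ and on $\sqrt{\zeta}$) control the commutator $\partial_y\sqrt{\zeta_{s,\pm}}$ by $C\Lambda_1$, which is absorbable for $\tau$ large.

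Next I would compose these two scalar estimates. For the $+$ side, apply the energy estimate first to $F_{s,+}$ (with right–hand side $a_{nn}^+ F_{s,+}\hat v_+$, controlling $\hat v_+$, $\partial_y\hat v_+$ and the trace $\hat v_+(\xi,\eta,0)$ in terms of $\|a_{nn}^+F_{s,+}\hat v_+\|$), then apply it to $E_{s,+}$ acting on $a_{nn}^+ F_{s,+}\hat v_+$, using \eqref{3.12} to identify $E_{s,+}(a_{nn}^+ F_{s,+}\hat v_+)$ with $P_{s,+}\hat v_+$ plus the lower–order term $\hat v_+\,\partial_y(a_{nn}^+\sqrt{\zeta_{s,+}})$; this yields control of $\|a_{nn}^+ F_{s,+}\hat v_+\|_{L^2(\mathbb{R}_+)}$ and of the trace $F_{0,+}\hat v_+(\xi,\eta,0)$ by $\|P_{s,+}\hat v_+\|_{L^2(\mathbb{R}_+)}$ plus terms of lower order in $\tau$ that are absorbed for $\tau\ge\tau_0$. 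Throughout, replace $F_{s,+}$ by $F_{0,+}$ and $P_{s,+}$ by $P_{0,+}$ using \eqref{1-8p} and \eqref{2-8p}, paying for this only with an $s_0$– or $\tau_0$–small absorbable error (this is where the smallness $s\le s_0\le 1/C$ is used). For the $-$ side, the roles of $E$ and $F$ are exchanged as in \eqref{3.13}, and the negative sign in front of $\Re\sqrt{\zeta_{0,-}}$ is what forces the \emph{negative}-signed trace contributions $-\Lambda_1|a_{nn}^-E_{0,-}\hat v_-(\xi,\eta,0)|^2-\Lambda_1^3|\hat v_-(\xi,\eta,0)|^2$ to appear on the left of \eqref{4.20}; the support condition $\mathrm{supp}(\hat v_-(\xi,\cdot))\subset[-1/(C\beta),1/(C\beta)]$ guarantees $\varphi'_-$ stays close to $\alpha_-$ and positive so that the bulk terms remain coercive.

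I expect the main obstacle to be bookkeeping the interplay between the two traces and the sign structure: on the $+$ side we must convert control of $F_{0,+}\hat v_+(\xi,\eta,0)$ and of the bulk into the explicit powers $\Lambda_1^3|\hat v_+(\xi,\eta,0)|^2$ and $\Lambda_1^4\|\hat v_+\|^2$ with the right constants, and on the $-$ side we must keep careful track of which boundary terms come with which sign after the two successive integrations by parts (the factorization $F_{s,-}a_{nn}^-E_{s,-}$ puts the "bad'' operator $F$ on the outside, so the outer energy estimate only gives a \emph{lower} bound involving a negative trace). A secondary technical point is ensuring that the commutator and coefficient–variation errors — all of size $O(\Lambda_1)$ times lower–order norms, coming from \eqref{1.15bis} and from $\partial_y a_{nn}^\pm$ — are genuinely absorbable, which is exactly what fixes the thresholds $\tau_0$ and $1/C$ in the statement; the verification that $\beta$ can be chosen (large, depending on $\lambda_0,M_0$) so that $\varphi'_\pm$ dominates these errors on $|y|\le 1/(C\beta)$ is routine but must be done before the absorption is legitimate.
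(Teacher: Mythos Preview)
Your overall architecture is right and matches the paper: factorize into first-order operators, run integration-by-parts energy estimates for each factor, combine, and absorb errors using smallness of $s_0$. There is, however, one concrete gap in the plan as written.

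You propose to run the energy estimates directly with the $s$-operators via the factorization \eqref{3.12}, treating $\hat v_+\,\partial_y\bigl(a_{nn}^+\sqrt{\zeta_{s,+}}\bigr)$ as a lower-order term to be absorbed. In the regime \eqref{case1} this step fails: $\zeta_{s,+}$ can vanish at interior points of the support of $\hat v_+$ (for instance take $\eta=0$ and $\xi$ which is $\B_+$-orthogonal to $\gamma$ with $\B_+(\xi,\xi,y_0)=s^2\tau^2\B_+(\gamma,\gamma,y_0)$; then $\Lambda_0\sim s\tau\le s_0\tau$, so \eqref{case1} holds), and near such a zero $|\partial_y\sqrt{\zeta_{s,+}}|^2\sim|y-y_0|^{-1}$ is not locally integrable. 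Hence that ``error term'' is not in $L^2$ and cannot be absorbed. Relatedly, your assertion that $\Re\sqrt{\zeta_{s,\pm}}$ is comparable to $\Lambda_1$ in case~1 is not correct; in this regime $R_{s,\pm}\le Cs_0\tau$ and may even be zero. The coercivity comes entirely from $\tau\varphi'_\pm\sim\Lambda_1$, with $R_{s,\pm}$ a \emph{small} perturbation.

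The paper avoids this by carrying out the entire energy computation at $s=0$: one sets $\omega_{0,+}=a_{nn}^+F_{0,+}\hat v_+$, expands $\|E_{0,+}\omega_{0,+}\|^2_{L^2(\mathbb R_+)}$ and then $\|F_{0,+}\hat v_+\|^2_{L^2(\mathbb R_+)}$ by integration by parts, and uses that $|\zeta_{0,+}|\gtrsim\Lambda_0^2$ holds unconditionally (see \eqref{stimeRzer}), which gives the clean bound $|\partial_yR_{0,+}|\le C\Lambda_0$ of \eqref{stimeRzero}. In case~1 one has $\Lambda_0\le s_0\tau$, so these derivative terms are genuinely small and absorbable, yielding \eqref{4.30}. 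Only at the very last step does one invoke \eqref{1-8p}--\eqref{2-8p}, which are purely algebraic estimates involving no $\partial_y\sqrt{\zeta_{s,+}}$, to pass from $E_{0,+}a_{nn}^+F_{0,+}$ back to $P_{s,+}$. Since you already cite \eqref{1-8p}--\eqref{2-8p}, the fix to your plan is simply to reorder: apply them \emph{first} to reduce to the $s=0$ factorization, and then run your energy argument there. The $-$ side is handled identically with $E$ and $F$ interchanged; the support condition on $\hat v_-$ enters exactly where you say, to keep $\varphi'_-(y)=\alpha_-+\beta y$ bounded below.
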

\begin{proof}
Define
\begin{equation}\label{omega0}
\omega_{0,+}(\xi,\eta,y)=a_{nn}^+(y)F_{0,+}\hat{v}_+(\xi,\eta,y),\quad	\omega_{0,-}(\xi,\eta,y)=a_{nn}^-(y)E_{0,-}\hat{v}_-(\xi,\eta,y).
\end{equation}

We have
\begin{eqnarray}\label{4.21}
&&\int_0^\infty\left|E_{0,+}\omega_{0,+}\right|^2dy=\int_0^\infty|(\partial_y+i(T_{+}+J_{0,+})\omega_{0,+}|^2dy%\nonumber\\
%&+&
+\int_0^\infty\left(\tau\alpha_{+}+\tau\beta y+R_{0,+}\right)^2|\omega_{0,+}|^2dy\nonumber\\
&&\qquad\qquad-2\Re \int_0^\infty\left(\tau\alpha_{+}+\tau\beta y+R_{0,+}\right)\overline{\omega}_{0,+}\left(\left(\partial_y+i(T_{+}+J_{0,+}\right)\omega_{0,+}\right)dy\nonumber\\
&&\qquad\geq\int_0^\infty\left(\tau\alpha_{+}+\tau\beta y+R_{0,+}\right)^2|\omega_{0,+}|^2dy-%\nonumber\\&-&
\int_0^\infty\left(\tau\alpha_{+}+\tau\beta y+R_{0,+}\right)\partial_y\left|\omega_{0,+}\right|^2dy\nonumber\\
&&\qquad= \int_0^\infty\left[\left(\tau\alpha_{+}+\tau\beta y+R_{0,+}\right)^2+(\tau\beta+\partial_yR_{0,+})\right]|\omega_{0,+}|^2dy
+\left(|\omega_{0,+}|^2(\tau\alpha_++R_{0,+})\right)_{|_{y=0}}
\end{eqnarray}
where we omit the arguments $(\xi,\eta,y)$ for sake of shortness.
Since, by \eqref{1.3} and \eqref{stimeRzer}, there exists a constant $C_0$, depeding only on $\lambda_0$ and $M_0$, such that
\begin{equation}\label{stimeRzero}
	 C_0^{-1}\Lambda_0\leq R_{0,+}\leq C_0\Lambda_0,\quad \left|\partial_y R_{0,+}\right|\leq C_0\Lambda_0,
\end{equation}
and by \eqref{case1} (recalling that $y>0$) we can estimate
\begin{eqnarray}\label{4.23}
(\tau\alpha_++\tau\beta y+R_{0,+})^2+(\tau\beta y+\partial_yR_{0,+})&\geq&
\frac{1}{2}(\tau^2\alpha_+^2+C_0^{-2}\Lambda_0^2)+\tau\beta-C_0\Lambda_0\nonumber\\
&\geq& \frac{1}{2}\min\{\alpha_+^2,C_0^{-2}\}(\tau^2+\Lambda_0^2)+\tau\beta-C_0s_0\tau\nonumber\\
&\geq& \frac{1}{2}\min\{\alpha_+^2,C_0^{-2}\}(\tau^2+\Lambda_0^2)+\frac{\tau\beta}{2}.
\end{eqnarray}
provided

\begin{equation}\label{szero}
s_0\leq \frac{\beta}{2C_0}.\end{equation}

Combining \eqref{4.21}, \eqref{stimeRzero} and \eqref{4.23}  and the fact that, by \eqref{case1},
\[\tau\lesssim\Lambda_1\lesssim\tau,\]
yields
\begin{equation}\label{4.24}
||E_{0,+}\omega_{0,+}(\xi,\eta,\cdot)||^2_{L^2(\mathbb{R}_{+})}
\geq C^{-1}\left(\Lambda_1^2\int_0^\infty|\omega_{0,+}(\xi,\eta,y)|^2dy+\Lambda_1|\omega_{0,+}(\xi,\eta,0)|^2\right),
\end{equation}
where $C$ depends only on $\lambda_0$ and $M_0$ and provided \eqref{szero} holds true.

Similarly, we have that
\begin{eqnarray}\label{4.25}
&&\lambda_0^{-2}\int_0^\infty\left|\omega_{0,+}\right|^2dy
\geq\int_0^\infty|\partial_y\hat{v}_++i\left(T_{+}+J_{0,+}\right)\hat{v}_+|^2dy\nonumber\\
&&+\int_0^\infty\left[\left(\tau\alpha_{+}+\tau\beta y-R_{0,+}\right)^2+\tau\beta-\partial_yR_{0,+}\right]|\hat{v}_+|^2dy
+\left((\tau\alpha_{+}-R_{0,+})|\hat{v}_+|^2\right)_{|_{y=0}}.
\end{eqnarray}
The assumption \eqref{case1} and \eqref{stimeRzero} imply
\begin{equation}\label{4.26}
\tau\alpha_{+}+\tau\beta y-R_{0,+}
\geq\tau\alpha_+ -C_0\Lambda_0\geq \tau(\alpha_+-C_0s_0)\geq \frac{\tau\alpha_+}{2}
\end{equation}
provided
\begin{equation}\label{szerobis}
s_0\leq \frac{C_0\alpha_+}{2}.\end{equation}
Thus, if \eqref{szero} and \eqref{szerobis} hold true, by \eqref{4.26} and \eqref{stimeRzero}
we have
\begin{equation}\label{boh}
	\left(\tau\alpha_{+}+\tau\beta y-R_{0,+}\right)^2+\tau\beta-\partial_yR_{0,+}\geq \frac{\tau^2\alpha_+^2}{4} +\frac{\tau\beta}{2}\geq C\Lambda_1^2.
\end{equation}
Also   by \eqref{1.2}, \eqref{2.12}, and the fact that
\[|J_{0,+}|\leq |\zeta_{0,+}|\leq C \Lambda_0\]
for some $C$ depending only on $\lambda_0$, we have that, for any $\eps\leq 1$
\begin{eqnarray}\label{4.28}
\int_0^\infty|\partial_y\hat{v}_++i\left(T_{+}+J_{0,+}\right)\hat{v}_+|^2dy&\geq&\eps\int_0^\infty|\partial_y\hat{v}_++i\left(T_{+}+J_{0,+}\right)\hat{v}_+|^2dy\nonumber\\
&\geq&\eps\left\{\frac{1}{2}\int_0^\infty\left|\partial_y\hat{v}_+\right|^2dy-2\int_0^\infty|T_{+}+J_{0,+}|^2|\hat{v}_+|^2dy\right\}\nonumber\\
&\geq&\frac{\eps}{2}\int_0^\infty|\partial_y\hat{v}_+|^2dy-C\eps\Lambda_0^2\int_0^\infty|\hat{v}_+|^2dy.
\end{eqnarray}
 Choosing $\eps$ sufficiently small, we obtain, from \eqref{4.25}, \eqref{boh} and \eqref{4.28},
\begin{equation}\label{4.29}
||\omega_{0,+}||^2_{L^2(\mathbb{R}_{+})}
\geq\frac{1}{C}\left( \int_0^\infty|\partial_y\hat{v}_+|^2dy+\Lambda_1^2\int_0^\infty|\hat{v}_+|^2dy+\Lambda_1|\hat{v}_+|^2_{|_{y=0}}\right),
\end{equation}
where $C$ depends only on $\lambda_0$ and $M_0$.

Recalling \eqref{omega0} and combining \eqref{4.24} and \eqref{4.29} yields
\begin{eqnarray}\label{4.30}
\left\|E_{0,+}a_{nn}^+F_{0,+}\hat{v}_+\right\|^2_{L^2(\mathbb{R}_{+})}&\geq&\frac{1}{C}\left\{\Lambda_1^2
\left\|\partial_y\hat{v}_+\right\|^2_{L^2(\mathbb{R}_{+})}
+\Lambda_1^4\left\|\hat{v}_+\right\|^2_{L^2(\mathbb{R}_{+})}\right.\nonumber\\&&+\left.\Lambda_1^3|\hat{v}_
+|^2_{|_{y=0}}+\Lambda_1|a_{nn}^+F_{0,+}\hat{v}_+|^2_{|_{y=0}}\right\},
\end{eqnarray}
where $C$ depends only on $\lambda_0$ and $M_0$.

By \eqref{4.30},  \eqref{1-8p}, and by \eqref{2-8p},
for $s_0$  small enough \eqref{4.19} follows.

The proof of\eqref{4.20} follows the same path, the only difference is that, in the proof of  \eqref{4.26} the assumption
that ${\rm supp}(\hat{v}_-(\xi,\cdot))\subset[-\frac{1}{C\beta},\frac{1}{C\beta}]$ comes into play.
\end{proof}

In order to conclude the proof of Proposition \ref{prop3.1} in this case, we need to connect the traces of the function for $y=0$ to the transmission conditions $\sigma_0$ and $\sigma_1$. This is done in next Lemma.

\begin{lemma}\label{lem4.3}
Let  $\tau\geq1$ and assume \eqref{case1}. There exists a positive constant $C$ depending only on $\lambda_0$ and $M_0$ such that if $s_0\leq 1/C$ then
\begin{eqnarray}\label{4.37}
%\begin{aligned}
\Lambda_1\sum_{\pm}|V_{\pm}(\xi,\eta)|^2\!\!\!\!&+\!\!\!\!&\Lambda_1^3\sum_{\pm}|\hat{v}_{\pm}(\xi,\eta,0)|^2+\Lambda_1^4\sum_{\pm}||\hat{v}_{\pm}(\xi,\eta,\cdot)||^2_{L^2(\mathbb{R}_{\pm})}
+\Lambda_1^2\sum_{\pm}||\partial_y\hat{v}_{\pm}(\xi,\eta,\cdot)||^2_{L^2(\mathbb{R}_{\pm})}\nonumber\\
&&\leq C\left\{\sum_{\pm}||P_{s,\pm}\hat{v}_{\pm}(\xi,\eta,\cdot)||^2_{L^2(\mathbb{R}_{\pm})}+\Lambda_1|\sigma_1(\xi,\eta)|^2+C\Lambda_1^3|\sigma_0(\xi,\eta)|^2\right\},
%\end{aligned}
\end{eqnarray}
if ${\rm supp}(\hat{v}_\pm(\xi,\cdot))\subset[-\frac{1}{C\beta},\frac{1}{C\beta}]$.
\end{lemma}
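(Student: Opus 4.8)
The goal is to pass from the estimates \eqref{4.19} and \eqref{4.20}, which control the traces of $\hat v_+$ through $F_{0,+}\hat v_+(\xi,\eta,0)$ and of $\hat v_-$ through $E_{0,-}\hat v_-(\xi,\eta,0)$, to an estimate in which the boundary data appear only through $\sigma_0$ and $\sigma_1$. The plan is to express the ``good'' combinations $\omega_{0,+}(\xi,\eta,0)=a_{nn}^+(0)F_{0,+}\hat v_+(\xi,\eta,0)$ and $\omega_{0,-}(\xi,\eta,0)=a_{nn}^-(0)E_{0,-}\hat v_-(\xi,\eta,0)$ in terms of the quantities $V_\pm(\xi,\eta)$ of \eqref{Vpm} and the traces $\hat v_\pm(\xi,\eta,0)$. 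Recalling the definitions of $E_{s,\pm}$, $F_{s,\pm}$ and \eqref{Vpm}, one has, evaluated at $y=0$,
\begin{equation*}
\omega_{0,+}(\xi,\eta,0)=V_+(\xi,\eta)-a_{nn}^+(0)\sqrt{\zeta_{0,+}(\xi,\eta,0)}\,\hat v_+(\xi,\eta,0)+r_+,
\end{equation*}
\begin{equation*}
\omega_{0,-}(\xi,\eta,0)=V_-(\xi,\eta)+a_{nn}^-(0)\sqrt{\zeta_{0,-}(\xi,\eta,0)}\,\hat v_-(\xi,\eta,0)+r_-,
\end{equation*}
where the remainders $r_\pm$ collect the difference between $V_\pm$ (built with $T_\pm(\xi+i\tau s\gamma,0)$ and the weight $\tau\alpha_\pm$) and the exact value of $E_{0,\pm}$ or $F_{0,\pm}$ applied to $\hat v_\pm$; by \eqref{1-8p}, \eqref{derivt} and \eqref{stimeRzer} one checks $|r_\pm|\le Cs\Lambda_1(|\partial_y\hat v_\pm(\xi,\eta,0)|+\Lambda_1|\hat v_\pm(\xi,\eta,0)|)$, which will be absorbable for $s_0$ small. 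Next I would use $\sigma_1=V_+-V_-$ and $\sigma_0=\hat v_+(\xi,\eta,0)-\hat v_-(\xi,\eta,0)$ to eliminate one trace: write $\hat v_-(\xi,\eta,0)=\hat v_+(\xi,\eta,0)-\sigma_0$ and $V_-=V_+-\sigma_1$, and subtract the two displayed identities. This produces
\begin{equation*}
\omega_{0,+}(\xi,\eta,0)-\omega_{0,-}(\xi,\eta,0)=\sigma_1-\Big(a_{nn}^+(0)\sqrt{\zeta_{0,+}}+a_{nn}^-(0)\sqrt{\zeta_{0,-}}\Big)\hat v_+(\xi,\eta,0)+a_{nn}^-(0)\sqrt{\zeta_{0,-}}\,\sigma_0+(r_+-r_-).
\end{equation*}

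The main point is then a lower bound of the form $|\omega_{0,+}(\xi,\eta,0)|+|\omega_{0,-}(\xi,\eta,0)|+\Lambda_1|\hat v_+(\xi,\eta,0)|\gtrsim \Lambda_1|\hat v_+(\xi,\eta,0)|$ combined with the coercivity $\Re\sqrt{\zeta_{0,\pm}}=R_{0,\pm}\ge C^{-1}\Lambda_0$ from \eqref{stimeRzer}. In the regime of case 1 we have $\Lambda_0\le s_0\tau$, so $\Lambda_1\sim\tau$ and the term $\Lambda_1^3|\hat v_+(\xi,\eta,0)|^2$ on the left of \eqref{4.37} is $\sim\tau^3|\hat v_+(\xi,\eta,0)|^2$; it is already controlled by the left-hand sides of \eqref{4.19}–\eqref{4.20}, so in this case I do not even need to bound $\hat v_+(\xi,\eta,0)$ via $\sigma_0,\sigma_1$. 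Concretely, add \eqref{4.19} and \eqref{4.20}: the left side then contains $\Lambda_1^3(|\hat v_+|^2+|\hat v_-|^2)$ with a $-$ sign on the $\hat v_-$ term and a $+$ sign on the $\hat v_+$ term, plus $\Lambda_1(|a_{nn}^+F_{0,+}\hat v_+(\xi,\eta,0)|^2-|a_{nn}^-E_{0,-}\hat v_-(\xi,\eta,0)|^2)$. Using $|\hat v_-(\xi,\eta,0)|^2\le 2|\hat v_+(\xi,\eta,0)|^2+2|\sigma_0|^2$ and $|\omega_{0,-}(\xi,\eta,0)|^2\le 2|\omega_{0,+}(\xi,\eta,0)|^2+2|\omega_{0,+}(\xi,\eta,0)-\omega_{0,-}(\xi,\eta,0)|^2$ together with the identity above and \eqref{stimeRzer}, \eqref{1-15bis}, the negative contributions $-\Lambda_1^3|\hat v_-(\xi,\eta,0)|^2$ and $-\Lambda_1|\omega_{0,-}(\xi,\eta,0)|^2$ are dominated, up to absorbable error and up to a constant times $\Lambda_1|\sigma_1|^2+\Lambda_1^3|\sigma_0|^2$, by the positive terms $\Lambda_1^3|\hat v_+(\xi,\eta,0)|^2$ and $\Lambda_1|\omega_{0,+}(\xi,\eta,0)|^2$ coming from \eqref{4.19}. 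Rearranging yields \eqref{4.37}.

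The step I expect to be the main obstacle is controlling the coupling term $\big(a_{nn}^+(0)\sqrt{\zeta_{0,+}}+a_{nn}^-(0)\sqrt{\zeta_{0,-}}\big)\hat v_+(\xi,\eta,0)$ and, more generally, making sure all error contributions — the remainders $r_\pm$ (order $s\Lambda_1$ times first-order trace quantities) and the cross terms generated by the Cauchy–Schwarz splittings — can be absorbed into the left-hand side of the summed inequality \eqref{4.19}+\eqref{4.20}. This is where one uses that $s\le s_0$ is small (so $Cs_0<1$), that $|\partial_y\hat v_\pm(\xi,\eta,0)|$ is bounded in terms of $\Lambda_1|\hat v_\pm(\xi,\eta,0)|$ plus $|\omega_{0,\pm}(\xi,\eta,0)|$ via \eqref{omega0} and the definitions of $E_{0,\pm},F_{0,\pm}$ (together with \eqref{2-15bis}, \eqref{derivt}), and that $\Lambda_1\sim\tau$ in case 1 so that powers of $\Lambda_1$ match cleanly. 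Once the absorption bookkeeping is organized, \eqref{4.37} follows by collecting terms; I would present the absorption with a generic small parameter and then fix $s_0\le 1/C$ at the end.
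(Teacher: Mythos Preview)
Your approach is workable but takes a substantially more circuitous route than the paper.  The paper never adds \eqref{4.19} and \eqref{4.20}; instead it uses \eqref{4.19} \emph{alone}, which already gives
\[
\Lambda_1|a_{nn}^+(0)F_{0,+}\hat v_+(\xi,\eta,0)|^2+\Lambda_1^3|\hat v_+(\xi,\eta,0)|^2\le C\|P_{s,+}\hat v_+\|^2_{L^2(\mathbb{R}_+)}.
\]
From the exact algebraic relation $V_+=a_{nn}^+(0)F_{0,+}\hat v_+(\xi,\eta,0)-a_{nn}^+(0)\big(\tau s\,T_+(\gamma,0)+\sqrt{\zeta_{0,+}(\xi,\eta,0)}\big)\hat v_+(\xi,\eta,0)$ and $|\tau sT_++\sqrt{\zeta_{0,+}}|\le C\Lambda_1$ one gets $\Lambda_1|V_+|^2\le C\|P_{s,+}\hat v_+\|^2$ directly.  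Then $\sigma_1=V_+-V_-$ and $\sigma_0=\hat v_+-\hat v_-$ immediately yield the bounds for $V_-$ and $\hat v_-(\xi,\eta,0)$, and \eqref{4.20} is invoked only at the end to recover the \emph{bulk} terms on $\mathbb{R}_-$, with its negative boundary contributions already estimated.  No absorption of negative trace terms against positive ones is needed.

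By contrast, your plan of summing \eqref{4.19}+\eqref{4.20} and then absorbing $-\Lambda_1^3|\hat v_-|^2$, $-\Lambda_1|\omega_{0,-}|^2$ against the positive terms forces you to re-invoke \eqref{4.19} separately anyway (otherwise the constants in the Cauchy--Schwarz splitting $|\hat v_-|^2\le 2|\hat v_+|^2+2|\sigma_0|^2$ destroy the positive $\Lambda_1^3|\hat v_+|^2$), so the detour gains nothing.  Two minor points: your displayed identities for $\omega_{0,\pm}$ carry the wrong sign on the $\sqrt{\zeta_{0,\pm}}$ term (one checks $\omega_{0,+}=V_++a_{nn}^+(0)\big(\tau sT_+(\gamma,0)+\sqrt{\zeta_{0,+}}\big)\hat v_+$ and $\omega_{0,-}=V_-+a_{nn}^-(0)\big(\tau sT_-(\gamma,0)-\sqrt{\zeta_{0,-}}\big)\hat v_-$), and your remainders $r_\pm$ do not actually involve $\partial_y\hat v_\pm$ at all --- they are $O(s\Lambda_1|\hat v_\pm|)$.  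In Case~1 these inaccuracies are harmless since $|\sqrt{\zeta_{0,\pm}}|\le C\Lambda_0\le Cs_0\Lambda_1$ is itself small, but they obscure the structure.
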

\begin{proof} It follows from \eqref{4.19}  that, for some $C$ depending only on $\lambda_0$ and $M_0$,
\begin{equation}\label{4.38}
\Lambda_1|a_{nn}^+(0)F_{0,+}\hat{v}_+(\xi,\eta,0)|^2+\Lambda_1^3|\hat{v}_+(\xi,\eta,0)|^2\leq C \left\|P_{s,+}\hat{v}_{+}(\xi,\eta,\cdot)\right\|^2_{L^2(\mathbb{R}_{+})}.
\end{equation}
Since  by \eqref{Vpm} we have
\[V_+(\xi,\eta)=a_{nn}^+(0)F_{0,+}\hat{v}_+(\xi,\eta,0)-a_{nn}^+(0)\left(\tau s T_+(\gamma,0)+\sqrt{\zeta_{+,0}(\xi,\eta,0)}\right)\hat{v}_+(\xi,\eta,0),\]
and since by \eqref{1-15bis}
\[\left|\tau s T_+(\gamma,0)+\sqrt{\zeta_{+,0}(\xi,\eta,0)}\right|\leq C\Lambda_1,\]
hence, using \eqref{4.38},
\begin{equation}\label{4.39}
\Lambda_1|V_+(\xi,\eta)|^2\leq2\Lambda_1|a_{nn}^+(0)F_{0,+}\hat{v}_+(\xi,\eta,0)|^2+C\Lambda_1^3|\hat{v}_+(\xi,\eta,0)|^2\leq C||P_{s,+}\hat{v}_{+}(\xi,\eta,\cdot)||^2_{L^2(\mathbb{R}_{+})},
\end{equation}
where $C$ depends only on $\lambda_0$ and $M_0$.

By \eqref{sigma1} and \eqref{4.39},
\begin{equation}\label{4.42}
\Lambda_1|V_-(\xi,\eta)|^2\leq 2\Lambda_1|V_+(\xi,\eta)|^2+2\Lambda_1|\sigma_1(\xi,\eta)|^2\leq C ||P_{s,+}\hat{v}_{+}(\xi,\eta,\cdot)||^2_{L^2(\mathbb{R}_{+})}+2\Lambda_1|\sigma_1(\xi,\eta)|^2.
\end{equation}

In a similar way, By \eqref{sigma0} and \eqref{4.39}, we have that
\begin{equation}\label{4.40}
\Lambda_1^3|\hat{v}_-(\xi,\eta,0)|^2\leq 2\Lambda_1^3|\hat{v}_+(\xi,\eta,0)|^2+2\Lambda_1^3|\sigma_0(\xi,\eta)|^2
\leq C ||P_{s,+}\hat{v}_{+}(\xi,\eta,\cdot)||^2_{L^2(\mathbb{R}_{+})}+2\Lambda_1^3|\sigma_0(\xi,\eta)|^2.
\end{equation}

By putting together  \eqref{4.39}, \eqref{4.42} and \eqref{4.40} , we obtain
\begin{equation*}%\label{4.43}
\Lambda_1^3\sum_{\pm}|\hat{v}_{\pm}(\xi,\eta,0)|^2+\Lambda_1\sum_{\pm}|V_{\pm}(\xi,\eta)|^2\leq C||P_{s,+}\hat{v}_{+}(\xi,\eta,0)||^2_{L^2(\mathbb{R}_{+})}+2\Lambda_1^3|\sigma_0(\xi,\eta)|^2+2\Lambda_1|\sigma_1(\xi,\eta)|^2,
\end{equation*}
that, together with \eqref{4.19} and \eqref{4.20} of Lemma \ref{lem4.2}, gives \eqref{4.37}.\end{proof}
\begin{rem}
Since $\tau\geq 1$, we can write \eqref{4.37} in the following weaker form
\begin{eqnarray*}%\label{4.377}
&&\Lambda_1\sum_{\pm}|V_{\pm}(\xi,\eta)|^2+\Lambda_1^3\sum_{\pm}|\hat{v}_{\pm}(\xi,\eta,0)|^2+\frac{\Lambda_1^4}{\tau}\sum_{\pm}||\hat{v}_{\pm}||^2_{L^2(\mathbb{R}_{\pm})}
+\frac{\Lambda_1^2}{\tau}\sum_{\pm}||\partial_y\hat{v}_{\pm}||^2_{L^2(\mathbb{R}_{\pm})}\nonumber\\
&&\quad\leq C\left(\sum_{\pm}||P_{s,\pm}\hat{v}_{\pm}||^2_{L^2(\mathbb{R}_{\pm})}+\Lambda_1|\sigma_1(\xi,\eta)|^2+\Lambda_1^3|\sigma_0(\xi,\eta)|^2\right),
\end{eqnarray*}
where $C$ depends on $\lambda_0$ and $M_0$ only.
\end{rem}
\subsubsection{Some useful estimates}
In this section we write down some estimates that will be useful in second and third cases of the main proof. In both these cases we have
\begin{equation}\label{4-16p}
\tau\leq C_1\Lambda_0	
\end{equation}
for $C_1$ independent on $s$. Notice that
\[\Lambda_0\lesssim\Lambda_1\lesssim\Lambda_0,\]

\begin{lemma}\label{zetabelow}
If \eqref{4-16p} holds, there are two constants $C$ and $s_1$ depending on $\lambda_0$ and $C_1$ only, such that, if $s\leq s_1$, then
  \begin{equation}\label{3-15bis}
	 |\zeta_{s,\pm}|\geq\frac{1}{C}\Lambda_1^2.
 \end{equation}
\end{lemma}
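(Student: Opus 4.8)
\textbf{Proof plan for Lemma \ref{zetabelow}.}
The plan is to estimate from below the modulus of the complex number $\zeta_{s,\pm}(\xi,\eta,y)$ defined in \eqref{2.14}, exploiting that in the regime \eqref{4-16p} the ``elliptic'' contribution $\B_{\pm}(\xi,\xi,y)$ together with the time term $\eta/a_{nn}^{\pm}(y)$ dominates, and that the $s$-dependent corrections are of order $s\tau^2\lesssim s\Lambda_1^2$, hence negligible once $s$ is small. First I would separate cases according to the relative size of $|\xi|$ and $|\eta|$. When $|\xi|^2\geq\frac12\Lambda_0^2$, I would bound the real part: $\Re\zeta_{s,\pm}=\B_{\pm}(\xi,\xi,y)-s^2\tau^2\B_{\pm}(\gamma,\gamma,y)\geq\lambda_1|\xi|^2-s^2\tau^2\lambda_1^{-1}$, and using \eqref{4-16p} in the form $s\tau\le sC_1\Lambda_0$ together with $|\gamma|\le1$, this is $\geq\lambda_1|\xi|^2-s^2C_1^2\lambda_1^{-1}\Lambda_0^2$, which for $s$ small enough (depending on $\lambda_0$, hence on $\lambda_1$, and on $C_1$) is $\geq\frac{\lambda_1}{4}|\xi|^2\geq\frac{\lambda_1}{8}\Lambda_0^2\gtrsim\Lambda_1^2$ by \eqref{4-16p}. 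Hence $|\zeta_{s,\pm}|\geq\Re\zeta_{s,\pm}\gtrsim\Lambda_1^2$ in this subcase.

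In the complementary subcase $|\xi|^2<\frac12\Lambda_0^2$, I would instead use the imaginary part. Here $|\eta|=\Lambda_0^2-|\xi|^2>\frac12\Lambda_0^2$, so $|\eta/a_{nn}^{\pm}(y)|\geq\lambda_0|\eta|\geq\frac{\lambda_0}{2}\Lambda_0^2$, while the correction $|2s\tau\B_{\pm}(\xi,\gamma,y)|\leq 2s\tau\lambda_1^{-1}|\xi|\leq 2s\tau\lambda_1^{-1}\Lambda_0\leq 2sC_1\lambda_1^{-1}\Lambda_0^2$ by \eqref{4-16p} and Cauchy--Schwarz in the bilinear form $\B_\pm$. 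Therefore $|\Im\zeta_{s,\pm}|\geq\frac{\lambda_0}{2}\Lambda_0^2-2sC_1\lambda_1^{-1}\Lambda_0^2\geq\frac{\lambda_0}{4}\Lambda_0^2\gtrsim\Lambda_1^2$ provided $s\leq s_1$ with $s_1$ depending only on $\lambda_0$ and $C_1$ (recall $\lambda_1$ depends only on $\lambda_0$). Again $|\zeta_{s,\pm}|\geq|\Im\zeta_{s,\pm}|\gtrsim\Lambda_1^2$. Combining the two subcases and using $\Lambda_0\simeq\Lambda_1$ (which holds precisely because of \eqref{4-16p}) yields \eqref{3-15bis} with a single constant $C$ depending only on $\lambda_0$ and $C_1$.

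I do not expect a serious obstacle here: the only point requiring a little care is the bookkeeping of how small $s$ must be, since the threshold must depend on $C_1$ (which in turn is the constant controlling $\tau\lesssim\Lambda_0$ in the second and third cases of the main proof) and not on $\tau$, $\xi$ or $\eta$; this is automatic because every estimate above is $1$-homogeneous in $(|\xi|,|\eta|^{1/2},\tau)$, so the smallness condition on $s$ decouples from the size of $\Lambda_0$. The ellipticity and Lipschitz bounds \eqref{1.15}, \eqref{1.15bis} on $B_\pm$ and the lower bound $a_{nn}^\pm\geq\lambda_0$ are exactly what is needed to make the above inequalities uniform in $y$.
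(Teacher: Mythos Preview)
Your proof is correct and, in fact, a bit cleaner than the paper's. The paper splits according to whether $\B_{\pm}(\xi,\xi,y)\gtrless 2s^2\tau^2|\gamma|^{-2}\B_{\pm}(\gamma,\gamma,y)$ (a $y$-dependent dichotomy comparing the two pieces of the real part), and then in the first branch introduces a further sub-split $(i)/(ii)$ on the size of $|\eta|$ relative to $s\tau|\xi|$; altogether three subcases. You instead split once, on whether $|\xi|^2$ or $|\eta|$ carries at least half of $\Lambda_0^2$, and in each branch read off directly that the real (resp.\ imaginary) part already dominates after discarding an $O(s)\Lambda_0^2$ correction via \eqref{4-16p} and \eqref{1.15}. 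Both routes rely on exactly the same ingredients (ellipticity bounds \eqref{1.15} for $B_\pm$, the lower bound $a_{nn}^\pm\ge\lambda_0$, and $\tau\le C_1\Lambda_0$), and both produce constants depending only on $\lambda_0$ and $C_1$; your decomposition is $y$-independent and avoids the nested case analysis, at no cost.
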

\begin{proof}
For a fixed $y$ two cases occur

\begin{subequations}
%\label{B-twocases}
\begin{equation}
\label{B-case-a}
\B_{\pm}(\xi,\xi,y)\geq 2s^2\tau^2|\gamma|^{-2}\B_{\pm}(\gamma,\gamma,y),
\end{equation}
\begin{equation}
\label{B-case-b}
\B_{\pm}(\xi,\xi,y)\leq 2s^2\tau^2|\gamma|^{-2}\B_{\pm}(\gamma,\gamma,y).
\end{equation}
\end{subequations}
In case \eqref{B-case-a}, by \eqref{1.15} and \eqref{2.14},  we have

\begin{equation}\label{14-4-17}
\begin{aligned}
&|\zeta_{s,\pm}(\xi,\eta,y)|^2\geq \left(\frac{1}{2}\B_{\pm}(\xi,\xi,y)\right)^2+\left(2s\tau \B_{\pm}(\xi,\gamma,y)+\frac{\eta}{a_{nn}^{\pm}(y)}\right)^2\\
&\geq \frac{\lambda^2_1}{4}|\xi|^2+\left(2s\tau \B_{\pm}(\xi,\gamma,y)+\frac{\eta}{a_{nn}^{\pm}(y)}\right)^2.
\end{aligned}
\end{equation}
Moreover by \eqref{1.15} we have

\begin{equation*}
|\B_{\pm}(\xi,\gamma,y)|\leq \lambda_1^{-1}|\xi||\gamma|.
\end{equation*}
Now either
\begin{equation*}
(i) \quad \quad \quad \quad \frac{|\eta|}{a_{nn}^{\pm}(y)}\geq 4s\tau \lambda_1^{-1}|\xi||\gamma|
\end{equation*}
or
\begin{equation*}
(ii) \quad \quad \quad \quad \frac{|\eta|}{a_{nn}^{\pm}(y)}\leq 4s\tau \lambda_1^{-1}|\xi||\gamma|.
\end{equation*}

If case (i) occurs then we have

\begin{equation*}
\left|2s\tau \B_{\pm}(\xi,\gamma,y)+\frac{\eta}{a_{nn}^{\pm}(y)}\right|\geq \frac{|\eta|}{a_{nn}^{\pm}(y)}-2s\tau \lambda_1^{-1}|\xi||\gamma|\geq \frac{\lambda_0}{2}|\eta|,
\end{equation*}
hence by \eqref{4-16p} and \eqref{14-4-17} we get \eqref{3-15bis}
%\begin{equation}\label{14-4-171}
%|\zeta_{s,+}(\xi,\eta,y)|\geq \frac{\Lambda_1^2}{C},
%\end{equation}
where $C$ depends on $\lambda_0$ only.

If case (ii) occurs then, by \eqref{1.15} and \eqref{B-case-a} we have
\[2\lambda_1s^2\tau^2\leq \B_{\pm}(\xi,\xi,y)\leq \lambda_1^{-1}|\xi|^2\]
hence
\[s\tau\leq\frac{\lambda_1^{-1}}{\sqrt{2}}|\xi|\]
so that
\[|\eta|\leq 4s\tau \lambda_1^{-1}|\xi||\gamma|\leq 4(\lambda_0\lambda_1)^{-1}|\xi|^2,\]
this inequality combined with \eqref{4-16p} yields again \eqref{3-15bis}.

Now, let us consider case \eqref{B-case-b}. By this condition and by \eqref{1.15} we have

\begin{equation*}
\lambda_1|\xi|^2\leq\B_{\pm}(\xi,\xi,y)\leq 2s^2\tau^2|\gamma|^{-2}\B_{\pm}(\gamma,\gamma,y)\leq 2\lambda^{-1}_1s^2\tau^2,	
\end{equation*}
hence

\begin{equation}\label{14-4-172}
\lambda_1|\xi|^2\leq 2\lambda^{-1}_1s^2\tau^2.	
\end{equation}

This inequality and \eqref{4-16p} give

\begin{equation*}
\tau^2\leq C^2_1\Lambda^2_0=C^2_1(\xi|^2+|\eta|)\leq 2C^2_1 \lambda_1^{-2}s^2\tau^2+C^2_1|\eta|	
\end{equation*}
by this inequality we have, for $s\leq\frac{\lambda_1}{2C_1}$,
\begin{equation}\label{14-4-173}
\tau^2\leq 2C^2_1|\eta|,
\end{equation}
that combined with \eqref{14-4-172} gives, for $s\leq\frac{\sqrt{\lambda_0}\lambda_1}{4C_1}$

\begin{equation}\label{14-4-174}
\begin{aligned}
&|\zeta_{s,\pm}(\xi,\eta,y)|\geq\left|2s\tau \B_{\pm}(\xi,\gamma,y)+\frac{\eta}{a_{nn}^{\pm}(y)}\right|\geq \lambda_0|\eta|
-2\lambda_1^{-1}|\xi|s\tau\\
&\geq (\lambda_0-8\lambda_1^{-2}C_1^2s^2)|\eta|\geq \frac{\lambda_0}{2}|\eta|.
\end{aligned}
\end{equation}
By \eqref{14-4-172}, \eqref{14-4-173}, \eqref{14-4-174} we get again \eqref{3-15bis}.
\end{proof}

\begin{lemma}\label{above}
If \eqref{4-16p} holds, there are two constants $C$ and $s_1$ depending on $\lambda_0$, $M_0$ and $C_1$ only, such that, if $s\leq s_1$, then
  \begin{equation}\label{4-15bis}
	 |\partial_y \sqrt{\zeta_{s,\pm}}|\leq C\Lambda_0,
 \end{equation}
 \begin{equation}\label{5-15bis}
	 |\partial_y R_{s,\pm}|\leq C\Lambda_0,\quad|\partial_y J_{s,\pm}|\leq C\Lambda_0,
 \end{equation}
 \begin{equation}\label{1-15}
	 R_{s,\pm}\geq\frac{1}{C}\Lambda_1.
 \end{equation}
\end{lemma}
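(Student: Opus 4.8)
The plan is to exploit the explicit formula for the complex square root recorded just before Proposition \ref{prop3.1} together with the lower bound $|\zeta_{s,\pm}|\geq C^{-1}\Lambda_1^2$ of Lemma \ref{zetabelow}, which is available precisely because we are in the regime \eqref{4-16p}. Writing $\zeta=\zeta_{s,\pm}=a+ib$ with $a=\B_\pm(\xi,\xi,y)-s^2\tau^2\B_\pm(\gamma,\gamma,y)$ and $b=2s\tau\B_\pm(\xi,\gamma,y)+\eta/a_{nn}^\pm(y)$, one has $\sqrt{\zeta}=R_{s,\pm}+iJ_{s,\pm}$ with $R_{s,\pm}=\bigl((|\zeta|+a)/2\bigr)^{1/2}$ and $J_{s,\pm}=b/(2R_{s,\pm})$ (using $R_{s,\pm}>0$). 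Differentiating in $y$ gives
\[
\der_y\sqrt{\zeta}=\frac{\der_y\zeta}{2\sqrt{\zeta}},
\]
so \eqref{4-15bis} reduces to the two bounds $|\der_y\zeta|\leq C\Lambda_0^2$ (an upper bound) and $|\sqrt\zeta|=|\zeta|^{1/2}\geq C^{-1}\Lambda_1$ (a lower bound, which is exactly Lemma \ref{zetabelow} combined with $\Lambda_0\simeq\Lambda_1$ in this case).

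First I would establish $|\der_y\zeta_{s,\pm}|\leq C\Lambda_0^2$. From \eqref{2.14}, $\der_y\zeta$ only involves the $y$-derivatives of $b_{jk}^\pm$ and of $1/a_{nn}^\pm$, which are bounded by $M_1$ (resp.\ a constant depending on $\lambda_0,M_0$) by \eqref{1.15bis}, \eqref{1.2}; tracking the homogeneity one gets a bound by $C(|\xi|^2+s^2\tau^2+s\tau|\xi|+|\eta|)\leq C\Lambda_0^2$ using $s\le 1$ and $\tau\le C_1\Lambda_0$. Combined with $|\sqrt\zeta|\geq C^{-1}\Lambda_1\simeq\Lambda_0$ this yields \eqref{4-15bis}. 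For \eqref{5-15bis}, note $\der_y R_{s,\pm}=\Re(\der_y\sqrt\zeta)$ and $\der_y J_{s,\pm}=\Im(\der_y\sqrt\zeta)$, so both are bounded in modulus by $|\der_y\sqrt\zeta|\le C\Lambda_0$; no extra work is needed. Finally \eqref{1-15}: since $\Re\sqrt\zeta\ge 0$ always, and $|\zeta|\ge C^{-1}\Lambda_1^2$ by Lemma \ref{zetabelow}, it suffices to bound $R_{s,\pm}=\bigl((|\zeta|+a)/2\bigr)^{1/2}$ from below. In case \eqref{B-case-a} of Lemma \ref{zetabelow} one has $a\ge\tfrac12\B_\pm(\xi,\xi,y)\ge 0$, so $R_{s,\pm}\ge(|\zeta|/2)^{1/2}\ge C^{-1}\Lambda_1$; in case \eqref{B-case-b} the argument in Lemma \ref{zetabelow} shows $\tau^2\le 2C_1^2|\eta|$ and $|\xi|$ is comparably small, whence $|\eta|\simeq\Lambda_1^2$ and one must check $a$ is not too negative — here $|a|\le C(|\xi|^2+s^2\tau^2)$ is $o(|\eta|)$ for $s$ small, so again $|\zeta|+a\ge\tfrac12|\zeta|$ and the bound follows. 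Alternatively one can argue uniformly: $R_{s,\pm}^2=(|\zeta|+a)/2$ and $|\zeta|-|a|\ge |\zeta|-|\zeta|$ is useless, so the case split above (mirroring Lemma \ref{zetabelow}) is the clean route.

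The only mildly delicate point is \eqref{1-15}: the lower bound on $|\zeta|$ alone does not immediately give a lower bound on $R_{s,\pm}=\Re\sqrt\zeta$, since if $\zeta$ were close to the negative real axis then $R_{s,\pm}$ could be much smaller than $|\zeta|^{1/2}$. One must therefore verify that the real part $a$ of $\zeta$ stays bounded below relative to $|\zeta|$, i.e.\ that $\zeta$ stays away from a neighborhood of the negative real axis; this is handled exactly by reusing the case distinction \eqref{B-case-a}--\eqref{B-case-b} already set up in the proof of Lemma \ref{zetabelow}, for $s\le s_1$ small. Everything else is routine homogeneity bookkeeping in $(|\xi|,|\eta|^{1/2},\tau)$ using $\tau\le C_1\Lambda_0$, $s\le s_1$, the ellipticity \eqref{1.15}, \eqref{1.2} and the Lipschitz bounds \eqref{1.15bis}, \eqref{1.3}.
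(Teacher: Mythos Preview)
Your proposal is correct and follows essentially the same route as the paper: the bound \eqref{4-15bis} via $\partial_y\sqrt{\zeta}=\partial_y\zeta/(2\sqrt{\zeta})$ combined with $|\partial_y\zeta|\le C\Lambda_0^2$ and the lower bound $|\zeta|\ge C^{-1}\Lambda_1^2$ from Lemma~\ref{zetabelow}; \eqref{5-15bis} as an immediate corollary; and \eqref{1-15} via $2R_{s,\pm}^2=a+|\zeta|$ together with the case split \eqref{B-case-a}--\eqref{B-case-b}. In case~(b) the paper bounds $a+|\zeta|\ge -s^2\tau^2\B_\pm(\gamma,\gamma,y)+|b|$ and estimates $|b|$ from below directly, whereas you argue that $|a|=o(|\zeta|)$ so $a+|\zeta|\ge\tfrac12|\zeta|$; the two are equivalent and equally clean.
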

\begin{proof}
By \eqref{1.15} and \eqref{1.15bis} we have
\begin{equation*}
	 |\partial_y \zeta_{s,\pm}|\leq C\Lambda_1^2.
 \end{equation*}
by this inequality, \eqref{4-16p} and \eqref{3-15bis} we obtain \eqref{4-15bis}.

Inequalities \eqref{5-15bis} follow immediately by \eqref{4-15bis}.

In order to prove \eqref{1-15} we denote

\begin{equation*}
\begin{aligned}
&a=\Re\zeta_{s,\pm}(\xi,\eta,y)=B_{\pm}(\xi,\xi,y)-s^2\tau^2\B_{\pm}(\gamma,\gamma,y),\\
&b=\Im\zeta_{s,\pm}(\xi,\eta,y)=2s\tau \B_{\pm}(\xi,\gamma,y)+\frac{\eta}{a_{nn}^\pm(y)},
\end{aligned}
\end{equation*}
so that we have
\begin{equation*}
2R^2_{s,\pm}=a+\left(a^2+b^2\right)^{1/2}=a+|\zeta_{s,\pm}(\xi,\eta,y)|.
 \end{equation*}
 For a fixed $y$ we distinguish two cases
\begin{subequations}
%\label{B-twocases-1}
\begin{equation}
\label{B-case-a-1}
\B_{\pm}(\xi,\xi,y)\geq 2s^2\tau^2|\gamma|^{-2}\B_{\pm}(\gamma,\gamma,y),
\end{equation}
\begin{equation}
\label{B-case-b-1}
\B_{\pm}(\xi,\xi,y)\leq 2s^2\tau^2|\gamma|^{-2}\B_{\pm}(\gamma,\gamma,y).
\end{equation}
\end{subequations}
If case \eqref{B-case-a-1} occurs then by \eqref{3-15bis} we have

\begin{equation*}
2R^2_{s,\pm}\geq|\zeta_{s,\pm}(\xi,\eta,y)|\geq\frac{\Lambda^2_1}{C}.
 \end{equation*}
Hence, if \eqref{B-case-a-1} is satisfied then \eqref{1-15} is true.

Now, let us consider case \eqref{B-case-b-1}. First, let us notice that in such a case \eqref{B-case-b-1}, by \eqref{1.15} we have

\begin{equation}\label{14-4-175}
|\xi|\leq\sqrt{2}\lambda^{-1}s\tau.
 \end{equation}
By \eqref{4-16p} and \eqref{14-4-175}, we get

\[\tau^2\leq C^2_1\left(|\xi|^2+|\eta|\right)\leq 2\lambda^{-2}_1C^2_1s^2\tau^2+C^2_1|\eta|,\]

so that, for $s\leq\frac{\lambda_1}{2C_1}$, we have

\begin{equation}\label{14-4-176}
\tau^2\leq 2C^2_1|\eta|.
\end{equation}

Now, by \eqref{14-4-175} and \eqref{14-4-176} we have, for $s\leq\frac{\sqrt{\lambda_0}\lambda_1}{4C_1}$,

\begin{equation*}
2R^2_{s,\pm}\geq-s^2\tau^2\B_{\pm}(\gamma,\gamma,y)+|b|\geq \left(\lambda_0-8\lambda_1^{-2}C^2_1s^2\right)|\eta|\geq \frac{\lambda_0}{2}|\eta|
\end{equation*}
this inequality combined with \eqref{14-4-175} and \eqref{14-4-176} gives \eqref{1-15} whenever \eqref{B-case-b-1} is satisfied. The proof is completed.
\end{proof}

\begin{lemma}\label{s}
If \eqref{4-16p} holds, there is a constant $C$ depending on $\lambda_0$ and $C_1$ only, such that
  \begin{equation}\label{6-15bis}
	 |\zeta_{s,\pm}(\xi,\eta,y)-\zeta_{0,\pm}(\xi,\eta,y)|\leq Cs\Lambda_0^2 ,
 \end{equation}
 \begin{equation}\label{8-15bis}
	\left|\sqrt{\zeta_{s,\pm}(\xi,\eta,y)}-\sqrt{\zeta_{0,\pm}(\xi,\eta,y)}\right|\leq Cs\Lambda_0,
 \end{equation}
\begin{equation}\label{11-15bis}
	|R_{s,\pm}(\xi,\eta,y)-R_{0,\pm}(\xi,\eta,y)|\leq Cs\Lambda_0,\quad |J_{s,\pm}(\xi,\eta,y)-J_{0,\pm}(\xi,\eta,y)|\leq Cs\Lambda_0.
 \end{equation}
\end{lemma}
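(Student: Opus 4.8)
The plan is to prove Lemma \ref{s} by direct comparison of $\zeta_{s,\pm}$ with $\zeta_{0,\pm}$ using the explicit formula \eqref{2.14}, exploiting that the $s$-dependent corrections are, term by term, lower order once \eqref{4-16p} confines us to the regime $\tau\le C_1\Lambda_0$. First I would subtract the two expressions: from \eqref{2.14} one sees that
\[
\zeta_{s,\pm}(\xi,\eta,y)-\zeta_{0,\pm}(\xi,\eta,y)=-s^2\tau^2\B_{\pm}(\gamma,\gamma,y)+2is\tau\B_{\pm}(\xi,\gamma,y),
\]
so the difference is exactly the collection of terms carrying a factor of $s$. Using ellipticity \eqref{1.15} we bound $|\B_{\pm}(\gamma,\gamma,y)|\le\lambda_1^{-1}|\gamma|^2\le\lambda_1^{-1}$ and $|\B_{\pm}(\xi,\gamma,y)|\le\lambda_1^{-1}|\xi||\gamma|\le\lambda_1^{-1}|\xi|$, hence
\[
|\zeta_{s,\pm}-\zeta_{0,\pm}|\le \lambda_1^{-1}\left(s^2\tau^2+2s\tau|\xi|\right)\le \lambda_1^{-1}s\tau\left(s\tau+2|\xi|\right).
\]
Since $s\le 1$ and $\tau\le C_1\Lambda_0$, while $|\xi|\le\Lambda_0$, every factor $\tau$ or $|\xi|$ is $\lesssim\Lambda_0$, so the right-hand side is $\le Cs\Lambda_0^2$ with $C$ depending only on $\lambda_0$ (through $\lambda_1$) and $C_1$. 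This gives \eqref{6-15bis}.

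Next, for \eqref{8-15bis} I would use the elementary inequality for the complex square root with non-negative real part, namely $|\sqrt{z_1}-\sqrt{z_2}|\le \dfrac{|z_1-z_2|}{\sqrt{|z_1|}+\sqrt{|z_2|}}$ valid whenever $z_1,z_2$ are not both zero and $\Re\sqrt{z_j}\ge 0$. Applying it with $z_1=\zeta_{s,\pm}$, $z_2=\zeta_{0,\pm}$, and invoking the lower bound $|\zeta_{0,\pm}|\ge C^{-1}\Lambda_1^2$ — which is \eqref{3-15bis} specialized to $s=0$, so no smallness of $s$ is needed there — together with \eqref{6-15bis}, we obtain
\[
\left|\sqrt{\zeta_{s,\pm}}-\sqrt{\zeta_{0,\pm}}\right|\le \frac{Cs\Lambda_0^2}{\sqrt{|\zeta_{0,\pm}|}}\le \frac{Cs\Lambda_0^2}{C^{-1/2}\Lambda_1}\le Cs\Lambda_0,
\]
using once more $\Lambda_0\lesssim\Lambda_1\lesssim\Lambda_0$ from \eqref{4-16p}. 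Finally \eqref{11-15bis} is immediate: $R_{s,\pm}-R_{0,\pm}=\Re(\sqrt{\zeta_{s,\pm}}-\sqrt{\zeta_{0,\pm}})$ and $J_{s,\pm}-J_{0,\pm}=\Im(\sqrt{\zeta_{s,\pm}}-\sqrt{\zeta_{0,\pm}})$, so both are controlled in absolute value by $|\sqrt{\zeta_{s,\pm}}-\sqrt{\zeta_{0,\pm}}|\le Cs\Lambda_0$.

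I do not expect any serious obstacle here; the lemma is essentially bookkeeping with the explicit quadratic form $\zeta_{s,\pm}$. The only point requiring a little care is that \eqref{8-15bis} needs a uniform lower bound on $|\zeta_{0,\pm}|$ of order $\Lambda_1^2$, and one must make sure this is supplied by \eqref{3-15bis} at $s=0$ (which holds without any restriction on $s$) rather than circularly invoking \eqref{3-15bis} for general $s$, which itself already required $s\le s_1$. Also, it is worth noting that \eqref{6-15bis} and its consequences hold for all $s\in[0,1]$ under \eqref{4-16p} — no smallness of $s$ is actually needed for Lemma \ref{s}, unlike the preceding Lemmas \ref{zetabelow} and \ref{above} — so the statement's hypothesis is only that \eqref{4-16p} holds, and I would state the constants as depending on $\lambda_0$ and $C_1$ (and through $\lambda_1$, hence ultimately on $\lambda_0$ alone for that part) as in the lemma.
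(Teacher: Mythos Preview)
Your proof of \eqref{6-15bis} is exactly the paper's: subtract using \eqref{2.14}, bound the bilinear forms by \eqref{1.15}, and invoke \eqref{4-16p}. Your derivation of \eqref{11-15bis} from \eqref{8-15bis} is likewise identical.

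For \eqref{8-15bis} there is a small slip. The inequality you quote,
\[
|\sqrt{z_1}-\sqrt{z_2}|\le \frac{|z_1-z_2|}{\sqrt{|z_1|}+\sqrt{|z_2|}},
\]
is \emph{false} in general for complex $z_j$ with $\Re\sqrt{z_j}\ge 0$: take $z_1=e^{i\theta}$, $z_2=e^{-i\theta}$ with $\theta$ close to $\pi$, so that $|\sqrt{z_1}+\sqrt{z_2}|=2\cos(\theta/2)$ is much smaller than $\sqrt{|z_1|}+\sqrt{|z_2|}=2$. What is true is the identity $|\sqrt{z_1}-\sqrt{z_2}|=|z_1-z_2|/|\sqrt{z_1}+\sqrt{z_2}|$, and since both square roots lie in the closed right half-plane one has
\[
|\sqrt{z_1}+\sqrt{z_2}|\ge \Re\sqrt{z_1}+\Re\sqrt{z_2}\ge R_{0,\pm}\ge \tfrac{\lambda_0^{1/2}\lambda_1^{1/2}}{2}\Lambda_0
\]
by \eqref{stimeRzer}. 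With this correction your argument goes through verbatim and yields \eqref{8-15bis}. This is essentially what the paper does too: it cites \eqref{1-15} (the lower bound on $R_{s,\pm}$) together with \eqref{6-15bis}. Your observation that it suffices to use the lower bound at $s=0$, so that no smallness of $s$ is needed, is correct and in fact slightly cleaner than the paper's phrasing.
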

\begin{proof}
By \eqref{2.14} and \eqref{4-16p} we have
\begin{equation*}
|\zeta_{s,\pm}(\xi,\eta,y)-\zeta_{0,\pm}(\xi,\eta,y)|=|-s^2\tau^2\B_{\pm}(\gamma,\gamma,y)+2is\tau \B_{\pm}(\xi,\gamma,y)|\leq Cs(\tau^2+|\xi|^2)\leq Cs\Lambda^2_0
\end{equation*}
and \eqref{6-15bis} follows.
Inequalities \eqref{8-15bis} \eqref{11-15bis} are immediate consequences of \eqref{1-15} and \eqref{6-15bis}.
\end{proof}

\begin{lemma}\label{y}
If \eqref{4-16p} holds, there are two constants $C$ and $s_1$ depending on $\lambda_0$, $M_0$ and $C_1$ only, such that, if $s\leq s_1$, then
  \begin{equation}\label{9-15bis}
	 |\zeta_{s,\pm}(\xi,\eta,y)-\zeta_{s,\pm}(\xi,\eta,0)|\leq C|y|\Lambda^2_0,
 \end{equation}
 \begin{equation}\label{7-15bis}
|\zeta_{s,\pm}(\xi,\eta,y)-\zeta_{0,\pm}(\xi,\eta,0)|\leq C(|y|+s)\Lambda^2_0,
\end{equation}
\begin{equation}\label{10-15bis}
\left|\sqrt{\zeta_{s,\pm}(\xi,\eta,y)}-\sqrt{\zeta_{0,\pm}(\xi,\eta,0)}\right|\leq C(|y|+s)\Lambda_0,\end{equation}
	\begin{equation}\label{12-15bis}
	|R_{s,\pm}(\xi,\eta,y)-R_{0,\pm}(\xi,\eta,0)|\leq  C(|y|+s)\Lambda_0.
  \end{equation}
\end{lemma}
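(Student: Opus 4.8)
The statement to prove is Lemma \ref{y}, which collects four perturbation estimates for $\zeta_{s,\pm}$, $\sqrt{\zeta_{s,\pm}}$ and $R_{s,\pm}$ comparing the value at a generic $y$ with the reference values at $y=0$, all under the hypothesis $\tau\le C_1\Lambda_0$. My plan is to treat \eqref{9-15bis} first as the genuinely new content of the lemma, then obtain the remaining three by routine combination with the already-established Lemma \ref{s} and Lemma \ref{above}.

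\textbf{Step 1: prove \eqref{9-15bis}.} Write out the difference $\zeta_{s,\pm}(\xi,\eta,y)-\zeta_{s,\pm}(\xi,\eta,0)$ termwise using the definition \eqref{2.14}. Each summand is of the form (entry of $B_\pm$ or of $1/a_{nn}^\pm$ evaluated at $y$ minus the same at $0$), multiplied by a factor that is bounded by $C\Lambda_0^2$: indeed $\B_\pm(\xi,\xi,\cdot)$ and $\B_\pm(\xi,\gamma,\cdot)$ carry a factor $|\xi|^2\le\Lambda_0^2$ resp.\ $s\tau|\xi|\le\Lambda_0^2$ (using $s\le 1$ and $\tau\le C_1\Lambda_0$), $s^2\tau^2\B_\pm(\gamma,\gamma,\cdot)\le C\tau^2\le C_1^2\Lambda_0^2$, and $\eta/a_{nn}^\pm$ is $\le\lambda_0^{-1}|\eta|\le\lambda_0^{-1}\Lambda_0^2$. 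The $y$-dependence of the coefficients is Lipschitz by \eqref{1.15bis} and by the Lipschitz bound on $1/a_{nn}^\pm$ that follows from \eqref{1.2}--\eqref{1.3}, so each coefficient difference is $\le C|y|$. Multiplying and summing over the finitely many terms gives \eqref{9-15bis} with $C$ depending only on $\lambda_0$, $M_0$ and $C_1$.

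\textbf{Step 2: derive \eqref{7-15bis}, \eqref{10-15bis}, \eqref{12-15bis}.} For \eqref{7-15bis} simply insert the intermediate point $\zeta_{s,\pm}(\xi,\eta,0)$ and apply the triangle inequality together with \eqref{9-15bis} and \eqref{6-15bis} of Lemma \ref{s} (the latter evaluated at $y=0$). For \eqref{10-15bis}, use the elementary inequality $|\sqrt{z_1}-\sqrt{z_2}|\le|z_1-z_2|/(|\sqrt{z_1}|+|\sqrt{z_2}|)$; the denominator is bounded below by $R_{s,\pm}(\xi,\eta,y)+R_{0,\pm}(\xi,\eta,0)\ge C^{-1}\Lambda_1$ by the lower bound \eqref{1-15} of Lemma \ref{above} (applied to both $\zeta_{s,\pm}(\cdot,y)$ and $\zeta_{0,\pm}(\cdot,0)$, since $s=0\le s_1$ is admissible), so combining with \eqref{7-15bis} and $\Lambda_0\sim\Lambda_1$ gives \eqref{10-15bis}. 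Finally \eqref{12-15bis} is just the real part of \eqref{10-15bis}: $|R_{s,\pm}(\xi,\eta,y)-R_{0,\pm}(\xi,\eta,0)|=|\Re(\sqrt{\zeta_{s,\pm}(\xi,\eta,y)}-\sqrt{\zeta_{0,\pm}(\xi,\eta,0)})|\le|\sqrt{\zeta_{s,\pm}(\xi,\eta,y)}-\sqrt{\zeta_{0,\pm}(\xi,\eta,0)}|$.

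\textbf{Main obstacle.} There is no deep obstacle here; the one point requiring care is bookkeeping in Step 1 — making sure that every factor accompanying a Lipschitz coefficient difference is genuinely controlled by $\Lambda_0^2$, which is exactly where the standing hypothesis $\tau\le C_1\Lambda_0$ is consumed (for the $s^2\tau^2$ and $s\tau|\xi|$ terms). Once that is in place, \eqref{7-15bis}--\eqref{12-15bis} are formal consequences of \eqref{9-15bis} and the previously proved Lemmas \ref{above} and \ref{s}, with the smallness threshold $s_1$ inherited from Lemma \ref{above} (needed only so that \eqref{1-15} is available).
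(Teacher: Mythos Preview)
Your proof is correct and follows essentially the same route as the paper: \eqref{9-15bis} directly from the Lipschitz regularity \eqref{1.15bis} of the coefficients together with the hypothesis \eqref{4-16p}, then \eqref{7-15bis}--\eqref{12-15bis} by combining with Lemma~\ref{s} and the lower bound \eqref{1-15} of Lemma~\ref{above}. One small slip to fix in Step~2: the elementary inequality you quote should read $|\sqrt{z_1}-\sqrt{z_2}|=|z_1-z_2|/|\sqrt{z_1}+\sqrt{z_2}|$ (equality, with the \emph{sum} inside the modulus, not $|\sqrt{z_1}|+|\sqrt{z_2}|$); your subsequent lower bound $|\sqrt{z_1}+\sqrt{z_2}|\ge \Re\sqrt{z_1}+\Re\sqrt{z_2}=R_{s,\pm}+R_{0,\pm}$ is then the correct one (both real parts are nonnegative by the chosen branch), so the argument goes through unchanged.
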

\begin{proof}
Inequality \eqref{9-15bis} is an immediate consequence of \eqref{1.15}, \eqref{1.15bis} and \eqref{4-16p}. Inequality \eqref{7-15bis} follows by  \eqref{8-15bis} and \eqref{9-15bis}. Inequalities \eqref{10-15bis} and \eqref{12-15bis} follow by \eqref{4-15bis} and \eqref{6-15bis}.
\end{proof}

\subsubsection{Second case}\label{secondcase}
In this case we assume
\begin{equation}\label{case2}
	\frac{R_{0,+}(\xi,\eta,0)}{(1-\kappa)\alpha_+}\leq\tau\leq \frac{\Lambda_0}{s_0}.
\end{equation}
Let us point out that in this case by \eqref{stimeRzer}
\begin{equation}\label{27p}
	\Lambda_0\lesssim \tau \mbox{ and }\Lambda_1\lesssim \Lambda_0.
\end{equation}
By \eqref{3.12}, \eqref{3.13}, \eqref{1-15bis}, \eqref{4-15bis} and \eqref{27p}, we have
\begin{equation}\label{6-27p}
	|P_{s,+}\hat{v}_+-E_{s,+} a^{+}_{nn}(y) F_{s,+}\hat{v}_+|\leq C\Lambda_0|\hat{v}_+|,
\end{equation}
\begin{equation}\label{7-27p}
		|P_{s,-}\hat{v}_--F_{s,-} a^{-}_{nn}(y) E_{s,-}\hat{v}_-|\leq C\Lambda_0|\hat{v}_-|,
\end{equation}
where $C$ depends on $\lambda_0$ and $M_0$ only.

\begin{lemma}\label{lem3.5}
Assume \eqref{case2}. There exist $C$, $s_2\leq s_1$ depending only on $\lambda_0$ and $M_0$ such that,
if $0\leq s\leq s_2$, $\beta\geq C$ and $\tau\geq C$, then we have
\begin{equation}\label{4.53}
\begin{aligned}
\Lambda_1&|V_+(\xi,\eta)+a_{nn}^+(0)\sqrt{\zeta_{s,+}(\xi,\eta,0)}\hat{v}_+(\xi,\eta,0)|^2+\Lambda_1^2||a_{nn}^+(y)F_{s,+}\hat{v}_+(\xi,\eta,\cdot)||^2_{L^2(\mathbb{R}_{+})}\\
&\leq C||E_{s,+}a_{nn}^+(y)F_{s,+}\hat{v}_+(\xi,\eta,\cdot)||^2_{L^2(\mathbb{R}_{+})}
\end{aligned}
\end{equation}
and
\begin{equation}\label{4.54}
\begin{aligned}
\Lambda_1&|V_+(\xi,\eta)+a_{nn}^+(0)\sqrt{\zeta_{s,+}(\xi,\eta,0)}\hat{v}_+(\xi,\eta,0)|^2+\Lambda_1^3|\hat{v}_+(\xi,\eta,0)|^2\\
&+\Lambda_1^4||\hat{v}_+(\xi,\eta,\cdot)||^2_{L^2(\mathbb{R}_{+})}+\Lambda_1^2||\partial_y\hat{v}_+(\xi,\eta,\cdot)||^2_{L^2(\mathbb{R}_{+})}\leq C||P_{s,+}\hat{v}_{+}(\xi,\eta,\cdot)||^2_{L^2(\mathbb{R}_{+})}
\end{aligned}
\end{equation}
provided ${\rm supp}(\hat{v}_+(\xi,\eta,\cdot))\subset[0,\frac{1}{C}]$.
\end{lemma}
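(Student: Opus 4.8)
The plan is to derive Lemma \ref{lem3.5} by the same one-dimensional ODE energy method used in the first case (Lemma \ref{lem4.2}), but now exploiting the full factorization $P_{s,+}\hat v_+=E_{s,+}a_{nn}^+F_{s,+}\hat v_+-\hat v_+\der_y(a_{nn}^+\sqrt{\zeta_{s,+}})$ from \eqref{3.12}, rather than its $s=0$ approximation, since in the regime \eqref{case2} we no longer have $\tau\gtrsim\Lambda_0$ with a universal constant and so we cannot afford to throw away the imaginary part of $\zeta$. First I would set $\omega_{s,+}=a_{nn}^+(y)F_{s,+}\hat v_+$ and compute $\int_0^\infty|E_{s,+}\omega_{s,+}|^2\,dy$ exactly as in \eqref{4.21}: expanding the square, the cross term is a total derivative in $y$ (after taking real parts, the skew-adjoint part $i(T_++J_{s,+})$ drops out up to a sign), producing
\[
\int_0^\infty|E_{s,+}\omega_{s,+}|^2dy\ge\int_0^\infty\Big[(\tau\varphi_+'+R_{s,+})^2+\der_y(\tau\varphi_+'+R_{s,+})\Big]|\omega_{s,+}|^2dy+\big(|\omega_{s,+}|^2(\tau\alpha_++R_{s,+})\big)\big|_{y=0}.
\]
The boundary term is already the first term on the left of \eqref{4.53} once one recalls $V_+=\omega_{s,+}(\xi,\eta,0)-a_{nn}^+(0)(\tau sT_+(\gamma,0)+\sqrt{\zeta_{s,+}(\xi,\eta,0)})\hat v_+(\xi,\eta,0)$ and that $F_{s,+}$ already carries the $-\sqrt{\zeta_{s,+}}$, so $\omega_{s,+}(\xi,\eta,0)=V_+(\xi,\eta)+a_{nn}^+(0)\sqrt{\zeta_{s,+}(\xi,\eta,0)}\hat v_+(\xi,\eta,0)+a_{nn}^+(0)\tau sT_+(\gamma,0)\hat v_+(\xi,\eta,0)$, and the last piece is lower order by \eqref{derivt}. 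The key positivity input is now $R_{s,+}\ge C^{-1}\Lambda_1$ from \eqref{1-15} of Lemma \ref{above} (applicable since \eqref{case2} gives \eqref{4-16p} with $C_1=1/s_0$), together with $|\der_yR_{s,+}|\le C\Lambda_0\lesssim C\Lambda_1$ from \eqref{5-15bis} absorbed by taking $\beta$ and $\tau$ large; this yields the coefficient bound $(\tau\alpha_++\tau\beta y+R_{s,+})^2+(\tau\beta+\der_yR_{s,+})\ge C^{-1}\Lambda_1^2$ and hence \eqref{4.53}.

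Next I would run the analogous computation one level down, on $\hat v_+$ itself, using $F_{s,+}\hat v_+=\omega_{s,+}/a_{nn}^+$: expanding $\int_0^\infty|F_{s,+}\hat v_+|^2dy$ the cross term is again a total derivative, and here the relevant coefficient is $(\tau\varphi_+'-R_{s,+})^2+\der_y(\tau\varphi_+'-R_{s,+})$ with a boundary contribution $-(\tau\alpha_+-R_{s,+})|\hat v_+|^2|_{y=0}$. In the second case the sign of $\tau\alpha_+-R_{s,+}$ is controlled by \eqref{case2}: since $\tau\ge R_{0,+}(\xi,\eta,0)/((1-\kappa)\alpha_+)$ and $R_{s,+}$ is within $Cs\Lambda_0$ of $R_{0,+}$ by \eqref{11-15bis} of Lemma \ref{s}, one gets $R_{s,+}(\xi,\eta,0)\le R_{0,+}(\xi,\eta,0)+Cs_0\Lambda_0\le(1-\kappa)\alpha_+\tau+Cs_0\tau\le(1-\kappa/2)\alpha_+\tau$ for $s_0$ small, so $\tau\alpha_+-R_{s,+}\ge(\kappa/2)\alpha_+\tau>0$ and the boundary term has the \emph{good} sign — this is exactly where the restriction $\tau\ge R_{0,+}/((1-\kappa)\alpha_+)$ is used. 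To handle the interior coefficient $(\tau\alpha_++\tau\beta y-R_{s,+})^2+\tau\beta-\der_yR_{s,+}$ I would again note, using \eqref{12-15bis} of Lemma \ref{y} to replace $R_{s,+}(\xi,\eta,y)$ by $R_{0,+}(\xi,\eta,0)$ up to $C(|y|+s)\Lambda_0$ and shrinking the $y$-support to $[0,1/C]$, that $\tau\alpha_+-R_{s,+}(\xi,\eta,y)\ge(\kappa/3)\alpha_+\tau$, giving $(\ \cdot\ )^2+\tau\beta-\der_yR_{s,+}\ge C^{-1}(\tau^2+\Lambda_0^2)\ge C^{-1}\Lambda_1^2$ after absorbing $\der_yR_{s,+}$ with large $\beta$. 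The gradient term $\int_0^\infty|\der_y\hat v_+|^2$ is recovered exactly as in \eqref{4.28} via the elementary inequality $|\der_y\hat v_++i(T_++J_{s,+})\hat v_+|^2\ge\tfrac12|\der_y\hat v_+|^2-2|T_++J_{s,+}|^2|\hat v_+|^2$ and $|T_++J_{s,+}|\le C\Lambda_0$ (from \eqref{derivt}, \eqref{2-15bis}), the error being swallowed by the $\Lambda_1^2|\hat v_+|^2$ term. This produces the lower bound $\|\omega_{s,+}\|^2_{L^2}\gtrsim\Lambda_1^2\|\hat v_+\|^2+\|\der_y\hat v_+\|^2+\Lambda_1|\hat v_+|^2|_{y=0}$, which chained with \eqref{4.53} gives $\|E_{s,+}a_{nn}^+F_{s,+}\hat v_+\|^2_{L^2}\gtrsim$ the full left side of \eqref{4.54} with $P_{s,+}$ replaced by $E_{s,+}a_{nn}^+F_{s,+}$; finally \eqref{6-27p} lets one pass from $E_{s,+}a_{nn}^+F_{s,+}\hat v_+$ to $P_{s,+}\hat v_+$ by absorbing the error term $C\Lambda_0|\hat v_+|$ (whose $L^2$ norm squared is $\lesssim\Lambda_0^2\|\hat v_+\|^2\lesssim\Lambda_1^2\|\hat v_+\|^2$, a lower-order term relative to $\Lambda_1^4\|\hat v_+\|^2$) into the left-hand side, provided $\tau\ge C$.

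I expect the main obstacle to be the bookkeeping of signs in the two boundary terms at $y=0$ and making sure the constants stay uniform in $s$, $\tau$, $\xi$, $\eta$ throughout the regime \eqref{case2}: in particular, verifying carefully that $\kappa>0$ (true by \eqref{ipalpha}) gives genuine room for the strict inequality $\tau\alpha_+-R_{s,+}>0$ after the cumulative perturbations $Cs_0\Lambda_0$ from $s\ne0$ (Lemma \ref{s}) and $C(|y|+s)\Lambda_0$ from $y\ne0$ (Lemma \ref{y}), which forces the simultaneous smallness conditions $s\le s_2$ on the perturbation parameter and the shrinking of the support to $[0,1/C]$. A secondary delicate point is that, unlike the first case, $\Lambda_1$ and $\Lambda_0$ are comparable here (by \eqref{27p}) but $\tau$ is not necessarily comparable to $\Lambda_0$ from below with a $\lambda_0$-only constant — it is from \eqref{case2} only up to $s_0^{-1}$ — so one must be attentive to keep every $\tau$-gain expressed through $R_{s,+}\gtrsim\Lambda_1$ rather than through $\tau\gtrsim\Lambda_0$; once that discipline is maintained the argument is a routine adaptation of Lemma \ref{lem4.2}, and the $(-)$-side estimate will follow by the mirror-image computation in the next lemma.
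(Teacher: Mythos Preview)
Your approach is correct and essentially identical to the paper's: the same energy computation on $\omega_{s,+}=a_{nn}^+F_{s,+}\hat v_+$ for \eqref{4.53}, then on $\hat v_+$ for \eqref{4.54}, with the key positivity $\tau\alpha_+-R_{s,+}\gtrsim\tau$ coming from the lower bound in \eqref{case2} and \eqref{12-15bis}, and the passage to $P_{s,+}$ via \eqref{6-27p}. Two small corrections: the identity $\omega_{s,+}(\xi,\eta,0)=V_+(\xi,\eta)+a_{nn}^+(0)\sqrt{\zeta_{s,+}(\xi,\eta,0)}\hat v_+(\xi,\eta,0)$ holds \emph{exactly} (the $\tau sT_+(\gamma,0)$ piece is already inside $V_+$ via $iT_+(\xi+i\tau s\gamma,0)$), and your caveat that $\tau\not\gtrsim\Lambda_0$ with a $\lambda_0$-only constant is mistaken --- the lower bound $\tau\ge R_{0,+}(\xi,\eta,0)/((1-\kappa)\alpha_+)$ together with \eqref{stimeRzer} gives precisely $\Lambda_0\lesssim\tau$ (this is \eqref{27p}), which the paper uses freely.
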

\begin{proof}
Define
\begin{equation*}%\label{omegas+}
\omega_{s,+}(\xi,\eta,y)=a_{nn}^+(y)F_{s,+}\hat{v}_+(\xi,\eta,y).%,\quad	\omega_{0,-}(\xi,\eta,y)=a_{nn}^-(y)E_{0,-}\hat{v}_-(\xi,\eta,y).
\end{equation*}
For sake of shortness we omit arguments unless they are necessary.

We have, by integration by parts,
\begin{equation*}%\label{1-16}
	\begin{aligned}
\left\|E_{s,+}\omega_{s,+}\right\|^2_{L^2(\mathbb{R}_{+})}=&\int_0^{+\infty}\!\!\!\!\left|\left(\partial_y+i(T_{+}(\xi,y)-J_ {s,+})\right)\omega_{s,+}-\left(\tau\alpha_++\tau\beta y+R_{s,+}+\tau s T_+(\gamma,y)\right)\omega_{s,+}\right|^2dy\\
=& \int_0^{+\infty}\!\!\!\!\left|\partial_y\omega_{s,+}+i(T_{+}(\xi,y)-J_ {s,+})\omega_{s,+}\right|^2\\
&+\int_0^{+\infty}\!\!\!\!\left[\tau\alpha_++\tau\beta y+R_{s,+}+\tau s T_+(\gamma,y)\right]^2|\omega_{s,+}|^2dy\\
&-2\Re\int_0^{+\infty}\!\!\!\!\left(\partial_y\omega_{s,+}+i(T_{+}(\xi,y)-J_ {s,+})\omega_{s,+}\right)\left(
\tau\alpha_++\tau\beta y+R_{s,+}+\tau s T_+(\gamma,y)\right)\overline{\omega}_{s,+}\,dy\\
\geq& \int_0^{+\infty}\!\!\!\! \left\{\left[\tau\alpha_++\tau\beta y+R_{s,+}+\tau s T_+(\gamma,y)\right]^2+
\tau\beta +\partial_yR_{s,+}+\tau s \partial_yT_+(\gamma,y)\right\}|\omega_{s,+}|^2dy\\
&+(\tau\alpha_++R_{s,+}(\xi,\eta,0)+\tau s T_+(\gamma,0))|\omega_{s,+}(\xi,\eta,0)|^2.
		\end{aligned}
\end{equation*}
Since $R_{s,+}\geq 0$, for $s$ small enough
\[\tau\alpha_++\tau\beta y+R_{s,+}+\tau s T_+(\gamma,y)\geq \tau\alpha_+-Cs\tau\geq \frac{\tau\alpha_+}{2}\]
and, hence, by \eqref{5-15bis} and \eqref{case2},
\begin{equation*}%\label{2-17}
\begin{aligned}
&\left[\tau\alpha_++\tau\beta y+R_{s,+}+\tau s T_+(\gamma,y)\right]^2+
\tau\beta +\partial_yR_{s,+}+\tau s \partial_yT_+(\gamma,y)\\
& \geq \left(\frac{\tau\alpha_+}{2}\right)^2-C\Lambda_0-Cs\tau\geq \left(\frac{\tau\alpha_+}{2}\right)^2-C\tau\gtrsim
\frac{\tau^2\alpha_+^2}{8}\gtrsim \Lambda_1^2.
\end{aligned}
\end{equation*}
Moreover, for $s$ small enough
\begin{equation*}%\label{3-17}
	\tau\alpha_++R_{s,+}(\xi,\eta,0)+\tau s T_+(\gamma,0)\geq \tau\alpha_++R_{s,+}(\xi,\eta,0)-C s \tau\geq \frac{\tau\alpha_+}{2}\gtrsim \Lambda_1,
\end{equation*}
hence
\begin{equation*}%\label{4.57}
\left\|E_{s,+}\omega_{s,+}\right|^2_{L^2(\mathbb{R}_{+})}\geq\frac{\Lambda_1^2}{C}
\left\|\omega_{s,+}\right|^2_{L^2(\mathbb{R}_{+})}+\frac{\Lambda_1}{C}|\omega_{s,+}(\xi,\eta,0)|^2,
\end{equation*}
that is \eqref{4.53} because
\[\omega_{s,+}(\xi,\eta,0)=V_+(\xi,\eta)+a_{nn}^+(0)\sqrt{\zeta_{+}(\xi,\eta,0)}\hat{v}_+(\xi,\eta,0).\]
Let us now consider \eqref{4.54}. Let us write
\begin{equation}\label{1-18}
\begin{aligned}
\left\|F_{s,+}\hat{v}_{+}(\xi,\cdot)\right\|^2_{L^2(\mathbb{R}_{+})}=&
\int_0^{+\infty}\!\!\!\!\left|\left(\partial_y+i(T_{+}(\xi,y)+J_ {s,+})\right)\hat{v}_{+}-\left(\tau\alpha_++\tau\beta y-R_{s,+}+\tau s T_{+}(y,\gamma)\right)\hat{v}_{+}\right|^2dy\\
=&\int_0^{+\infty}\!\!\!\!\left|\partial_y\hat{v}_{+}+i(T_{+}(\xi,y)+J_ {s,+})\hat{v}_{+}\right|^2\\
&+ \int_0^{+\infty}\!\!\!\! \left\{\left[\tau\alpha_++\tau\beta y-R_{s,+}+\tau s T_+(\gamma,y)\right]^2+
\tau\beta -\partial_yR_{s,+}+\tau s \partial_yT_+(\gamma,y)\right\}|\hat{v}_{+}|^2dy\\
&+(\tau\alpha_+-R_{s,+}(\xi,\eta,0)+\tau s T_+(\gamma,0))|\hat{v}_{+}(\xi,\eta,0)|^2.
\end{aligned}
\end{equation}
By \eqref{case2} and \eqref{12-15bis} we have
\begin{equation}\label{2-18}
\begin{aligned}
	\tau\alpha_+-R_{s,+}(\xi,\eta,0)+\tau s T_+(\gamma,0)&=\tau\alpha_+-R_{0,+}(\xi,\eta,0)+R_{0,+}(\xi,\eta,0)-R_{s,+}(\xi,\eta,0)+\tau s T_+(\gamma,0)\\
	&\geq \tau\alpha_+-(1-\kappa)\alpha_+\tau-C s \Lambda_0-C\tau s\\
	&\geq \tau(\kappa \alpha_+-Cs)\geq \frac{\Lambda_1}{C},
	\end{aligned}
\end{equation}
for $s$ small enough and $C$ depending on $\lambda_0$ and $\alpha_\pm$ only.

Moreover, by \eqref{case2} and \eqref{12-15bis} ,
\begin{equation}\label{2-30p}
	\begin{aligned}
\tau\alpha_++\tau\beta y-R_{s,+}(\xi,\eta,y)+\tau s T_+(\gamma,y)\geq&
	\tau\alpha_++\tau\beta y-R_{0,+}(\xi,\eta,0)\\&-\left|R_{0,+}(\xi,\eta,0)-R_{s,+}(\xi,\eta,y)\right|+\tau s T_+(\gamma,y)\\
	\geq& \kappa \alpha_+\tau -C(s+y)\Lambda_0\geq \frac{\kappa\alpha_+}{2}\tau,
	\end{aligned}
\end{equation}
for positive and small enough $y$ and $s$.

Hence, by \eqref{5-15bis}, \eqref{27p} and \eqref{2-30p} we have
\begin{equation}\label{1-31p}
	\left[\tau\alpha_++\tau\beta y-R_{s,+}+\tau s T_+(\gamma,y)\right]^2+
\tau\beta -\partial_yR_{s,+}+\tau s \partial_yT_+(\gamma,y) \geq \left(\frac{\kappa\alpha_+}{2}\right)^2\tau^2+\beta\tau-C\tau\geq
\left(\frac{\kappa\alpha_+}{2}\right)^2\tau^2
\end{equation}
for $\beta\geq C$.

By putting together \eqref{1-18}, \eqref{2-18}, \eqref{1-31p} and \eqref{2-15bis}, we finally get
\begin{equation*}%\label{2-31p}
	\|F_{s,+}\hat{v}_+\|^2_{L^2(\mathbb{R}_{+})}\geq\frac{1}{C}\left\{\|\partial_y \hat{v}_+\|^2_{L^2(\mathbb{R}_{+})}+\tau^2
	\|\hat{v}_+\|^2_{L^2(\mathbb{R}_{+})}+\tau|\hat{v}_+|^2_{|_{y=0}}\right\}
\end{equation*}
 that, combined with \eqref{6-27p} and \eqref{4.53} gives \eqref{4.54}.
\end{proof}

\begin{lemma}\label{lem3.6}
Assume \eqref{case2}. There exist $C$ depending only on $\lambda_0$ and $M_0$ such that,
if $0\leq s\leq s_1$, $\beta\geq C$ and $\tau\geq C$, then we have
\begin{equation}\label{4.67}
\begin{aligned}
-\Lambda_1&|V_-(\xi,\eta)-a_{nn}^-(0)\sqrt{\zeta_{s,-}(\xi,\eta,0)}\hat{v}_-(\xi,\eta,0)|^2+\Lambda_1||a_{nn}^-(y)E_{s,-}\hat{v}_-(\xi,\eta,\cdot)||^2_{L^2(\mathbb{R}_{+})}\\
&\leq C||F_{s,-}a_{nn}^+(y)F_{s,-}\hat{v}_+(\xi,\eta,\cdot)||^2_{L^2(\mathbb{R}_{+})}
\end{aligned}
\end{equation}
and
\begin{equation}\label{4.68}
\begin{aligned}
-\Lambda_1&|V_-(\xi,\eta)+a_{nn}^-(0)\sqrt{\zeta_{s,-}(\xi,\eta,0)}\hat{v}_-(\xi,\eta,0)|^2-\Lambda_1^3|\hat{v}_-(\xi,\eta,0)|^2\\
&+\Lambda_1^3||\hat{v}_-(\xi,\eta,\cdot)||^2_{L^2(\mathbb{R}_{-})}+\Lambda_1||\partial_y\hat{v}_-(\xi,\eta,\cdot)||^2_{L^2(\mathbb{R}_{-})}\leq C||P_{s,-}\hat{v}_{-}(\xi,\eta,\cdot)||^2_{L^2(\mathbb{R}_{-})}
\end{aligned}
\end{equation}
provided ${\rm supp}(\hat{v}_-(\xi,\eta,\cdot))\subset[-\frac{\alpha_-}{2\beta},0]$.
\end{lemma}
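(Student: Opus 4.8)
The plan is to mirror the proof of Lemma \ref{lem3.5}, but with the two first-order factors interchanged, since now the relevant factorization is \eqref{3.13}, $P_{s,-}\hat v_-=F_{s,-}a_{nn}^-(y)E_{s,-}\hat v_-+\hat v_-\partial_y\big(a_{nn}^-(y)\sqrt{\zeta_{s,-}}\big)$, together with the comparison \eqref{7-27p}. First I would set $\omega_{s,-}(\xi,\eta,y)=a_{nn}^-(y)E_{s,-}\hat v_-(\xi,\eta,y)$, so that by \eqref{3.13} and \eqref{7-27p} it suffices to bound the left-hand sides of \eqref{4.67} and \eqref{4.68} by $\|F_{s,-}\omega_{s,-}\|_{L^2(\mathbb{R}_-)}^2+C\Lambda_0^2\|\hat v_-\|_{L^2(\mathbb{R}_-)}^2$, the last term being absorbed at the end.

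To prove \eqref{4.67} I would write $F_{s,-}=\big(\partial_y+i(T_-(\xi,\cdot)+J_{s,-})\big)-\big(\tau\alpha_-+\tau\beta y-R_{s,-}+\tau s\,T_-(\gamma,\cdot)\big)$, expand $\|F_{s,-}\omega_{s,-}\|_{L^2(\mathbb{R}_-)}^2$, and integrate by parts on $(-\infty,0)$; the cross term produces $\int_{\mathbb{R}_-}\partial_y\big(\tau\alpha_-+\tau\beta y-R_{s,-}+\tau s T_-(\gamma,\cdot)\big)|\omega_{s,-}|^2$ and, crucially, a boundary term with the sign \emph{opposite} to the one in Lemma \ref{lem3.5}, namely $-\big(\tau\alpha_--R_{s,-}(\xi,\eta,0)+\tau s T_-(\gamma,0)\big)|\omega_{s,-}(\xi,\eta,0)|^2$. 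Using $\tau\approx\Lambda_1$ from \eqref{27p}, $|\partial_yR_{s,-}|\le C\Lambda_0$ from \eqref{5-15bis}, the Lipschitz bound on $T_-$ and \eqref{derivt}, for $\beta$ large and $s$ small the interior coefficient is $\ge \tau\beta-C\Lambda_0-Cs\tau\ge \tau\beta/2\gtrsim\Lambda_1$, while the boundary coefficient is $\le C\Lambda_1$ by \eqref{1-15bis}. Dropping the square $\|(\partial_y+i(\cdots))\omega_{s,-}\|^2$ and rearranging gives $\Lambda_1\|\omega_{s,-}\|_{L^2(\mathbb{R}_-)}^2-C\Lambda_1|\omega_{s,-}(\xi,\eta,0)|^2\le C\|F_{s,-}\omega_{s,-}\|_{L^2(\mathbb{R}_-)}^2$, which is \eqref{4.67} once one notes from \eqref{Vpm} and the definition of $E_{s,-}$ that $\omega_{s,-}(\xi,\eta,0)=V_-(\xi,\eta)-a_{nn}^-(0)\sqrt{\zeta_{s,-}(\xi,\eta,0)}\,\hat v_-(\xi,\eta,0)$. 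Observe that, unlike in Lemma \ref{lem3.5}, the real part $\tau\alpha_-+\tau\beta y-R_{s,-}$ of the potential of $F_{s,-}$ need \emph{not} be of one sign in this frequency regime ($\alpha_-$ is small relative to $\alpha_+$ while $R_{s,-}$ can be comparable to $\Lambda_1\approx\tau$), so one only recovers the power $\Lambda_1$ coming from $\tau\beta$, not $\Lambda_1^2$; this is why \eqref{4.67} and \eqref{4.68} are weaker than \eqref{4.53} and \eqref{4.54}.

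For \eqref{4.68} I would, in parallel, estimate $\hat v_-$ in terms of $E_{s,-}\hat v_-$. Writing $E_{s,-}=\big(\partial_y+i(T_-(\xi,\cdot)-J_{s,-})\big)-\big(\tau\alpha_-+\tau\beta y+R_{s,-}+\tau s T_-(\gamma,\cdot)\big)$ and integrating by parts as before, the interior coefficient becomes $\big(\tau\alpha_-+\tau\beta y+R_{s,-}+\tau s T_-(\gamma,\cdot)\big)^2+\tau\beta+\partial_yR_{s,-}+\tau s\partial_yT_-(\gamma,\cdot)$; on the support $[-\alpha_-/(2\beta),0]$ one has $\tau\alpha_-+\tau\beta y\ge\tau\alpha_-/2$, and together with the lower bound $R_{s,-}\gtrsim\Lambda_1$ of Lemma \ref{above} this coefficient is $\gtrsim\tau^2\gtrsim\Lambda_1^2$. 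The boundary term is again the ``bad'' one, $-\big(\tau\alpha_-+R_{s,-}(\xi,\eta,0)+\tau s T_-(\gamma,0)\big)|\hat v_-(\xi,\eta,0)|^2\ge-C\Lambda_1|\hat v_-(\xi,\eta,0)|^2$. Peeling $\|\partial_y\hat v_-\|^2$ off the square $\|(\partial_y+i(\cdots))\hat v_-\|^2$ by the Cauchy--Schwarz argument of \eqref{4.28} (using $|T_-|,|J_{s,-}|\le C\Lambda_0$ and $\Lambda_0\approx\Lambda_1$), I obtain $\|E_{s,-}\hat v_-\|_{L^2(\mathbb{R}_-)}^2\gtrsim\|\partial_y\hat v_-\|_{L^2(\mathbb{R}_-)}^2+\Lambda_1^2\|\hat v_-\|_{L^2(\mathbb{R}_-)}^2-C\Lambda_1|\hat v_-(\xi,\eta,0)|^2$, hence the same with $\|\omega_{s,-}\|^2$ on the left by \eqref{1.2}. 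Multiplying by $\Lambda_1$, inserting \eqref{4.67} and \eqref{7-27p}, absorbing $\Lambda_0^2\|\hat v_-\|^2\lesssim\Lambda_1^2\|\hat v_-\|^2$ into $\frac{1}{2}\Lambda_1^3\|\hat v_-\|^2$ for $\tau$ large, and using $|\omega_{s,-}(\xi,\eta,0)|^2\le 2|V_-(\xi,\eta)+a_{nn}^-(0)\sqrt{\zeta_{s,-}(\xi,\eta,0)}\hat v_-(\xi,\eta,0)|^2+C\Lambda_1^2|\hat v_-(\xi,\eta,0)|^2$ to rewrite $\Lambda_1|\omega_{s,-}(\xi,\eta,0)|^2$ in terms of the two negative trace terms on the left of \eqref{4.68}, one arrives at \eqref{4.68}.

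I expect the main difficulty to be exactly the asymmetry just noted: the factorization \eqref{3.13} forces the ``weak'' factor $F_{s,-}$, whose real-part potential is not sign-definite in this range, to sit on the outside, so by itself it gains only one power of $\Lambda_1$; closing the estimate then requires the inner ``strong'' factor $E_{s,-}$ to restore the missing power via $R_{s,-}\gtrsim\Lambda_1$ (Lemma \ref{above}), and it requires careful bookkeeping of the signs of both boundary terms at $y=0$ (reversed with respect to the $\mathbb{R}_+$ case of Lemma \ref{lem3.5}) and of the role of the support condition ${\rm supp}(\hat v_-(\xi,\eta,\cdot))\subset[-\alpha_-/(2\beta),0]$, so that the leftover lower-order contributions can all be absorbed for $\tau$, $\beta$ large and $s$ small.
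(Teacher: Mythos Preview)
Your proposal is correct and follows essentially the same approach as the paper: define $\omega_{s,-}=a_{nn}^-E_{s,-}\hat v_-$, expand $\|F_{s,-}\omega_{s,-}\|_{L^2(\mathbb{R}_-)}^2$ and integrate by parts to obtain \eqref{4.67} (keeping only the $\tau\beta$ gain in the interior and bounding the reversed-sign boundary term by $C\Lambda_1$), then expand $\|E_{s,-}\hat v_-\|_{L^2(\mathbb{R}_-)}^2$, use the support condition together with $R_{s,-}\gtrsim\Lambda_1$ to get the $\Lambda_1^2$ interior coefficient, and combine with \eqref{4.67} and \eqref{7-27p} to reach \eqref{4.68}. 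Your explanation of why only one power of $\Lambda_1$ survives in \eqref{4.67} (the real part $\tau\alpha_-+\tau\beta y-R_{s,-}$ is not sign-definite in this regime) is a helpful remark that the paper leaves implicit.
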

\begin{proof}
Define
\begin{equation*}
	%\label{2-19}
	\omega_{s,-}(\xi,\eta,y)=a_{nn}^-(y)E_{s,-}\hat{v}_-(\xi,\eta,y).
\end{equation*}
We have
\begin{equation*}
	\begin{aligned}
	 \int_{-\infty}^0\!\!\!|F_{s,-}\omega_{s,-}|^2=&\int_{-\infty}^0\!\!\!\left|\partial_y\omega_{s,-}+i(T_-(\xi,y)+J_{s,-})\omega_{s,-}
-(\tau\alpha_-+\tau\beta y-R_{s,-}+\tau s T_-(\gamma,y))\omega_{s,-}\right|^2dy\\
	\geq& -2\Re \int_{-\infty}^0\!\!\!(\tau\alpha_-+\tau\beta y-R_{s,-}+\tau s T_-(\gamma,y))\overline{\omega}_{s,-}(\partial_y\omega_{s,-}+i(T_-(\xi,y)+J_{s,-})\omega_{s,-})dy\\
	=& -2\Re \int_{-\infty}^0\!\!\!(\tau\alpha_-+\tau\beta y-R_{s,-}+\tau s T_-(\gamma,y))\overline{\omega}_{s,-}\partial_y\omega_{s,-}dy\\
	=& -\Re \int_{-\infty}^0\!\!\!(\tau\alpha_-+\tau\beta y-R_{s,-}+\tau s T_-(\gamma,y))\partial_y\left|\omega_{s,-}\right|^2dy\\
	=&  \int_{-\infty}^0\!\!\!(\tau\beta -\partial_yR_{s,-}+\tau s \partial_yT_-(\gamma,y))\left|\omega_{s,-}\right|^2dy\\
	&-(\tau\alpha_--R_{s,-}(\xi,\eta,0)+\tau s T_-(\gamma,0))\left|\omega_{s,-}(\xi,\eta,0)\right|^2.
	\end{aligned}
\end{equation*}

By \eqref{5-15bis}, \eqref{derivt} and \eqref{27p} we have
\begin{equation*}
	\tau\beta -\partial_yR_{s,-}+\tau s \partial_yT_-(\gamma,y)\geq \tau\beta -C\Lambda_0-C\tau s\geq (\beta -C^\prime)\tau\geq \frac{\beta}{2}\tau
\end{equation*}
for $\beta$ sufficiently large,
and
\begin{equation*}
\tau\alpha_--R_{s,-}(\xi,\eta,0)+\tau s T_-(\gamma,0)\leq C\Lambda_1,
\end{equation*}
and, since
\begin{equation*}
	\omega_{s,-}(\xi,\eta,0)=V_-(\xi,\eta)-a_{nn}^-(0)\sqrt{\zeta_{s,-}(\xi,\eta,0)}\hat{v}_-(\xi,\eta,0),
\end{equation*}
we finally get \eqref{4.67}.

Let us now estimate
\begin{equation*}
	\begin{aligned} \int_{-\infty}^0\!\!\!\!\!|E_{s,-}\hat{v}_{-}|^2=&\int_{-\infty}^0\!\!\!\left|\partial_y\hat{v}_{-}
+i(T_-(\xi,y)-J_{s,-})\hat{v}_{-}-(\tau\alpha_-+\tau\beta y+R_{s,-}+\tau s T_-(\gamma,y))\hat{v}_{-}\right|^2dy\\
	=& \!\!\int_{-\infty}^0\!\!\!\!\!\left|\partial_y\hat{v}_{-}+i(T_-(\xi,y)-J_{s,-})\hat{v}_{-}\right|^2 \!\!dy
	-(\tau\alpha_-+R_{s,-}(\xi,\eta,0)+\tau s T_-(\gamma,0))\left|\hat{v}_{-}\right(\xi,\eta,0)|^2\\
	&+\int_{-\infty}^0\!\!\!\left[(\tau\alpha_-+\tau\beta y+R_{s,-}+\tau s T_-(\gamma,y))^2+\tau\beta +\partial_yR_{s,-}+\tau s \partial_yT_-(\gamma,y)\right]\left|\hat{v}_{-}\right|^2dy\\
	\end{aligned}
\end{equation*}
Then, since
if ${\rm supp}(\hat{v}_-(\xi,\eta,\cdot))\subset[-\frac{\alpha_-}{2\beta},0]$, provided $s$ small enough, we have
\begin{equation*}
(\tau\alpha_-+\tau\beta y+R_{s,-}+\tau s T_-(\gamma,y))^2+\tau\beta +\partial_yR_{s,-}+\tau s \partial_yT_-(\gamma,y)\geq C\Lambda_1^2,
\end{equation*}
and estimating the remaing terms as we did before, by \eqref{4.67} and \eqref{7-27p} we finally get
\eqref{4.68}.
\end{proof}
\begin{lemma}\label{lem3.7}
Assume \eqref{case2}. there exist $C$, $s_2\leq s_1$ depending on $\lambda_0$ and $M_0$ such that if $0\leq s\leq s_0$, $\beta\geq C$, $\tau\geq C$ and ${\rm supp}(\hat{v}(\xi,\eta,\cdot))\subset[-\frac{1}{C},\frac{1}{C}]$ then we have
\begin{equation}\label{1-34p}
\begin{aligned}
\Lambda_1\sum_{\pm}|V_{\pm}(\xi,\eta)|^2&+\Lambda_1^3\sum_{\pm}|\hat{v}_{\pm}(\xi,\eta,0)|^2+\Lambda_1^3\sum_{\pm}||\hat{v}_{\pm}(\xi,\eta,\cdot)||^2_{L^2(\mathbb{R}_{\pm})}
+\Lambda_1\sum_{\pm}||\partial_y\hat{v}_{\pm}(\xi,\eta,\cdot)||^2_{L^2(\mathbb{R}_{\pm})}\\
&\leq C\left\{\sum_{\pm}||P_{s,\pm}\hat{v}_{\pm}(\xi,\eta,\cdot)||^2_{L^2(\mathbb{R}_{\pm})}+\Lambda_1|\sigma_1(\xi,\eta)|^2+C\Lambda_1^3|\sigma_0(\xi,\eta)|^2\right\},
\end{aligned}
\end{equation}
\end{lemma}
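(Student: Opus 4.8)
The plan is to glue together Lemmas~\ref{lem3.5} and~\ref{lem3.6} through the two transmission identities \eqref{sigma0}--\eqref{sigma1}, exactly as in the first case (Lemma~\ref{lem4.3}). First I choose $C$ so large that the hypothesis ${\rm supp}(\hat{v}(\xi,\eta,\cdot))\subset[-\tfrac{1}{C},\tfrac{1}{C}]$ forces simultaneously ${\rm supp}(\hat{v}_+(\xi,\eta,\cdot))\subset[0,\tfrac{1}{C}]$, as required by Lemma~\ref{lem3.5}, and ${\rm supp}(\hat{v}_-(\xi,\eta,\cdot))\subset[-\tfrac{\alpha_-}{2\beta},0]$, as required by Lemma~\ref{lem3.6}; this is possible since $\alpha_-$ and $\beta$ have already been fixed, and enlarging $C$ further so that it dominates all constants in Lemmas~\ref{lem3.5}--\ref{lem3.6} guarantees the thresholds $s_2\le s_1$, $\beta\ge C$, $\tau\ge C$ there. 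Since \eqref{case2} is in force, both lemmas apply.

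On the ``$+$'' side everything is obtained directly. Estimate \eqref{4.54} bounds $\Lambda_1|V_+(\xi,\eta)+a_{nn}^+(0)\sqrt{\zeta_{s,+}(\xi,\eta,0)}\hat{v}_+(\xi,\eta,0)|^2+\Lambda_1^3|\hat{v}_+(\xi,\eta,0)|^2+\Lambda_1^4\|\hat{v}_+\|_{L^2(\mathbb{R}_+)}^2+\Lambda_1^2\|\partial_y\hat{v}_+\|_{L^2(\mathbb{R}_+)}^2$ by $\|P_{s,+}\hat{v}_+\|_{L^2(\mathbb{R}_+)}^2$. Using $|\zeta_{s,+}|\le C\Lambda_1^2$ from \eqref{1-15bis}, the first term may be replaced by $\Lambda_1|V_+|^2$ up to an additional multiple of $\Lambda_1^3|\hat{v}_+(\xi,\eta,0)|^2$, which is itself on the left-hand side; and since $\tau\ge C$ implies $\Lambda_1\gtrsim 1$, the exponents $\Lambda_1^4,\Lambda_1^2$ can be lowered to $\Lambda_1^3,\Lambda_1$. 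This gives the ``$+$'' half of \eqref{1-34p}:
\[
\Lambda_1|V_+|^2+\Lambda_1^3|\hat{v}_+(\cdot,\cdot,0)|^2+\Lambda_1^3\|\hat{v}_+\|_{L^2(\mathbb{R}_+)}^2+\Lambda_1\|\partial_y\hat{v}_+\|_{L^2(\mathbb{R}_+)}^2\le C\|P_{s,+}\hat{v}_+\|_{L^2(\mathbb{R}_+)}^2 .
\]

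Next I transfer the traces across the interface. By \eqref{sigma0} and \eqref{sigma1}, $\hat{v}_-(\xi,\eta,0)=\hat{v}_+(\xi,\eta,0)-\sigma_0(\xi,\eta)$ and $V_-(\xi,\eta)=V_+(\xi,\eta)-\sigma_1(\xi,\eta)$, so the bound above yields
\[
\Lambda_1|V_-|^2+\Lambda_1^3|\hat{v}_-(\cdot,\cdot,0)|^2\le C\|P_{s,+}\hat{v}_+\|_{L^2(\mathbb{R}_+)}^2+C\Lambda_1|\sigma_1|^2+C\Lambda_1^3|\sigma_0|^2 .
\]
By \eqref{1-15bis} again, the boundary term carrying the negative sign in \eqref{4.68}, namely $\Lambda_1|V_-+a_{nn}^-(0)\sqrt{\zeta_{s,-}(\xi,\eta,0)}\hat{v}_-(\xi,\eta,0)|^2$, is dominated by $2\Lambda_1|V_-|^2+C\Lambda_1^3|\hat{v}_-(\xi,\eta,0)|^2$, hence by the right-hand side just displayed. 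Moving the two negative terms of \eqref{4.68} to the right and inserting these estimates produces
\[
\Lambda_1^3\|\hat{v}_-\|_{L^2(\mathbb{R}_-)}^2+\Lambda_1\|\partial_y\hat{v}_-\|_{L^2(\mathbb{R}_-)}^2\le C\Big(\sum_\pm\|P_{s,\pm}\hat{v}_\pm\|_{L^2(\mathbb{R}_\pm)}^2+\Lambda_1|\sigma_1|^2+\Lambda_1^3|\sigma_0|^2\Big).
\]
Adding the three displayed inequalities and relabelling constants gives \eqref{1-34p}.

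The only genuinely structural point, beyond the bookkeeping of constants, is the order in which the ingredients are used: the ``$+$'' side must be controlled completely first, via Lemma~\ref{lem3.5}; its trace information is then pushed across the interface through $\sigma_0$ and $\sigma_1$; and only afterwards is Lemma~\ref{lem3.6} invoked, since the left-hand side of \eqref{4.68} carries ``wrong-sign'' boundary contributions on the ``$-$'' side that cannot be absorbed intrinsically. This is the same one-directional transfer mechanism already used in Lemma~\ref{lem4.3}, and it closes precisely because of the choice of $\alpha_+$, $\alpha_-$ in \eqref{ipalpha}.
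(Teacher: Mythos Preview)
Your proof is correct and follows exactly the route the paper intends: the paper's own proof consists of the single sentence ``The proof is the same of Lemma~\ref{lem4.3},'' and what you wrote is precisely that adaptation, with Lemmas~\ref{lem3.5} and~\ref{lem3.6} playing the role of Lemma~\ref{lem4.2}. Your closing remark on the one-directional transfer (control the ``$+$'' side first, push traces across via $\sigma_0,\sigma_1$, then absorb the wrong-sign boundary terms of \eqref{4.68}) is exactly the mechanism of Lemma~\ref{lem4.3}.
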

\begin{proof}
The proof is the same of Lemma \ref{lem4.3}.
\end{proof}

\begin{rem}
Since, by \eqref{case2}, $\frac{\Lambda_1}{\tau}\leq C$, we can write \eqref{1-34p} in the following more convenient form
\begin{eqnarray*}%\label{3-34p}
&&\Lambda_1\sum_{\pm}|V_{\pm}(\xi,\eta)|^2+\Lambda_1^3\sum_{\pm}|\hat{v}_{\pm}(\xi,\eta,0)|^2+\frac{\Lambda_1^4}{\tau}\sum_{\pm}||\hat{v}_{\pm}||^2_{L^2(\mathbb{R}_{\pm})}
+\frac{\Lambda_1^2}{\tau}\sum_{\pm}||\partial_y\hat{v}_{\pm}||^2_{L^2(\mathbb{R}_{\pm})}\nonumber\\
&&\quad\leq C\left(\sum_{\pm}||P_{s,\pm}\hat{v}_{\pm}||^2_{L^2(\mathbb{R}_{\pm})}+\Lambda_1|\sigma_1(\xi,\eta)|^2+\Lambda_1^3|\sigma_0(\xi,\eta)|^2\right),
\end{eqnarray*}
where $C$ depends on $\lambda_0$ and $M_0$ only.
\end{rem}

\subsubsection{Third case}\label{thirdcase}
In this case we assume
\begin{equation}
	\label{case3}
	\tau\leq\frac{R_{0,+}(\xi,\eta,0)}{(1-\kappa)\alpha_+}.
\end{equation}
Notice that, by \eqref{stimeRzero}, in this case condition \eqref{4-16p} is verified. Moreover
\begin{equation}
	\label{equivlambda}
	\Lambda_0\leq\Lambda_1\leq \sqrt{1+C_1^2}\Lambda_0.
\end{equation}

\begin{lemma}\label{lem3.8}
Assume \eqref{case3}. There exists a positive constant $C$ depending on $\lambda_0$, $M_0$ such that, if $0\leq s\leq s_1$ and $\tau\geq C$, then
\begin{equation}\label{3-35p}
	\Lambda_1\left|\omega_+(\xi,\eta,0)\right|^2+\Lambda_1^2\int_0^{+\infty}\left|\omega_+(\xi,\eta,y)\right|^2dy+
	\int_0^{+\infty}\left|\partial_y\omega_+(\xi,\eta,y)\right|^2dy\leq C\left\|E_{s,+}\omega_+(\xi,\eta,\cdot)\right\|^2_{L^2(\RR^+)}.
\end{equation}
Furthermore, if ${\rm supp}(\hat{v}_-(\xi,\eta,\cdot))\subset[-\frac{1}{C},0]$, then
\begin{equation}\label{4-35p}
	\Lambda_1^2\int_{-\infty}^0\left|\hat{v}_-(\xi,\eta,y)\right|^2dy+
	\int_{-\infty}^0\left|\partial_y\hat{v}_-(\xi,\eta,y)\right|^2dy\leq C\left\|E_{s,-}\hat{v}_-(\xi,\eta,\cdot)\right\|^2_{L^2(\RR^+)}+C\Lambda_1\left|\hat{v}_-(\xi,\eta,0)\right|^2.
\end{equation}
\end{lemma}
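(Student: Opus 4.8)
The plan is to prove both inequalities as one--sided a priori estimates for the first order operator $E_{s,\pm}$, via integration by parts in $y$; the mechanism of the third case is that the zeroth order coefficient of $E_{s,\pm}$ is now forced to be positive and of size $\Lambda_1$ by the term $R_{s,\pm}$, and \emph{not} by the weight, because here \eqref{4-16p} holds and hence \eqref{1-15} is available. First I would write, exactly as in the proof of Lemma~\ref{lem3.5},
\[
E_{s,\pm}=\bigl(\der_y+i\Theta_\pm\bigr)-\Psi_\pm,\qquad
\Theta_\pm:=T_\pm(\xi,y)-J_{s,\pm},\qquad
\Psi_\pm:=\tau\varphi'_\pm(y)+R_{s,\pm}+\tau s\,T_\pm(\gamma,y),
\]
noting that $\Theta_\pm$ and $\Psi_\pm$ are real and that all the functions involved are compactly supported in $y$, so that no boundary terms arise at $\pm\infty$.

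For \eqref{3-35p} I would expand $\|E_{s,+}\omega_+\|^2_{L^2(\mathbb{R}_+)}$; the cross term $-2\Re\langle i\Theta_+\omega_+,\Psi_+\omega_+\rangle$ vanishes since $\Theta_+\Psi_+$ is real, and $-2\Re\langle\der_y\omega_+,\Psi_+\omega_+\rangle=-\int_0^{+\infty}\Psi_+\der_y|\omega_+|^2$ is integrated by parts, yielding
\[
\|E_{s,+}\omega_+\|^2_{L^2(\mathbb{R}_+)}=\|(\der_y+i\Theta_+)\omega_+\|^2_{L^2(\mathbb{R}_+)}+\int_0^{+\infty}\!\!\bigl(\Psi_+^2+\der_y\Psi_+\bigr)|\omega_+|^2\,dy+\Psi_+(0)|\omega_+(0)|^2.
\]
Since $\tau\varphi'_+(y)=\tau(\alpha_++\beta y)\ge0$ for $y>0$, \eqref{1-15}, \eqref{derivt} and \eqref{equivlambda} give $\Psi_+\ge R_{s,+}-Cs\tau\ge\Lambda_1/C$ for $s$ small enough; moreover $|\der_y\Psi_+|\le C\Lambda_1$ by \eqref{5-15bis}, so $\Psi_+^2+\der_y\Psi_+\ge\Lambda_1^2/C$ once $\tau\ge C$, and $\Psi_+(0)\ge\Lambda_1/C$. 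This gives the first two terms of \eqref{3-35p} together with the boundary term; for the $\der_y\omega_+$ term I would estimate $\int_0^{+\infty}|\der_y\omega_+|^2\le 2\|(\der_y+i\Theta_+)\omega_+\|^2_{L^2(\mathbb{R}_+)}+2\int_0^{+\infty}|\Theta_+|^2|\omega_+|^2$ and absorb $\int|\Theta_+|^2|\omega_+|^2\le C\Lambda_1^2\int|\omega_+|^2$ (using $|\Theta_+|\le C\Lambda_1$ from \eqref{2-15bis} and $|\xi|\le\Lambda_1$) into the already controlled quantity $\|E_{s,+}\omega_+\|^2_{L^2(\mathbb{R}_+)}$.

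For \eqref{4-35p} I would run the same computation on $(-\infty,0)$; the only difference is that the integrated boundary term now appears with the opposite sign,
\[
\|E_{s,-}\hat v_-\|^2_{L^2(\mathbb{R}_-)}+\Psi_-(0)|\hat v_-(0)|^2=\|(\der_y+i\Theta_-)\hat v_-\|^2_{L^2(\mathbb{R}_-)}+\int_{-\infty}^{0}\!\!\bigl(\Psi_-^2+\der_y\Psi_-\bigr)|\hat v_-|^2\,dy,
\]
and since $\Psi_-(0)\le C\Lambda_1$ by \eqref{2-15bis} (and $\tau\le\Lambda_1$), this produces exactly the extra term $C\Lambda_1|\hat v_-(0)|^2$ on the right of \eqref{4-35p}. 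To obtain $\Psi_-\ge\Lambda_1/C$ on the support I would use the hypothesis ${\rm supp}(\hat v_-(\xi,\eta,\cdot))\subset[-\tfrac1C,0]$: although $\tau\varphi'_-(y)=\tau(\alpha_-+\beta y)$ may be negative, for $|y|\le\tfrac1C$ with $\tfrac1C$ small relative to $\tfrac1\beta$ one has $\tau\beta|y|\le\tfrac12 R_{s,-}$ (using $\tau\le C_1\Lambda_0$ and \eqref{1-15}), so that $\tau\varphi'_-(y)+R_{s,-}\ge\tfrac12 R_{s,-}\ge\Lambda_1/C$ and hence $\Psi_-\ge\Lambda_1/C$ for $s$ small. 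From here the bootstrap is identical to the one above: $\Lambda_1^2\int|\hat v_-|^2$ is bounded by $\|E_{s,-}\hat v_-\|^2_{L^2(\mathbb{R}_-)}+\Lambda_1|\hat v_-(0)|^2$, and then so is $\int|\der_y\hat v_-|^2$.

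I expect the only genuine bookkeeping to be the sign of the integrated boundary term --- helpful on the $+$ side, of the wrong sign on the $-$ side, which is precisely why \eqref{4-35p} carries the additional $\Lambda_1|\hat v_-(0)|^2$ --- together with fixing the $y$--support on the negative side small enough, in terms of $\beta$, so that $\tau\varphi'_-$ cannot overcome $R_{s,-}$.
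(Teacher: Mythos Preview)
Your proof is correct and follows essentially the same approach as the paper: the identical integration-by-parts identity \eqref{1-21}, the same lower bound $\Psi_+^2+\partial_y\Psi_+\gtrsim\Lambda_1^2$ via \eqref{1-15} and \eqref{5-15bis}, and the same ``usual trick'' to recover $\int|\partial_y\omega_+|^2$. Your treatment of \eqref{4-35p} is in fact more explicit than the paper's (which merely says the argument is ``essentially the same''), but it is exactly what the support hypothesis on $\hat v_-$ is designed to accomplish.
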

\begin{proof}
We have, integrating by parts,
\begin{equation}\label{1-21}
	\begin{aligned}
&\left\|E_{s,+}\omega_{s,+}\right\|^2_{L^2(\mathbb{R}_{+})}=\\& \int_0^{+\infty}\!\!\!\!\left|\partial_y\omega_{s,+}+i(T_{+}(\xi,y)-J_ {s,+})\omega_{s,+}\right|^2\\
&+\int_0^{+\infty}\!\!\!\!\left[(\tau\alpha_++\tau\beta y+R_{s,+}+\tau s T_+(\gamma,y))^2+\tau\beta +\partial_yR_{s,+}+\tau s \partial_yT_+(\gamma,y)\right]|\omega_{s,+}|^2dy\\
&+(\tau\alpha_++R_{s,+}(\xi,\eta,0)+\tau s T_+(\gamma,0))|\omega_{s,+}(\xi,\eta,0)|^2.
		\end{aligned}
\end{equation}
By  \eqref{1-15} and \eqref{5-15bis}, for $s$ small, we can write
\begin{equation}\label{3-36p}
\begin{aligned}
(\tau\alpha_++&\tau\beta y+R_{s,+}+\tau s T_+(\gamma,y))^2+\tau\beta +\partial_yR_{s,+}+\tau s \partial_yT_+(\gamma,y)\geq
(R_{s,+})^2-C\Lambda_0\\&\geq \left(\frac{1}{C}-\frac{C}{\Lambda_0}\right)\Lambda_0^2\geq
	\left(\frac{1}{C}-\frac{C}{\tau}\right)\Lambda_0^2	\geq \frac{1}{2C}\Lambda_1^2,
	\end{aligned}
	\end{equation}
for $\tau\geq 2C^2$.

The term $ \int_0^{+\infty}\!\!\!\!\left|\partial_y\omega_{s,+}+i(T_{+}(\xi,y)-J_ {s,+})\omega_{s,+}\right|^2$ can be estimated from velow by the usual trick. Therefore \eqref{1-21} and \eqref{3-36p} yield \eqref{3-35p}.
The proof of \eqref{4-35p} is essentially the same provided ${\rm supp}(\hat{v}_-(\xi,\eta,\cdot))\subset[-\frac{1}{C},0]$.
\end{proof}

\begin{lemma}\label{lem3.9} Assume \eqref{case3}. There exists a positive constant $C$, depending on $\lambda_0,M_0$, such that if $0\leq s\leq s_1$, $\tau\geq C$, and $\beta\geq C$, then, for ${\rm supp}(\hat{v}_+(\xi,\cdot))\subset[0,\frac{1}{\beta}]$, we have that
\begin{equation}\label{1-37p}
\frac{\Lambda_1^2}{\tau}\int^{\infty}_0|\hat{v}_+(\xi,\eta,y)|^2dy+\frac{1}{\tau}\int_0^{\infty}|\partial_y\hat{v}_+(\xi,\eta,y)|^2dy
\leq C\left(||F_{s,+}\hat{v}_{+}(\xi,\eta,\cdot)||^2_{L^2(\mathbb{R}_{+})}+\Lambda_1|\hat{v}_+(\xi,\eta,0)|^2\right).
\end{equation}
\end{lemma}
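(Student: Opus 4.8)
The plan is to run the one--dimensional energy identity attached to the first order factor $F_{s,+}$ and extract from it, with the announced $\tau^{-1}$ loss, a lower bound for $\int_0^\infty|\hat v_+(\xi,\eta,y)|^2\,dy$; the $\partial_y$--term in \eqref{1-37p} is then purely algebraic. Throughout we are in the third case, so \eqref{4-16p} holds and the estimates of the ``useful estimates'' subsection are available; in particular $\Lambda_0$ and $\Lambda_1$ are comparable by \eqref{equivlambda}, and $\tau\le\Lambda_1$.

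First I would write
\[
F_{s,+}\hat v_+=(\partial_y+im)\hat v_++\tilde g\,\hat v_+,\qquad m:=T_+(\xi,y)+J_{s,+},\quad \tilde g:=R_{s,+}-\tau\varphi_+'(y)-\tau s\,T_+(\gamma,y),
\]
where $m$ is real and $\tilde g$ is the real part of the zeroth order coefficient of $F_{s,+}$. Since $\hat v_+(\xi,\eta,\cdot)$ is smooth and compactly supported in $[0,\tfrac1\beta]$, expanding $|F_{s,+}\hat v_+|^2=|(\partial_y+im)\hat v_+|^2+\tilde g^2|\hat v_+|^2+\tilde g\,\partial_y|\hat v_+|^2$, integrating over $\mathbb{R}_+$ and integrating by parts the last term gives
\[
\int_0^\infty|F_{s,+}\hat v_+|^2\,dy=\int_0^\infty|(\partial_y+im)\hat v_+|^2\,dy+\int_0^\infty(\tilde g^2-\partial_y\tilde g)|\hat v_+|^2\,dy-\tilde g(\xi,\eta,0)|\hat v_+(\xi,\eta,0)|^2 .
\]
Dropping the first (nonnegative) term and bounding $|\tilde g(\xi,\eta,0)|\le C\Lambda_1$ (from \eqref{2-15bis} together with $\tau\le\Lambda_1$ and $\varphi_+'(0)=\alpha_+$), the whole lemma is reduced to the pointwise lower bound
\[
\tilde g(\xi,\eta,y)^2-\partial_y\tilde g(\xi,\eta,y)\ \ge\ \frac{c}{\tau}\,\Lambda_1^2,\qquad 0\le y\le\tfrac1\beta ,
\]
for some $c=c(\lambda_0,M_0)>0$. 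This is the heart of the matter and the place where $\beta$ is fixed large and $s$ small.

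To obtain it I would argue by a dichotomy on the size of $\Lambda_0$ relative to $\tau$. By \eqref{5-15bis} and the Lipschitz bound $|\partial_yT_+(\gamma,\cdot)|\le C$, one has $\partial_y\tilde g=\partial_yR_{s,+}-\tau\beta-\tau s\,\partial_yT_+(\gamma,\cdot)\le C\Lambda_0-\tau\beta+C\tau s$. If $\Lambda_0\le\tfrac{\tau\beta}{2C}$ then, for $\beta$ large, $-\partial_y\tilde g\ge\tfrac{\tau\beta}{2}$, while in this regime $\Lambda_1^2/\tau\le\tau\beta^2$; hence $\tilde g^2-\partial_y\tilde g\ge\tfrac{\tau\beta}{2}\ge\tfrac1{2\beta}\,\Lambda_1^2/\tau$. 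If instead $\Lambda_0>\tfrac{\tau\beta}{2C}$, then $\tau$ is small compared with $\Lambda_0$; since $\varphi_+'(y)=\alpha_++\beta y\le\alpha_++1$ on $[0,\tfrac1\beta]$ and $R_{s,+}\ge\tfrac1C\Lambda_1\ge\tfrac1C\Lambda_0$ by \eqref{1-15}, taking $\beta$ large enough forces $\tau\varphi_+'(y)+\tau s|T_+(\gamma,y)|\le\tfrac12R_{s,+}$, whence $\tilde g\ge\tfrac12R_{s,+}\gtrsim\Lambda_1$; combined with $|\partial_y\tilde g|\le C(\Lambda_0+\tau\beta)\le C'\Lambda_1$ this gives $\tilde g^2-\partial_y\tilde g\gtrsim\Lambda_1^2\ge\Lambda_1^2/\tau$ once $\tau\ge\tau_0(\lambda_0,M_0)$. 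Taking $c$ to be the minimum of the two constants produced proves the claimed bound.

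Feeding this into the energy identity gives $\tfrac{\Lambda_1^2}{\tau}\int_0^\infty|\hat v_+|^2\,dy\le C\big(\|F_{s,+}\hat v_+(\xi,\eta,\cdot)\|_{L^2(\mathbb{R}_+)}^2+\Lambda_1|\hat v_+(\xi,\eta,0)|^2\big)$, which is the first half of \eqref{1-37p}. For the second half I would use $\partial_y\hat v_+=F_{s,+}\hat v_+-(\tilde g+im)\hat v_+$ and $|\tilde g|+|m|\le C\Lambda_1$ to obtain $\int_0^\infty|\partial_y\hat v_+|^2\,dy\le 2\|F_{s,+}\hat v_+(\xi,\eta,\cdot)\|_{L^2(\mathbb{R}_+)}^2+C\Lambda_1^2\int_0^\infty|\hat v_+|^2\,dy$, then divide by $\tau\ge1$ and insert the bound just obtained. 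The main obstacle, as already indicated, is the lower bound for $\tilde g^2-\partial_y\tilde g$: in contrast with the first and second cases, here $\tilde g$ really is of indeterminate sign and may vanish, so no lower bound better than $\Lambda_1^2/\tau$ is available --- which is exactly why \eqref{1-37p} carries the factor $\tau^{-1}$ --- and it is essential that the convexity parameter $\beta$ be chosen large (depending on $\lambda_0,M_0$) so that $\partial_y\tilde g<0$ in the range $\Lambda_0\lesssim\tau$, while the ``elliptic'' term $R_{s,+}^2$ dominates in the range $\Lambda_0\gg\tau$.
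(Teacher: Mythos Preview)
Your proof is correct and follows essentially the same route as the paper: the same energy identity for $F_{s,+}$, the same reduction to a pointwise lower bound $\tilde g^2-\partial_y\tilde g\ge c\Lambda_1^2/\tau$, and the same dichotomy between the regime where $-\partial_y\tilde g\ge\tfrac{\tau\beta}{2}$ dominates and the regime where $\tilde g\gtrsim R_{s,+}\gtrsim\Lambda_1$ dominates (your threshold $\Lambda_0\lessgtr\tfrac{\tau\beta}{2C}$ is just a reparametrization of the paper's $\tau\lessgtr\tfrac{\Lambda_1}{2C_*(\alpha_++2)}$). The only cosmetic difference is that the paper recovers the $\|\partial_y\hat v_+\|^2$ term by keeping $\varepsilon/\tau$ times $\int_0^\infty|(\partial_y+im)\hat v_+|^2\,dy$ inside the energy identity, whereas you drop that term and recover $\partial_y\hat v_+$ afterward by the algebraic substitution $\partial_y\hat v_+=F_{s,+}\hat v_+-(\tilde g+im)\hat v_+$; both are standard and equivalent.
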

\begin{proof}
We have, integrating by parts,
\begin{equation}\label{1-22}
	\begin{aligned}
&\left\|F_{s,+}\hat{v}_+\right\|^2_{L^2(\mathbb{R}_{+})}=\\& \int_0^{+\infty}\!\!\!\!\left|\partial_y\hat{v}_++i(T_{+}(\xi,y)+J_ {s,+})\hat{v}_+\right|^2\\
&+\int_0^{+\infty}\!\!\!\!\left[(-\tau\alpha_+-\tau\beta y+R_{s,+}-\tau s T_+(\gamma,y))^2+\tau\beta -\partial_yR_{s,+}+\tau s \partial_yT_+(\gamma,y)\right]|\hat{v}_+|^2dy\\
&+(\tau\alpha_+-R_{s,+}(\xi,\eta,0)+\tau s T_+(\gamma,0))|\hat{v}_+(\xi,\eta,0)|^2.
		\end{aligned}
\end{equation}
Notice that, for $0\leq y\leq \frac{1}{\beta}$ and $0\leq s \leq 1/C$,
\[-\tau\alpha_+-\tau\beta y+R_{s,+}-\tau s T_+(\gamma,y)\geq R_{s,+}-\tau(\alpha_++1+Cs)
\geq R_{s,+}-\tau(\alpha_++2)\geq \frac{\Lambda_1}{C_*}-\tau(\alpha_++2).
\]
We want now to estimate
\[p:=(-\tau\alpha_+-\tau\beta y+R_{s,+}-\tau s T_+(\gamma,y))^2+\tau\beta -\partial_yR_{s,+}+\tau s \partial_yT_+(\gamma,y).\]
We distinguish two cases:

if $\tau\leq \frac{\Lambda_1}{2C_*(\alpha_++2)}$, we can write
\[p\geq \frac{\Lambda_1^2}{(2C_*)^2}+\tau\beta-\overline{C}\Lambda_1\geq \Lambda_1^2\left(\frac{1}{(2C_*)^2}-\frac{\overline{C}}{\Lambda_1}\right)\geq
\Lambda_1^2\left(\frac{1}{(2C_*)^2}-\frac{\overline{C}}{\tau}\right)\geq \frac{\Lambda_1^2}{2(2C_*)^2}\geq \frac{\Lambda^2_1}{2\tau (2C_*)^2} \]
if $\tau\geq 2(2C_*)^2\overline{C}$.

On the other hand, if $\tau\geq \frac{\Lambda_1}{2C_*(\alpha_++2)}$, then
\[p\geq \tau\beta -\overline{C}\Lambda_1\geq \Lambda_1\left(\frac{\beta}{2C_*(\alpha_++1)}-\overline{C}\right)\geq \frac{\beta}{4C_*(\alpha_++1)}\Lambda_1
\geq  \frac{\beta}{4C_*(\alpha_++1)}\frac{\Lambda_1^2}{C\tau}\]
for $\beta\geq 4C_*\overline{C}(\alpha_++1)$ and since $\frac{\Lambda_1}{\tau}\leq C$.

In each of these two cases we have
\begin{equation}\label{1-38p}
p\geq \frac{1}{C}\frac{\Lambda_1^2}{\tau}.
\end{equation}
By \eqref{1-22} and \eqref{1-38p} and by the fact that $\tau\alpha_+-R_{s,+}\geq -C\Lambda_1$, we have
	\begin{equation*}%\label{4-39p}
	\begin{aligned}
\left\|F_{s,+}\hat{v}_+\right\|^2_{L^2(\mathbb{R}_{+})}&\geq \int_0^{+\infty}\!\!\!\!\left|\partial_y\hat{v}_++i(T_{+}(\xi,y)+J_ {s,+})\hat{v}_+\right|^2dy+\frac{\Lambda_1^2}{C\tau}\int_0^{+\infty}\!\!\!\!|\hat{v}_+|^2dy-C\Lambda_1|\hat{v}_+(\xi,\eta,0)|^2\\
&\geq \frac{\varepsilon}{\tau}\int_0^{+\infty} \!\!\!\!\left|\partial_y\hat{v}_++i(T_{+}(\xi,y)+J_ {s,+})\hat{v}_+\right|^2dy+\frac{\Lambda_1^2}{C\tau}\int_0^{+\infty}\!\!\!\!|\hat{v}_+|^2dy-C\Lambda_1|\hat{v}_+(\xi,\eta,0)|^2\\
&\geq \frac{\varepsilon}{2\tau}\int_0^{+\infty} \left|\partial_y\hat{v}_+\right|dy-\frac{2\varepsilon}{\tau}C\Lambda_1^2\int_0^{+\infty}\left|\hat{v}_+\right|^2dy+\frac{\Lambda_1^2}{C\tau}\int_0^{+\infty}\!\!\!\!|\hat{v}_+|^2dy-C\Lambda_1|\hat{v}_+(\xi,\eta,0)|^2\\
&\geq \frac{\varepsilon}{2\tau}\int_0^{+\infty} \left|\partial_y\hat{v}_+\right|dy+\frac{\Lambda_1^2}{2C\tau}\int_0^{+\infty}\!\!\!\!|\hat{v}_+|^2dy-C\Lambda_1|\hat{v}_+(\xi,\eta,0)|^2
		\end{aligned}
\end{equation*}
if $\varepsilon$ is small enough.\end{proof}
\begin{lemma}\label{lem3.10} Assume \eqref{case3}. There exist positive constants $C$, $s_3$ depending on $\lambda_0$, $M_0$, such that if $0\leq s\leq s_3$, and for ${\rm supp} \omega_{s,-}(\xi,\eta,\cdot)\subset[-1/C,0]$, we have that
\begin{equation}\label{1-40p}
\Lambda_1|\omega_{s,-}(\xi,\eta,0)|+\Lambda_1^2\int_{-\infty}^0|\omega_{s,-}(\xi,\eta,y)|^2dy\leq
C\|F_{s,-}\omega_{s,-}(\xi,\eta,\cdot)||_{L^2(\mathbb{R}_{-})}.
\end{equation}
\end{lemma}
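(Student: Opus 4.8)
The plan is to imitate the arguments of Lemmas~\ref{lem3.8} and \ref{lem3.9}, now for the operator $F_{s,-}$ acting on $\omega_{s,-}$. Writing $g(y):=T_-(\xi,y)+J_{s,-}(\xi,\eta,y)$ and
\[
c(y):=\tau\alpha_-+\tau\beta y-R_{s,-}(\xi,\eta,y)+\tau s\,T_-(\gamma,y),
\]
one has $F_{s,-}\omega_{s,-}=\partial_y\omega_{s,-}+ig(y)\,\omega_{s,-}-c(y)\,\omega_{s,-}$. I would expand $\|F_{s,-}\omega_{s,-}\|^2_{L^2(\mathbb{R}_-)}$, observe that the $ig$-term gives no contribution to the real part of the mixed term, integrate by parts the remaining mixed term $-\int_{-\infty}^0 c(y)\partial_y|\omega_{s,-}|^2\,dy$, and use ${\rm supp}\,\omega_{s,-}(\xi,\eta,\cdot)\subset[-1/C,0]$ to drop the endpoint at $-\infty$; this yields the identity
\[
\|F_{s,-}\omega_{s,-}\|^2_{L^2(\mathbb{R}_-)}=\int_{-\infty}^0|\partial_y\omega_{s,-}+ig\,\omega_{s,-}|^2dy+\int_{-\infty}^0(c(y)^2+c'(y))|\omega_{s,-}|^2dy-c(0)\,|\omega_{s,-}(\xi,\eta,0)|^2,
\]
with $c'(y)=\tau\beta-\partial_yR_{s,-}+\tau s\,\partial_yT_-(\gamma,y)$.

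The crux is to prove
\[
-c(y)=R_{s,-}(\xi,\eta,y)-\tau\alpha_-+\tau\beta|y|-\tau s\,T_-(\gamma,y)\ge\frac1C\Lambda_1\qquad\text{for all }y\in[-1/C,0],
\]
for $C$ large, $s\le s_3$ small and $\tau\ge\tau_0$ large; this simultaneously controls the bulk coefficient via $c(y)^2$ and the sign of the boundary term, since it forces $-c(0)\ge\frac1C\Lambda_1$. Using $|\tau s\,T_-(\gamma,y)|\le Cs\tau$ (a consequence of \eqref{derivt}), I would split into two regimes. If $R_{s,-}(\xi,\eta,y)\ge 2\tau\alpha_-$, then for $s_3$ small $-c(y)\ge\frac12 R_{s,-}(\xi,\eta,y)\ge\frac1C\Lambda_1$ by \eqref{1-15}. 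If $R_{s,-}(\xi,\eta,y)<2\tau\alpha_-$, then \eqref{1-15}, \eqref{4-16p} and \eqref{equivlambda} force $\tau\asymp\Lambda_0\asymp\Lambda_1$, so $(|y|+s)\Lambda_0+s\tau\le C'(|y|+s)\tau$; here the transmission structure enters, exactly as $\kappa>0$ does in the second case: Case~\eqref{case3} gives $R_{0,+}(\xi,\eta,0)\ge(1-\kappa)\alpha_+\tau$, hence by \eqref{L} and \eqref{ipalpha}
\[
R_{0,-}(\xi,\eta,0)\ge\frac{R_{0,+}(\xi,\eta,0)}{L}\ge\frac{(1-\kappa)\alpha_+}{L}\tau=\Big(\frac{\alpha_+}{2L\alpha_-}+\frac12\Big)\alpha_-\tau=:(1+2\kappa')\alpha_-\tau,
\]
with $2\kappa'=\frac{\alpha_+}{2L\alpha_-}-\frac12>0$ (the last equality being the bookkeeping behind $\kappa=\frac12(1-L\alpha_-/\alpha_+)$); then \eqref{12-15bis} gives $-c(y)\ge(1+2\kappa')\alpha_-\tau-\tau\alpha_--C''(|y|+s)\tau\ge\kappa'\alpha_-\tau\gtrsim\Lambda_1$ once $1/C$ and $s_3$ are small enough. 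In both regimes $c(y)^2\ge\frac1{C^2}\Lambda_1^2$ and $-c(0)\ge\frac1C\Lambda_1$.

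Finally I would absorb $c'(y)$: by \eqref{5-15bis}, \eqref{derivt}, and the bounds $\Lambda_0\le\Lambda_1$, $\tau\le\Lambda_1$, one has $|c'(y)|\le\tau\beta+C\Lambda_0+Cs\tau\le C'''\Lambda_1$ (with $\beta$ a fixed constant), hence $c(y)^2+c'(y)\ge\frac1{2C^2}\Lambda_1^2$ provided $\tau\ge\tau_0$ is large. Discarding the nonnegative term $\int_{-\infty}^0|\partial_y\omega_{s,-}+ig\,\omega_{s,-}|^2dy$ and inserting these bounds into the identity of the first step yields
\[
\|F_{s,-}\omega_{s,-}\|^2_{L^2(\mathbb{R}_-)}\ge\frac1C\Big(\Lambda_1^2\int_{-\infty}^0|\omega_{s,-}(\xi,\eta,y)|^2dy+\Lambda_1\,|\omega_{s,-}(\xi,\eta,0)|^2\Big),
\]
which is \eqref{1-40p}. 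I expect the only genuinely delicate point to be the step converting the Case~\eqref{case3} inequality into the strict gap $R_{0,-}(\xi,\eta,0)\ge(1+2\kappa')\alpha_-\tau$ and the two-regime split it triggers — this is where the interface hypothesis \eqref{ipalpha} on $\alpha_+/\alpha_-$ is essential; everything else is the standard integration-by-parts identity together with the perturbative estimates of Lemmas~\ref{zetabelow}--\ref{y}.
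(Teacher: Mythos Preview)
Your argument is correct. The integration-by-parts identity, the sign analysis of $-c(y)$ via the two regimes, and the absorption of $c'(y)$ for large $\tau$ all go through as you describe, and the key gap $R_{0,-}(\xi,\eta,0)-\tau\alpha_-\gtrsim\Lambda_0$ is exactly the content of \eqref{ipalpha} under \eqref{case3}.

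The paper, however, takes a shorter route here that is worth noting because it avoids both your regime split and the $c'$-absorption. Rather than expanding $\|F_{s,-}\omega_{s,-}\|^2$, it tests $F_{s,-}\omega_{s,-}$ against $\Lambda_0\bar{\omega}_{s,-}$ and takes the real part:
\[
\Re\int_{-\infty}^0\Lambda_0(F_{s,-}\omega_{s,-})\bar{\omega}_{s,-}\,dy
=\frac{\Lambda_0}{2}|\omega_{s,-}(\xi,\eta,0)|^2+\Lambda_0\int_{-\infty}^0\big(R_{s,-}-\tau\alpha_--\tau\beta y-\tau sT_-(\gamma,y)\big)|\omega_{s,-}|^2\,dy.
\]
The boundary term already comes with the right sign and the right power of $\Lambda_0$, and the bulk coefficient is precisely $-c(y)$, which the paper bounds below by $\Lambda_0/C$ in one stroke via
\[
R_{0,-}(\xi,\eta,0)-\tau\alpha_-\ge\frac{1}{L}\Big[R_{0,+}(\xi,\eta,0)-\tau\alpha_-L\Big]\ge\frac{\kappa}{1-\kappa}\cdot\frac{R_{0,+}(\xi,\eta,0)}{L}\ge\frac{\Lambda_0}{C},
\]
using \eqref{case3}, \eqref{L} and \eqref{stimeRzer}; the perturbation \eqref{12-15bis} then handles $y\neq0$ and $s>0$. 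This multiplier approach is linear in $F_{s,-}\omega_{s,-}$, so no $c'$ appears and no squaring is needed. Your method is the ``energy'' expansion used in Lemmas~\ref{lem3.5}--\ref{lem3.9}, so it is entirely natural; the paper's multiplier trick is a local shortcut specific to this lemma.
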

\begin{proof}
We first compute by integration by parts,
\begin{equation}\label{4.97}
\begin{aligned}
&\Re\int_{-\infty}^0\Lambda_0 (F_{s,-}\omega_{s,-})\bar{\omega}_{s,-}\,dy\\
%=&\Re\int_{-\infty}^0|\xi|\partial_y\omega_-\bar{\omega}_{-}dy-\int_{-\infty}^0|\xi|[\tau\alpha_{-}+\tau\beta y+T_{-}(\tau s\gamma,y)-R_-(\xi,y)]|\omega_-|^2dy\\
=&\frac{1}{2}\Lambda_0|\omega_{s,-}(\xi,\eta,0)|^2+\Lambda_0\int_{-\infty}^0\left(R_{s,-}(\xi,\eta,y)-\tau\alpha_{-}-\tau\beta y-\tau sT_{-}(y,\gamma)\right)|\omega_{s,-}|^2dy.
\end{aligned}
\end{equation}
Since $y\leq 0$  and by \eqref{derivt}  and \eqref{12-15bis} we can write
\begin{equation}\label{1-41p}
\begin{aligned}
R_{s,-}&(\xi,\eta,y)-\tau\alpha_{-}-\tau\beta y-\tau sT_{-}(y,\gamma)\geq R_{s,-}(\xi,\eta,y)-\tau\alpha_{-}-C\tau s\\
&=R_{0,-}(\xi,\eta,0)-\tau\alpha_{-}-\left( R_{0,-}(\xi,\eta,0)-R_{s,-}(\xi,\eta,y)\right)-C\tau s\\
&\geq R_{0,-}(\xi,\eta,0)-\tau\alpha_{-}-C(|y|+s)-C\tau s.
\end{aligned}
\end{equation}
On the other hand, by \eqref{L} and \eqref{case3},
\begin{equation}\label{4.101}
\begin{aligned}
R_{0,-}(\xi,\eta,0)-\tau\alpha_{-}
&\geq \frac{1}{L}\left[R_{0,+}(\xi,\eta,0)-\tau\alpha_{-}L\right]\\
&\geq \frac{1}{L}\left[R_{0,+}(\xi,\eta,0)-\frac{R_{0,+}(\xi,\eta,0)}{(1-\kappa)\alpha_+}\alpha_{-}L\right]
=\frac{R_{0,+}(\xi,\eta,0)}{L}\left[1-\frac{\alpha_{-}L}{(1-\kappa)\alpha_+}\right]\\
&=\frac{\kappa}{1-\kappa}\frac{R_{0,+}(\xi,\eta,0)}{L}\geq \frac{\Lambda_0}{C}.
\end{aligned}\end{equation}

By \eqref{4.97}, \eqref{1-41p} and \eqref{4.101} and for sufficiently small $|y|$ and $s$ we have
\begin{equation*}%\label{4.98}
\Re\int_{-\infty}^0\Lambda_0 (F_{s,-}\omega_{s,-})\bar{\omega}_{s,-}\,dy\geq \frac{\Lambda_0}{2}|\omega_{s,-0}(\xi,\eta,0)|^2 +\frac{\Lambda_0^2}{C}\int_{-\infty}^0|\omega_{s,-}|^2dy
\end{equation*}
which implies \eqref{1-40p} by \eqref{equivlambda}.\end{proof}

\begin{lemma}\label{lem3.11}
Assume \eqref{case3}. There exist constants $C$ and $s_4=\min\{s_1,s_2,s_3\}$, depending only on $\lambda_0$ and $M_0$, such that if $0\leq s\leq s_4$,  $\tau\geq C$, $\beta\geq C$, then for ${\rm supp}(\hat{v}(\xi,\cdot))\subset\left[-\frac{1}{C},\frac{1}{C}\right]$ we have
\begin{equation}\label{4.102}
\begin{aligned}
&\Lambda_1|V_+(\xi,\eta)+a_{nn}^+(0)\sqrt{\zeta_{s,+}(\xi,\eta,0)}v_+(\xi,\eta,0)|^2+\Lambda_1^2||F_{s,+}\hat{v}_{+}(\xi,\eta,\cdot)|
|^2_{L^2(\mathbb{R}_{+})}\\
&\quad\leq C\left(||P_{s,+}\hat{v}_{+}||^2_{L^2(\mathbb{R}_{+})}+\Lambda_1^2||\hat{v}_{+}||^2_{L^2(\mathbb{R}_{+})}\right).
\end{aligned}
\end{equation}
and
\begin{equation}\label{4.103}
\begin{aligned}
&\Lambda_1|V_-(\xi,\eta)-a_{nn}^-(0)\sqrt{\zeta_{s,-}(\xi,\eta,0)}\hat{v}_-(\xi,\eta,0)|^2+\Lambda_1^2|
|E_{s,-}\hat{v}_{-}(\xi,\eta,\cdot)||^2_{L^2(\mathbb{R}_{-})}\\
&\quad\leq C\left(||P_{s,-}\hat{v}_{-}||^2_{L^2(\mathbb{R}_{-})}+\Lambda_1^2||\hat{v}_{-}||^2_{L^2(\mathbb{R}_{-})}\right).
\end{aligned}
\end{equation}
\end{lemma}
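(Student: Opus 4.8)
The plan is to derive \eqref{4.102} and \eqref{4.103} from the two factorizations \eqref{3.12}, \eqref{3.13} by feeding into them the one-sided coercivity estimates already established for the present regime \eqref{case3}: \eqref{3-35p} of Lemma \ref{lem3.8} on $\RR_+$ and \eqref{1-40p} of Lemma \ref{lem3.10} on $\RR_-$. Since \eqref{4-16p} holds in the present case, \eqref{4-15bis} is at our disposal, and combining it with \eqref{1-15bis}, \eqref{1.2} and \eqref{1.3} one gets the pointwise bound
\[
\left|\der_y\left(a_{nn}^\pm(y)\sqrt{\zeta_{s,\pm}(\xi,\eta,y)}\right)\right|\le C\Lambda_0\le C\Lambda_1 ,
\]
which is what makes the commutator terms in \eqref{3.12} and \eqref{3.13} harmless.

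For \eqref{4.102} I set $\omega_{s,+}(\xi,\eta,y)=a_{nn}^+(y)F_{s,+}\hat v_+(\xi,\eta,y)$ and rewrite \eqref{3.12} as $E_{s,+}\omega_{s,+}=P_{s,+}\hat v_++\hat v_+\der_y(a_{nn}^+\sqrt{\zeta_{s,+}})$; taking $L^2(\RR_+)$-norms and using the bound above yields
\[
\|E_{s,+}\omega_{s,+}(\xi,\eta,\cdot)\|^2_{L^2(\RR_+)}\le C\left(\|P_{s,+}\hat v_+(\xi,\eta,\cdot)\|^2_{L^2(\RR_+)}+\Lambda_1^2\|\hat v_+(\xi,\eta,\cdot)\|^2_{L^2(\RR_+)}\right).
\]
Inserting this into \eqref{3-35p} (which requires no restriction on ${\rm supp}\,\hat v_+$), discarding the nonnegative term $\int_0^{+\infty}|\der_y\omega_{s,+}|^2$, and using \eqref{1.2} to replace $\omega_{s,+}$ by $F_{s,+}\hat v_+$ both at $y=0$ and in the $L^2(\RR_+)$-norm, I obtain
\[
\Lambda_1|\omega_{s,+}(\xi,\eta,0)|^2+\Lambda_1^2\|F_{s,+}\hat v_+(\xi,\eta,\cdot)\|^2_{L^2(\RR_+)}\le C\left(\|P_{s,+}\hat v_+\|^2_{L^2(\RR_+)}+\Lambda_1^2\|\hat v_+\|^2_{L^2(\RR_+)}\right).
\]
It then remains to identify the trace: from the definition of $F_{s,+}$, from $\varphi_+'(0)=\alpha_+$ (see \eqref{2.1}) and from \eqref{Vpm} one has
\[
\omega_{s,+}(\xi,\eta,0)=a_{nn}^+(0)F_{s,+}\hat v_+(\xi,\eta,0)=V_+(\xi,\eta)+a_{nn}^+(0)\sqrt{\zeta_{s,+}(\xi,\eta,0)}\,\hat v_+(\xi,\eta,0),
\]
and substituting this turns the previous inequality into \eqref{4.102}.

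The proof of \eqref{4.103} is the mirror image on $\RR_-$. With $\omega_{s,-}(\xi,\eta,y)=a_{nn}^-(y)E_{s,-}\hat v_-(\xi,\eta,y)$, \eqref{3.13} becomes $F_{s,-}\omega_{s,-}=P_{s,-}\hat v_--\hat v_-\der_y(a_{nn}^-\sqrt{\zeta_{s,-}})$, so $\|F_{s,-}\omega_{s,-}\|^2_{L^2(\RR_-)}\le C(\|P_{s,-}\hat v_-\|^2_{L^2(\RR_-)}+\Lambda_1^2\|\hat v_-\|^2_{L^2(\RR_-)})$. Because the hypothesis ${\rm supp}\,\hat v(\xi,\cdot)\subset[-1/C,1/C]$ forces ${\rm supp}\,\omega_{s,-}(\xi,\eta,\cdot)\subset[-1/C,0]$, estimate \eqref{1-40p} applies; combining it with the last bound and using \eqref{1.2} to pass from $\omega_{s,-}$ to $E_{s,-}\hat v_-$ gives
\[
\Lambda_1|\omega_{s,-}(\xi,\eta,0)|^2+\Lambda_1^2\|E_{s,-}\hat v_-(\xi,\eta,\cdot)\|^2_{L^2(\RR_-)}\le C\left(\|P_{s,-}\hat v_-\|^2_{L^2(\RR_-)}+\Lambda_1^2\|\hat v_-\|^2_{L^2(\RR_-)}\right).
\]
Using the definition of $E_{s,-}$, $\varphi_-'(0)=\alpha_-$ and \eqref{Vpm}, one finds $\omega_{s,-}(\xi,\eta,0)=V_-(\xi,\eta)-a_{nn}^-(0)\sqrt{\zeta_{s,-}(\xi,\eta,0)}\,\hat v_-(\xi,\eta,0)$, whence \eqref{4.103}.

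I do not anticipate a real obstacle: the two analytically substantive inputs --- the coercivity of $E_{s,+}$ on $\omega_{s,+}$ over $\RR_+$ and of $F_{s,-}$ on $\omega_{s,-}$ over $\RR_-$ in the regime \eqref{case3} --- have already been proved in Lemmas \ref{lem3.8} and \ref{lem3.10}. The two points that need care are (i) the bound $|\der_y(a_{nn}^\pm\sqrt{\zeta_{s,\pm}})|\le C\Lambda_1$, which lets the commutator terms coming from the factorizations be absorbed by the $\Lambda_1^2\|\hat v_\pm\|^2$ terms on the right, and (ii) the exact algebraic identification of the boundary values $\omega_{s,\pm}(\xi,\eta,0)$ with $V_\pm(\xi,\eta)\pm a_{nn}^\pm(0)\sqrt{\zeta_{s,\pm}(\xi,\eta,0)}\hat v_\pm(\xi,\eta,0)$, which hinges on $\varphi_\pm'(0)=\alpha_\pm$ and on \eqref{Vpm}. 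The surviving $\Lambda_1^2\|\hat v_\pm\|^2$ terms are not a problem: they will be disposed of when Lemmas \ref{lem3.8}--\ref{lem3.11} are assembled (using in particular \eqref{1-37p} of Lemma \ref{lem3.9} and \eqref{equivlambda}) to close the third case.
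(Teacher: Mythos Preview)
Your proposal is correct and follows essentially the same route as the paper: the paper's proof simply cites \eqref{3-35p} together with \eqref{6-27p} for \eqref{4.102}, and \eqref{1-40p} together with \eqref{7-27p} for \eqref{4.103}, which is exactly your argument of combining the coercivity estimates of Lemmas~\ref{lem3.8} and~\ref{lem3.10} with the factorization-error bounds coming from \eqref{3.12}--\eqref{3.13}. You have merely re-derived the commutator bound $|\der_y(a_{nn}^\pm\sqrt{\zeta_{s,\pm}})|\le C\Lambda_1$ explicitly (this is the content of \eqref{6-27p}--\eqref{7-27p}, valid here because \eqref{4-16p} holds in case~\eqref{case3}) and spelled out the identification of the boundary traces, but the structure is the same.
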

\begin{proof}
Inequality \eqref{4.102} follows from \eqref{3-35p} and \eqref{6-27p}. Similarly, \eqref{4.103} follows from \eqref{1-40p} and \eqref{7-27p}.
\end{proof}

\begin{lemma}\label{lem3.12}
Assume \eqref{case3}. There exist constants $C$ and $s_4$, depending only on $\lambda_0$ and $M_0$, such that if $0\leq s\leq s_4$,  $\tau\geq C$, $\beta\geq C$, then for ${\rm supp}(\hat{v}(\xi,\eta,\cdot))\subset\left[-\frac{1}{C},\frac{1}{C}\right]$ we have
\begin{equation}\label{1-44p}
\begin{aligned}
\Lambda_1&\sum_\pm|V_\pm(\xi,\eta)|^2+\Lambda_1^3\sum_\pm|\hat{v}_{\pm}(\xi,\eta,0)|^2+\frac{\Lambda_1^4}{\tau} \sum_\pm\|\hat{v}_{\pm}(\xi,\eta,\cdot)\|^2_{L^2(\mathbb{R}_{\pm})}\\
&+\frac{\Lambda_1^2}{\tau} \sum_\pm\|\partial_y\hat{v}_{\pm}(\xi,\eta,\cdot)\|^2_{L^2(\mathbb{R}_{\pm})}
\leq C\left(\sum_\pm||P_{s,+}\hat{v}_{+}||^2_{L^2(\mathbb{R}_{\pm})}+\Lambda_1^3|\sigma_0(\xi,\eta)|^2 +\Lambda_1|\sigma_1(\xi,\eta)|^2\right).
\end{aligned}
\end{equation}
\end{lemma}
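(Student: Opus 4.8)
The plan is to obtain \eqref{1-44p} by combining the one–sided estimates \eqref{4.102}, \eqref{4.103} of Lemma \ref{lem3.11} with the interior estimates \eqref{1-37p} of Lemma \ref{lem3.9} and \eqref{4-35p} of Lemma \ref{lem3.8}, and then to close the loop through the transmission relations, much as in Lemma \ref{lem4.3}. The essential new point, compared with the first two cases, is that in the third case \eqref{case3} none of the one–sided estimates controls the trace $\hat v_\pm(\xi,\eta,0)$ by itself — the boundary term produced by $F_{s,+}$ on $\mathbb{R}_+$ (resp.\ by $E_{s,-}$ on $\mathbb{R}_-$) has no definite sign — so \eqref{1-37p} and \eqref{4-35p} carry a term $C\Lambda_1|\hat v_\pm(\xi,\eta,0)|^2$ on the right, while \eqref{4.102}, \eqref{4.103} carry a remainder $C\Lambda_1^2\|\hat v_\pm\|^2_{L^2(\mathbb{R}_\pm)}$, and a naive combination is circular. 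What breaks the circularity is that \eqref{case3} together with \eqref{stimeRzer} gives $\Lambda_0\gtrsim\tau$, hence $\Lambda_1^2\gtrsim\tau^2$, so that $\tau\Lambda_1^{-2}$ and $\Lambda_1^{-2}$ are as small as we wish once $\tau_0$ is chosen large; this turns those remainders into small perturbations.

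First I would set $P:=V_++a_{nn}^+(0)\sqrt{\zeta_{s,+}(\xi,\eta,0)}\,\hat v_+(\xi,\eta,0)$ and $Q:=V_--a_{nn}^-(0)\sqrt{\zeta_{s,-}(\xi,\eta,0)}\,\hat v_-(\xi,\eta,0)$, which by \eqref{Vpm} are the traces at $y=0$ of $a_{nn}^+F_{s,+}\hat v_+$ and $a_{nn}^-E_{s,-}\hat v_-$ occurring in \eqref{4.102}, \eqref{4.103}. Feeding $\|F_{s,+}\hat v_+\|^2\le C\Lambda_1^{-2}\|P_{s,+}\hat v_+\|^2+C\|\hat v_+\|^2$ (a consequence of \eqref{4.102}) into \eqref{1-37p} and absorbing the resulting $\|\hat v_+\|^2$ term by $\Lambda_1^2/\tau$, and carrying out the analogous manipulation of \eqref{4.103} inside \eqref{4-35p}, one arrives at
\[
\Lambda_1^2\|\hat v_\pm\|^2_{L^2(\mathbb{R}_\pm)}\le \varepsilon\Big(\|P_{s,\pm}\hat v_\pm\|^2_{L^2(\mathbb{R}_\pm)}+\Lambda_1^3|\hat v_\pm(\xi,\eta,0)|^2\Big),
\]
with $\varepsilon=\varepsilon(\tau_0)\to 0$. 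Putting this back into \eqref{4.102}, \eqref{4.103} gives $\Lambda_1|P|^2+\Lambda_1^2\|F_{s,+}\hat v_+\|^2_{L^2(\mathbb{R}_+)}\le C\|P_{s,+}\hat v_+\|^2_{L^2(\mathbb{R}_+)}+C\varepsilon\Lambda_1^3|\hat v_+(\xi,\eta,0)|^2$ and likewise for $Q$.

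Next I would invert the $4\times4$ linear system formed by the definitions of $P,Q$ and the transmission relations $\sigma_0=\hat v_+(\xi,\eta,0)-\hat v_-(\xi,\eta,0)$, $\sigma_1=V_+-V_-$ (see \eqref{sigma0}, \eqref{sigma1}). Subtracting, $\big(a_{nn}^+(0)\sqrt{\zeta_{s,+}(\xi,\eta,0)}+a_{nn}^-(0)\sqrt{\zeta_{s,-}(\xi,\eta,0)}\big)\hat v_+(\xi,\eta,0)=P-Q-\sigma_1+a_{nn}^-(0)\sqrt{\zeta_{s,-}(\xi,\eta,0)}\,\sigma_0$; since the real part of that coefficient equals $a_{nn}^+(0)R_{s,+}(\xi,\eta,0)+a_{nn}^-(0)R_{s,-}(\xi,\eta,0)\ge \Lambda_1/C$ by \eqref{1-15} while $|\sqrt{\zeta_{s,-}(\xi,\eta,0)}|\le C\Lambda_1$ by \eqref{1-15bis}, we get $\Lambda_1|\hat v_+(\xi,\eta,0)|\le C(|P|+|Q|+|\sigma_1|)+C\Lambda_1|\sigma_0|$, and then, using $\hat v_-(\xi,\eta,0)=\hat v_+(\xi,\eta,0)-\sigma_0$, $V_+=P-a_{nn}^+(0)\sqrt{\zeta_{s,+}(\xi,\eta,0)}\,\hat v_+(\xi,\eta,0)$ and $V_-=V_+-\sigma_1$,
\[
\Lambda_1^3\sum_\pm|\hat v_\pm(\xi,\eta,0)|^2+\Lambda_1\sum_\pm|V_\pm(\xi,\eta)|^2\le C\Lambda_1\big(|P|^2+|Q|^2\big)+C\Lambda_1|\sigma_1|^2+C\Lambda_1^3|\sigma_0|^2.
\]

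Finally I would close the estimate: inserting the last display into the perturbed forms of \eqref{4.102}, \eqref{4.103}, the term $C\varepsilon\Lambda_1^3\sum_\pm|\hat v_\pm(\xi,\eta,0)|^2$ is bounded by $C\varepsilon\big(\Lambda_1(|P|^2+|Q|^2)+\Lambda_1|\sigma_1|^2+\Lambda_1^3|\sigma_0|^2\big)$, so choosing $\tau_0$ large enough that $C\varepsilon(\tau_0)$ is below the constant multiplying $\Lambda_1(|P|^2+|Q|^2)$ on the left, that contribution is absorbed and $\Lambda_1(|P|^2+|Q|^2)\le C\big(\sum_\pm\|P_{s,\pm}\hat v_\pm\|^2_{L^2(\mathbb{R}_\pm)}+\Lambda_1^3|\sigma_0|^2+\Lambda_1|\sigma_1|^2\big)$. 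Feeding this back into the transmission estimate bounds $\Lambda_1^3\sum_\pm|\hat v_\pm(\xi,\eta,0)|^2+\Lambda_1\sum_\pm|V_\pm|^2$, and into the $\varepsilon$–estimate it bounds $\Lambda_1^2\sum_\pm\|\hat v_\pm\|^2_{L^2(\mathbb{R}_\pm)}$; the volume terms $\frac{\Lambda_1^4}{\tau}\|\hat v_+\|^2_{L^2(\mathbb{R}_+)}+\frac{\Lambda_1^2}{\tau}\|\partial_y\hat v_+\|^2_{L^2(\mathbb{R}_+)}$ then follow from \eqref{1-37p}, and the corresponding $-$ side terms from \eqref{4-35p}, now that $\|F_{s,+}\hat v_+\|^2$, $\|E_{s,-}\hat v_-\|^2$ and $|\hat v_\pm(\xi,\eta,0)|^2$ are under control. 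This is exactly \eqref{1-44p}. The hard part is making the circular feedback between the remainder $\Lambda_1^2\|\hat v_\pm\|^2$ and the boundary debt $\Lambda_1|\hat v_\pm(\xi,\eta,0)|^2$ into a contractive loop, which succeeds only because of the third–case bound $\tau\lesssim\Lambda_0$ and a suitably large choice of $\tau_0$.
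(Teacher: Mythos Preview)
Your argument is correct and follows essentially the same route as the paper: both use the transmission identity to express $\hat v_+(\xi,\eta,0)$ via the combination $P-Q-\sigma_1+a_{nn}^-(0)\sqrt{\zeta_{s,-}}\,\sigma_0$, invoke the lower bound \eqref{1-15} on the coefficient, feed through \eqref{4.102}--\eqref{4.103} and \eqref{1-37p}, \eqref{4-35p}, and finally absorb the remainder $C\Lambda_1^2\sum_\pm\|\hat v_\pm\|^2$ against $\frac{\Lambda_1^4}{\tau}\sum_\pm\|\hat v_\pm\|^2$ for $\tau$ large. The only difference is organizational: the paper collects all estimates first and performs the absorption in one stroke at the end, whereas you set up the smallness $\Lambda_1^2\|\hat v_\pm\|^2\le \varepsilon(\cdots)$ at the outset and then close the loop; the underlying mechanism is identical. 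One small remark: your justification that ``\eqref{case3} together with \eqref{stimeRzer} gives $\Lambda_1^2\gtrsim\tau^2$'' is superfluous, since $\Lambda_1^2=|\xi|^2+|\eta|+\tau^2\ge\tau^2$ always; the absorption step works in every case for this reason alone.
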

\begin{proof}
By recalling \eqref{sigma0} and \eqref{sigma1} and by \eqref{1-15bis}, we have
\begin{equation}\label{3-44p}
\begin{aligned}
&\left|\left(a^+_{nn}(0)\sqrt{\zeta_{s,+}(\xi,\eta,0)}+a^-_{nn}(0)\sqrt{\zeta_{s,-}(\xi,\eta,0)}\right)\hat{v}_+(\xi,\eta,0)\right|\\
&\quad=\left|\left(V_++a^+_{nn}(0)\sqrt{\zeta_{s,+}}\,\hat{v}_+\right)-\left(V_--a^-_{nn}(0)\sqrt{\zeta_{s,-}}\,\hat{v}_-\right)-\sigma_1+a^-_{nn}(0)\sqrt{\zeta_{s,-}}\,\sigma_0\right|	 \\
&\quad\quad\leq \left|V_++a^+_{nn}(0)\sqrt{\zeta_{s,+}}\,\hat{v}_+\right|+\left|V_--a^-_{nn}(0)\sqrt{\zeta_{s,-}}\,\hat{v}_-\right|+|\sigma_1|+C\Lambda_1|\sigma_0|.
 \end{aligned}
\end{equation}
On the other hand, by  \eqref{1-15}
\begin{equation}\label{2-44p}
\begin{aligned}
&\left|\left(a^+_{nn}(0)\sqrt{\zeta_{s,+}(\xi,\eta,0)}+a^-_{nn}(0)\sqrt{\zeta_{s,-}(\xi,\eta,0)}\right)\hat{v}_+(\xi,\eta,0)\right|\\
&\quad\geq\left(a^+_{nn}(0)R_{s,+}
(\xi,\eta,0)+a^-_{nn}(0)R_{s,-}(\xi,\eta,0)\right)\left|\hat{v}_+(\xi,\eta,0)\right|\geq\frac{1}{C}\Lambda_1\left|\hat{v}_+(\xi,\eta,0)\right|.
 \end{aligned}
\end{equation}
By \eqref{3-44p} and \eqref{2-44p} we have
\begin{equation}\label{1-45p}
\Lambda_1^3\left|\hat{v}_+(\xi,\eta,0)\right|^2\leq C \Lambda_1\left\{\left|V_++a^+_{nn}(0)\sqrt{\zeta_{s,+}}\,\hat{v}_+\right|^2
+\left|V_--a^-_{nn}(0)\sqrt{\zeta_{s,-}}\,\hat{v}_-\right|^2+|\sigma_1|^2+\Lambda_1^2|\sigma_0|^2\right\}.
\end{equation}
Recalling \eqref{sigma0}, by \eqref{1-45p}, \eqref{4.102}, \eqref{4.103}, \eqref{3-44p} and \eqref{2-44p} we have
\begin{equation}\label{2-45p}
\Lambda_1^3\left|\hat{v}_+(\xi,\eta,0)\right|^2\leq C\left(\sum_\pm\left(\|P_{s,\pm}\hat{v}\pm\|^2_{L^2(\RR_\pm)}+\Lambda_1^2\|\hat{v}\pm\|^2_{L^2(\RR_\pm)}\right)+ \Lambda_1|\sigma_1|^2+\Lambda_1^3|\sigma_0|^2\right).
\end{equation}
By triangle inequality
\begin{equation*}%\label{1-46p}
\begin{aligned}
&\left|V_+\right|^2\leq\left(\left|V_++a^+_{nn}(0)\sqrt{\zeta_{s,+}}\,\hat{v}_+\right|+
\left|a^+_{nn}(0)\sqrt{\zeta_{s,+}}\,\hat{v}_+\right|\right)^2\\
&\quad
\leq 2 \left|V_++a^+_{nn}(0)\sqrt{\zeta_{s,+}}\,\hat{v}_+\right|^2+C\Lambda_1^2 \left|\hat{v}_+(\xi,\eta,0)\right|^2,
\end{aligned}\end{equation*}
and, by \eqref{sigma1}, \eqref{2-45p} and Lemma \ref{lem3.11}, we have
\begin{equation}\label{2-46p}
\Lambda_1\sum_\pm\left|V_+(\xi,\eta)\right|^2\leq C\left(\sum_\pm\left(\|P_{s,\pm}\hat{v}\pm\|^2_{L^2(\RR_\pm)}+\Lambda_1^2\|\hat{v}\pm\|^2_{L^2(\RR_\pm)}\right)+ \Lambda_1|\sigma_1|^2+\Lambda_1^3|\sigma_0|^2\right).
\end{equation}
Now, by \eqref{4-35p}, \eqref{1-37p}, \eqref{4.102}, \eqref{4.103} and \eqref{2-45p}, we get
\begin{equation}\label{3-46p}
\begin{aligned}
&\frac{\Lambda_1^4}{\tau}\sum_\pm\|\hat{v}\pm\|^2_{L^2(\RR_\pm)}+\frac{\Lambda_1^2}{\tau}\sum_\pm\|\partial_y\hat{v}\pm\|^2_{L^2(\RR_\pm)}
\\
&\quad
\leq C\left(\sum_\pm\left(\|P_{s,\pm}\hat{v}\pm\|^2_{L^2(\RR_\pm)}+\Lambda_1^2\|\hat{v}\pm\|^2_{L^2(\RR_\pm)}\right)+ \Lambda_1|\sigma_1|^2+\Lambda_1^3|\sigma_0|^2\right).
\end{aligned}\end{equation}
Now, by adding up \eqref{2-45p}, \eqref{2-46p} and \eqref{3-46p} and by absorbing the term $C\Lambda_1^2\sum_\pm\|\hat{v}_\pm\|$ by the left hand side, we finally get \eqref{1-44p}.
\end{proof}
\subsubsection{Conclusion}
By putting together the three cases
and noticing that, by definition of $P_{s,\pm}$, we can write
\[|\partial^2_y \hat{v}_\pm|\leq C\left(\left|P_{s,\pm}\hat{v}_\pm\right|+\Lambda_1\left|\partial_y\hat{v}_\pm\right|+\Lambda_1^2
\left|\hat{v}_\pm\right|\right)\]
we finally proved Proposition \ref{prop3.1}.
\section{Step 2 - Carleman estimate for general coefficients with weight independent of $t$}\label{step2}

In the previous section we have proved the Carleman estimate when $A_\pm=A_\pm(y)$. Now we want to derive it for $A_\pm(x,t,y)$.
To achieve this purpose, similarly to the elliptic case,  we approximate $A_\pm(x,y,t)$ with coefficients depending on $y$ only and we make use of a special kind of partition of unity introduced in the next section. At the same time we consider a weight function that is quadratic in $x$.
\subsection{Partition of unity and auxiliary results}\label{preliminary}
In this section we collect some results on a partition of unity that we  use in our proof and we describe how this partition of unity behaves with respect to the function spaces that we use.

\noindent Let $\vartheta_0, \widetilde{\vartheta}_0\in C^\infty_0(\mathbb{R})$ such that
\begin{equation*}
0\leq\vartheta_0, \widetilde{\vartheta}_0\leq 1\mbox{ for }z\in\mathbb{R},\quad  \vartheta_0(z)=\widetilde{\vartheta}_0(z)=1\mbox{ for }z\in[-1,1] \end{equation*}
and
\begin{equation*}
{\rm supp}\,\vartheta_0\subset (-3/2,3/2),\quad  {\rm supp}\,\widetilde{\vartheta}_0\subset (-9/4,9/4).
\end{equation*}
Let  $\vartheta_{n-1}(x)=\vartheta_0(x_1)\cdots\vartheta_0(x_{n-1})$.
Given $\mu\geq 1$ and $g\in \mathbb{Z}^{n}$, $g=(g',g_n)$, where $g'\in \mathbb{Z}^{n-1}$ and $g_n\in \mathbb{Z}$, we define
\[X_g=(x_{g'},t_{g_n})=(g'/\mu,g_n/\mu^2)\]
and
\[\vartheta_{g,\mu}(X)=\vartheta_{n-1}(\mu(x-x_{g'}))\widetilde{\vartheta}_0(\mu^2(t-t_{g_n})).\]
Notice that
\begin{equation*}%\label{6.3}
{\rm supp}\,\vartheta_{g,\mu}\subset \stackrel{\circ}{Q}_{3/2\mu}(X_g)\subset Q_{2/\mu}(X_g),
\end{equation*}
\begin{equation}\label{6.4}
|D_x^k\vartheta_{g,\mu}|\leq C_1\mu^k(\chi_{ Q_{3/2\mu}(X_g)}-\chi_{Q_{1/\mu}(X_g)}),\quad k=0,1,2,
\end{equation}
and
\begin{equation*}%\label{6.4t}
|\partial_t\vartheta_{g,\mu}|\leq C_1\mu^2(\chi_{ Q_{3/2\mu}(X_g)}-\chi_{Q_{1/\mu}(X_g)}),
\end{equation*}
where $C_1\geq 1$ depends only on $n$.

Since, for any $\overline{g}\in\mathbb{Z}^n$,
\begin{equation}\label{cardAg}
card\left(\{g\in \mathbb{Z}^{n}\,:\,{\rm supp}\,\vartheta_{g,\mu}\cap{\rm supp}\,\vartheta_{\overline{g},\mu}\neq\emptyset\}\right)=5^{n},\end{equation}
we can define
\begin{equation*}%\label{6.5}
\bar{\vartheta}_{\mu}(X):=\sum_{g\in \mathbb{Z}^{n}}\vartheta_{g,\mu}(X)\geq 1,\quad X\in \mathbb{R}^{n}
\end{equation*}
and
\[\eta_{g,\mu}(X)=\vartheta_{g,\mu}(X)/\bar{\vartheta}_{\mu}(X),\quad X\in \mathbb{R}^{n}.\]
Hence we have that
\begin{equation}\label{6.7}
\begin{cases}
\sum_{g\in \mathbb{Z}^{n-1}}\eta_{g,\mu}(X)= 1,\quad X\in \mathbb{R}^{n},\\
{\rm supp}\,\eta_{g,\mu}\subset Q_{3/2\mu}(X_g)\subset Q_{2/\mu}(X_g),\\
|D_x^k\partial^m_t\eta_{g,\mu}|\leq C_2\mu^{k+2m}\chi_{Q_{3/2\mu}(X_g)},\quad k=0,1,2,\quad m=0,1,
\end{cases}
\end{equation}
where $C_2\geq 1$ depends on $n$.
In Section \ref{notstat} we have recalled the definition of $H^{\frac{1}{2},0}(\mathbb{R}^{n})$, $H^{0,\alpha}(\mathbb{R}^{n})$, $\alpha\in[0,1)$ and their seminorms $[\cdot]_{\frac{1}{2},0,\mathbb{R}^{n}}$, $[\cdot]_{0,\alpha,\mathbb{R}^{n}}$, respectively, in what follows
we also need the seminorms
\begin{equation*}%\label{semQ1}
[f]_{\frac{1}{2},0,Q_{r}}=\left[\int_{I_r}\int_{Q'_{r}}\int_{Q'_{r}}\frac{|f(x_1,t)-f(x_2,t)|^2}{|x_1-x_2|^n}dx_1dx_2dt\right]^{1/2},
\end{equation*}

\begin{equation*}%\label{semQ2}
[f]_{0,\alpha, Q_{r}}=\left[\int_{Q'_r}\int_{I_{r}}\int_{I_{r}}\frac{|f(x,t_1)-f(x,t_2)|^2}{|t_1-t_2|^{1+2\alpha}}dt_1dt_2dx\right]^{1/2},
\end{equation*}
where $Q_{r}=Q_{r}(0)$.

In the rest of this subsection we give the statements of some lemmas and propositions that we use in the sequel, their proofs are the same of the ones given in \cite{dcflvw} with the obvious changes. Since the constants of various inequalities always depends on $n$ or on $\alpha$ we will omit such dependence.
\begin{lemma}\label{lem5.2}
Let $f\in C^\infty(\mathbb{R}^{n})$ and ${\rm supp} f\subset Q_{3r/4}$ for some $r\leq 1$. There exists a constant C such that we have
\begin{equation*}%\label{5.6}
[f]^2_{\frac{1}{2},0,Q_{r}}+\frac{C^{-1}}{r}\int_{Q_{r}}|f(x,t)|^2dX\leq[f]^2_{\frac{1}{2},0,\mathbb{R}^{n}}
\leq [f]^2_{\frac{1}{2},0,Q_{r}}+\frac{C}{r}\int_{Q_{r}}|f(x,t)|^2dX,
\end{equation*}

\begin{equation}\label{5.6t}
[f]^2_{0,\alpha,Q_{r}}+\frac{C^{-1}}{r^{4\alpha}}\int_{Q_{r}}|f(x,t)|^2dX\leq[f]^2_{0,\alpha,\mathbb{R}^{n}}
\leq [f]^2_{0,\alpha,Q_{r}}+\frac{C}{r^{4\alpha}}\int_{Q_{r}}|f(x,t)|^2dX,
\end{equation}
\end{lemma}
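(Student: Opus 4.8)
The statement compares the global Gagliardo-type seminorms $[f]_{\frac12,0,\mathbb{R}^n}$ and $[f]_{0,\alpha,\mathbb{R}^n}$ of a function supported in $Q_{3r/4}$ with the corresponding seminorms taken only over the cube $Q_r$, plus a weighted $L^2$-term. The plan is to treat each of the two seminorms separately; in each case the lower bound for the global seminorm in terms of the local one is elementary (the domain of integration is larger), so the content is the reverse inequality, where one must estimate the contribution of the region outside $Q_r$.

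For the $[f]_{\frac12,0}$ seminorm, I would write
\[
[f]^2_{\frac12,0,\mathbb{R}^n}=\int_{\mathbb{R}}\int_{\mathbb{R}^{n-1}}\int_{\mathbb{R}^{n-1}}\frac{|f(x_1,t)-f(x_2,t)|^2}{|x_1-x_2|^n}\,dx_1\,dx_2\,dt,
\]
split the inner double integral into the part where both $x_1,x_2\in Q'_r$ (this gives $[f]^2_{\frac12,0,Q_r}$) and the remaining part. By symmetry the remaining part is controlled by $2\int_{Q'_{3r/4}}\int_{\mathbb{R}^{n-1}\setminus Q'_r}\frac{|f(x_1,t)|^2}{|x_1-x_2|^n}\,dx_2\,dx_1\,dt$, using that $f(x_2,t)=0$ when $x_2\notin Q'_{3r/4}$. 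For $x_1\in Q'_{3r/4}$ and $x_2\notin Q'_r$ one has $|x_1-x_2|\gtrsim r$ and, more precisely, $|x_1-x_2|\gtrsim |x_2|$ for $|x_2|$ large, so $\int_{\mathbb{R}^{n-1}\setminus Q'_r}|x_1-x_2|^{-n}\,dx_2\lesssim r^{-1}$; this yields the term $\frac{C}{r}\int_{Q_r}|f|^2\,dX$. The lower bound $[f]^2_{\frac12,0,Q_r}+\frac{C^{-1}}{r}\int_{Q_r}|f|^2\leq[f]^2_{\frac12,0,\mathbb{R}^n}$ follows by keeping the $Q'_r\times Q'_r$ piece and, for the $L^2$-term, restricting to $x_2$ in an annulus of width comparable to $r$ away from $Q'_{3r/4}$, on which $|x_1-x_2|\sim r$, giving a lower bound $\gtrsim r^{-1}\int|f(x_1,t)|^2$.

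The estimate \eqref{5.6t} for $[f]_{0,\alpha}$ is completely analogous, integrating in the time variable: the exponent $|t_1-t_2|^{-(1+2\alpha)}$ is integrable at infinity (since $1+2\alpha>1$), and $\int_{\mathbb{R}\setminus I_r}|t_1-t_2|^{-(1+2\alpha)}\,dt_2\lesssim r^{-4\alpha}$ for $t_1\in I_{3r/4}$ because $|t_1-t_2|\gtrsim r^2$ there; this produces the weight $r^{-4\alpha}$. The lower bound is obtained as before by restricting $t_2$ to a set at distance $\sim r^2$ from $I_{3r/4}$. The main (very mild) obstacle is simply bookkeeping the geometry of the cubes $Q'_r$, $I_r$ and verifying that the one-dimensional integrals $\int_{|z|>c r}|z|^{-n}\,dz$ and $\int_{|z|>c r^2}|z|^{-(1+2\alpha)}\,dz$ scale as $r^{-1}$ and $r^{-4\alpha}$ respectively, with constants depending only on $n$ and $\alpha$; since, as the paper notes, this is the same argument as in \cite{dcflvw}, I would state it briefly and refer there for the details.
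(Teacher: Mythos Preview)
Your proposal is correct and follows the standard decomposition argument: split the global double integral into the local piece $Q_r\times Q_r$ and its complement, use the support condition to reduce the complement to a term where one variable lies outside $Q'_r$ (resp.\ $I_r$) while the other stays in the support, and then exploit the fact that $\int_{|z|\gtrsim r}|z|^{-n}\,dz\sim r^{-1}$ in $\mathbb{R}^{n-1}$ and $\int_{|s|\gtrsim r^2}|s|^{-(1+2\alpha)}\,ds\sim r^{-4\alpha}$; the lower bounds come from the disjoint annular contributions exactly as you describe. This is precisely the argument the paper invokes by referring to \cite{dcflvw}, so your approach coincides with the paper's (omitted) proof.
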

\begin{pr}\label{pr6.1}
Let  $\{\varsigma_g\}_{g\in \mathbb{Z}^{n}}$ be a family of smooth functions such that ${\rm supp}\,\varsigma_{g}$ in contained in the interior of $Q_{3/2\mu}(X_g)$, then
\begin{equation*}%\label{6.8}
[\sum_{g\in \mathbb{Z}^{n}}\varsigma_{g}]^2_{\frac{1}{2},0,\mathbb{R}^{n}}\leq C\left(\sum_{g\in \mathbb{Z}^{n}}[\varsigma_{g}]^2_{\frac{1}{2},0,Q_{\frac{2}{\mu}}(X_g)}
+\sum_{g\in \mathbb{Z}^{n}}\mu\int_{Q_{\frac{2}{\mu}}(X_g)}|\varsigma_{g}|^2\right),
\end{equation*}
\begin{equation}\label{6.8t}
[\sum_{g\in \mathbb{Z}^{n}}\varsigma_{g}]^2_{0,\alpha,\mathbb{R}^{n}}\leq C\left(\sum_{g\in \mathbb{Z}^{n}}[\varsigma_{g}]^2_{0,\alpha,Q_{\frac{2}{\mu}}(X_g)}
+\sum_{g\in \mathbb{Z}^{n}}\mu^{4\alpha}\int_{Q_{\frac{2}{\mu}}(X_g)}|\varsigma_{g}|^2\right).
\end{equation}
\end{pr}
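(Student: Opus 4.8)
\emph{Plan of proof.} Set $S=\sum_{g\in\mathbb Z^n}\varsigma_g$ and prove the two inequalities by the same scheme. The plan is to split each Gagliardo double integral defining $[S]_{\frac12,0,\mathbb R^n}$ (resp.\ $[S]_{0,\alpha,\mathbb R^n}$) into a \emph{near-diagonal} region, $|x_1-x_2|\le\frac1{2\mu}$ (resp.\ $|t_1-t_2|\le\frac1{2\mu^2}$), and a \emph{far} region, and to estimate the former by the local seminorms of the $\varsigma_g$ and the latter by the $L^2$ norm of $S$.

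For the near-diagonal part I would exploit that the cubes $Q_{3/2\mu}(X_g)$, hence the supports of the $\varsigma_g$, have uniformly bounded overlap, as quantified by \eqref{cardAg}--\eqref{6.7}: at any fixed point the number of indices $g$ with $\varsigma_g(x_1,t)\ne\varsigma_g(x_2,t)$ is bounded by a constant $N=N(n)$. Cauchy--Schwarz then gives $|S(x_1,t)-S(x_2,t)|^2\le N\sum_g|\varsigma_g(x_1,t)-\varsigma_g(x_2,t)|^2$, so after dividing by $|x_1-x_2|^n$ and integrating over the near-diagonal region one may interchange the sum with the integral. For each $g$ the difference $\varsigma_g(x_1,t)-\varsigma_g(x_2,t)$ vanishes unless $t\in I_{3/2\mu}(t_{g_n})$ and $x_1$ or $x_2$ lies in $Q'_{3/2\mu}(x_{g'})$; since $|x_1-x_2|\le\frac1{2\mu}$ and $\frac32+\frac12=2$, $\frac94<4$, both $x_1,x_2$ then lie in $Q'_{2/\mu}(x_{g'})$ and $t\in I_{2/\mu}(t_{g_n})$, so the $g$-th integral is $\le 2[\varsigma_g]^2_{\frac12,0,Q_{2/\mu}(X_g)}$. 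Summing, the near-diagonal part is $\le C\sum_g[\varsigma_g]^2_{\frac12,0,Q_{2/\mu}(X_g)}$.

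For the far part I would use $|S(x_1,t)-S(x_2,t)|^2\le 2|S(x_1,t)|^2+2|S(x_2,t)|^2$, symmetry in $x_1\leftrightarrow x_2$, and the elementary bound $\int_{\{z\in\mathbb R^{n-1}:\,|z|>1/2\mu\}}|z|^{-n}\,dz\le C\mu$, to get that this part is $\le C\mu\,\|S\|_{L^2(\mathbb R^n)}^2$. The bounded-overlap property once more yields $\|S\|_{L^2(\mathbb R^n)}^2\le N\sum_g\int_{Q_{2/\mu}(X_g)}|\varsigma_g|^2$, and combining the two parts proves the first inequality. The second inequality is obtained by repeating the argument with the roles of the spatial and time variables exchanged: one keeps $x$ fixed throughout, splits on $|t_1-t_2|\lessgtr\frac1{2\mu^2}$ (the natural parabolic scale of the $t$-support of $\varsigma_g$, which lies in $I_{3/2\mu}(t_{g_n})$), and replaces the kernel estimate by $\int_{\{s\in\mathbb R:\,|s|>1/2\mu^2\}}|s|^{-1-2\alpha}\,ds\le C\mu^{4\alpha}$; this last computation is exactly what produces the weight $\mu^{4\alpha}$ in \eqref{6.8t}.

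The only point requiring care is the elementary geometric bookkeeping: the splitting threshold must be small enough that a shift of that size carries the support cube $Q_{3/2\mu}(X_g)$ into $Q_{2/\mu}(X_g)$ in the variables being integrated, while the overlap multiplicity stays bounded by a constant depending only on $n$; both are guaranteed by \eqref{cardAg}--\eqref{6.7} together with the numerology $\frac32+\frac12=2$ and $\frac94+\frac12<4$. Apart from this, everything is routine and follows the pattern of the corresponding elliptic statement in \cite{dcflvw}.
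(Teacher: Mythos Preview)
Your argument is correct and is essentially the standard near/far splitting of the Gagliardo double integral that the paper has in mind: the paper does not write out a proof here but refers to \cite{dcflvw} (``their proofs are the same of the ones given in \cite{dcflvw} with the obvious changes''), and your scheme---bounded overlap plus Cauchy--Schwarz on the near-diagonal piece, kernel integrability on the far piece---is exactly that proof, with the ``obvious change'' being the parabolic time-scaling that produces the threshold $1/(2\mu^2)$ and the weight $\mu^{4\alpha}$ in \eqref{6.8t}. Your geometric bookkeeping (the checks $\tfrac32+\tfrac12=2$ and $\tfrac94+\tfrac12<4$ ensuring that a near-diagonal shift keeps both points inside $Q_{2/\mu}(X_g)$) is precisely what is needed, and the single-function case of your far estimate is just Lemma~\ref{lem5.2}.
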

\begin{pr}\label{pr6.4}
Let  $F\in C^\infty(\mathbb{R}^{n})$ with ${\rm supp}\, F\subset Q_{3/2\mu}(X_g)$, let $\beta\in(\alpha,1]$ and let $a$ be a function satisfying
\begin{equation}\label{6.25}
|a(x,t)|\leq E_a,\quad
|a(x',t')-a(x'',t'')|\leq K_a|x'-x''|+\widetilde{K}_a|t'-t''|^{\beta},
\end{equation}
for $(x,t),(x',t'),(x'',t'')\in Q_{3/2\mu}(X_g)$ and $E_a$, $K_a$, $\widetilde{K}_a$  positive constants. Then we have
\begin{equation}\label{6.26}
[a F]^2_{\frac{1}{2},0,Q_{\frac{2}{\mu}}(X_g)}\leq C\left( E_a^2[F]^2_{1/2,Q_{\frac{2}{\mu}}(X_g)}+K_a^2\mu^{-1}\int_{Q_{\frac{2}{\mu}}(X_g)}|F(x,t)|^2dX\right)
\end{equation}
and
\begin{equation}\label{6.26t}
[a F]^2_{0,\alpha,Q_{\frac{2}{\mu}}(X_g)}\leq C\left( E_a^2[F]^2_{0,\alpha,Q_{\frac{2}{\mu}}(X_g)}+\frac{\widetilde{K}_a^2\mu^{-4(\beta-\alpha)}}{\beta-\alpha}\int_{Q_{\frac{2}{\mu}}(X_g)}|F(x,t)|^2dX\right).
\end{equation}
\end{pr}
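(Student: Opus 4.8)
The plan is to reduce \eqref{6.26}--\eqref{6.26t} to the elementary product rule for Gagliardo seminorms, using \eqref{6.25} together with the size of the parabolic cubes. First I would arrange that $a$ satisfies \eqref{6.25} on all of $\mathbb{R}^{n}$: the second bound in \eqref{6.25} says that $a$ is $1$-Lipschitz with respect to $d(X,Y)=K_a|x-y|+\widetilde K_a|t-s|^{\beta}$ on $\mathbb{R}^{n-1}\times\mathbb{R}$ (recall that $|t-s|^{\beta}$ is a metric for $0<\beta\le 1$, so $d$ is a metric), hence the truncated McShane extension $\tilde a=\max\bigl(-E_a,\min(E_a,\inf_{Y\in Q_{3/2\mu}(X_g)}(a(Y)+d(X,Y)))\bigr)$ extends $a$ off $Q_{3/2\mu}(X_g)$ while preserving both bounds in \eqref{6.25}; since ${\rm supp}\,F\subset Q_{3/2\mu}(X_g)$ we have $aF=\tilde aF$, so I may assume \eqref{6.25} holds everywhere. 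Then, for $X_1,X_2$ in the relevant cube, I would use
\[
a(X_1)F(X_1)-a(X_2)F(X_2)=a(X_1)\bigl(F(X_1)-F(X_2)\bigr)+\bigl(a(X_1)-a(X_2)\bigr)F(X_2),
\]
so that $|aF(X_1)-aF(X_2)|^{2}\le 2E_a^{2}|F(X_1)-F(X_2)|^{2}+2|a(X_1)-a(X_2)|^{2}|F(X_2)|^{2}$.

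For \eqref{6.26} I would insert this bound into $[aF]^{2}_{\frac12,0,Q_{2/\mu}(X_g)}$, i.e.\ integrate it against $|x_1-x_2|^{-n}\,dx_1\,dx_2\,dt$ with $x_1,x_2\in Q'_{2/\mu}(x_{g'})$ and $t\in I_{2/\mu}(t_{g_n})$. The first term reproduces $2E_a^{2}[F]^{2}_{\frac12,0,Q_{2/\mu}(X_g)}$. In the second term only the spatial difference of $a$ enters, so \eqref{6.25} gives $|a(x_1,t)-a(x_2,t)|\le K_a|x_1-x_2|$ and the kernel becomes $K_a^{2}|x_1-x_2|^{2-n}$; after the substitution $z=x_1-x_2$ the inner integral $\int_{|z|\le C/\mu}|z|^{2-n}\,dz$ over $\mathbb{R}^{n-1}$ is bounded by a constant times $\mu^{-1}$, and the remaining $\int|F(x_2,t)|^{2}$ is $\int_{Q_{2/\mu}(X_g)}|F|^{2}\,dX$, which is exactly the right-hand side of \eqref{6.26}.

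For \eqref{6.26t} the computation is the same with the roles of $x$ and $t$ interchanged: inserting the bound into $[aF]^{2}_{0,\alpha,Q_{2/\mu}(X_g)}$ (kernel $|t_1-t_2|^{-1-2\alpha}$, with $t_1,t_2\in I_{2/\mu}(t_{g_n})$ and $x\in Q'_{2/\mu}(x_{g'})$) produces $2E_a^{2}[F]^{2}_{0,\alpha,Q_{2/\mu}(X_g)}$ from the first term, while in the second term \eqref{6.25} gives $|a(x,t_1)-a(x,t_2)|\le\widetilde K_a|t_1-t_2|^{\beta}$, so the kernel becomes $\widetilde K_a^{2}|t_1-t_2|^{2(\beta-\alpha)-1}$. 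The point to watch is that $I_{2/\mu}(t_{g_n})$ has length $8\mu^{-2}$ (not $\mu^{-1}$), so the inner integral in $t_1$ is $\int_{|s|\le 8\mu^{-2}}|s|^{2(\beta-\alpha)-1}\,ds=\frac{(8\mu^{-2})^{2(\beta-\alpha)}}{\beta-\alpha}\le\frac{C\,\mu^{-4(\beta-\alpha)}}{\beta-\alpha}$, where $C$ stays bounded because $0<\beta-\alpha\le 1$; multiplied by $\int_{Q_{2/\mu}(X_g)}|F|^{2}\,dX$ this is exactly the second term of \eqref{6.26t}.

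The argument is routine; the only delicate points are the initial extension of $a$ (needed because \eqref{6.25} is assumed only on the smaller cube $Q_{3/2\mu}(X_g)$ whereas the seminorms are taken over $Q_{2/\mu}(X_g)$), the anisotropic book-keeping of the powers of $\mu$ — side $\sim\mu^{-1}$ in $x$ but length $\sim\mu^{-2}$ in $t$, which is precisely why \eqref{6.26t} carries $\mu^{-4(\beta-\alpha)}$ rather than $\mu^{-2(\beta-\alpha)}$ — and keeping the factor $1/(\beta-\alpha)$ explicit, so that the estimate degrades controllably as $\beta\downarrow\alpha$ and the resulting lower-order term can later be absorbed by taking $\mu$ large.
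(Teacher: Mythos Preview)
Your argument is correct and follows essentially the same route as the paper: the same product-rule splitting of $aF(X_1)-aF(X_2)$, the bound $|a|\le E_a$ on the first piece, and the direct integration of the resulting power of $|x_1-x_2|$ (resp.\ $|t_1-t_2|$) on the second piece, with the same bookkeeping of the anisotropic side-lengths. Your preliminary McShane extension of $a$ is an extra bit of care the paper omits (it tacitly uses $|a(x,t_1)|\le E_a$ for all $(x,t_1)$ in the larger cube $Q_{2/\mu}(X_g)$, which is harmless since $F$ vanishes outside $Q_{3/2\mu}(X_g)$), but otherwise the two proofs coincide.
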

\begin{proof}
The proof of \eqref{6.26} is exactly the same to the proof of \cite[Proposition 4.2]{dcflvw}, hence we limit ourselves to the proof of \eqref{6.26t}.

We have
\begin{equation*}%\label{6.27}
\begin{aligned}
&[a F]^2_{0,\alpha,Q_{\frac{2}{\mu}}(X_g)}
=\int_{Q'_{\frac{2}{\mu}}(x_g')}\int_{I_{\frac{2}{\mu}}(t_{g_n})}\int_{I_{\frac{2}{\mu}}(t_{g_n})}
\frac{|a(x,t_1)F(x,t_1)-a(x,t_2)F(x,t_2)|^2}{|t_1-t_2|^{1+2\alpha}}dt_1dt_2dx\\
&\leq2\int_{Q'_{\frac{2}{\mu}}(x_g')}\int_{I_{\frac{2}{\mu}}(t_{g_n})}\int_{I_{\frac{2}{\mu}}(t_{g_n})}\left(\frac{|a(x,t_1)|^2\,|F(x,t_1) -F(x,t_2)|^2}{|t_1-t_2|^{1+2\alpha}}+\frac{|F(x,t_2)|^2\,| a(x,t_1)-a(x,t_2)|^2}{|t_1-t_2|^{1+2\alpha}}\right)dt_1dt_2dx\\
&\leq 2 E_a^2[F]^2_{0,\alpha,Q_{\frac{2}{\mu}}(X_g)}+
2\widetilde{K}^2_a\int_{Q'_{\frac{2}{\mu}}(x_g')}\int_{I_{\frac{2}{\mu}}(t_{g_n})}|F(x,t_2)|^2\int_{I_{\frac{2}{\mu}}(t_{g_n})}
|t_1-t_2|^{-1+2(\beta-\alpha)}dt_1dt_2dx\\
&\leq 2 E_a^2[F]^2_{0,\alpha,Q_{\frac{2}{\mu}}(X_g)}+C\frac{\widetilde{K}^2_a\mu^{-4(\beta-\alpha)}}{\beta-\alpha}
\int_{Q_{\frac{2}{\mu}}(X_g)}|F(x,t)|^2dX.
\end{aligned}
\end{equation*}
\end{proof}
\begin{pr}\label{pr6.2}
Let  $f\in C^\infty(\mathbb{R}^{n})\cap H^{\frac{1}{2},\alpha}(\mathbb{R}^{n})$. Then
\begin{equation}\label{6.16}
\sum_{g\in \mathbb{Z}^{n}}[f \eta_{g,\mu}]^2_{\frac{1}{2},0,Q_{\frac{2}{\mu}}(X_g)}\leq C\left( [f]^2_{\frac{1}{2},0,\mathbb{R}^{n}}+\mu\int_{\mathbb{R}^{n}}|f(x,t)|^2dX\right),
\end{equation}
and
\begin{equation}\label{6.16t}
\sum_{g\in \mathbb{Z}^{n}}[f \eta_{g,\mu}]^2_{0,\alpha,Q_{\frac{2}{\mu}}(X_g)}\leq C\left( [f]^2_{0,\alpha,\mathbb{R}^{n}}+\mu^{4\alpha}\int_{\mathbb{R}^{n}}|f(x,t)|^2dX\right).
\end{equation}
\end{pr}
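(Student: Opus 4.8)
The plan is to estimate the two seminorms of $f\eta_{g,\mu}$, summed over $g\in\mathbb{Z}^n$, by decomposing $f$ in each cube $Q_{2/\mu}(X_g)$ into a piece centered at $X_g$ (handled by the global seminorm of $f$) and a constant piece (handled by the $L^2$ norm of $f$, with the factor $\mu$ or $\mu^{4\alpha}$ coming from the derivative bounds \eqref{6.7} on $\eta_{g,\mu}$). Concretely, I would write, for each $g$,
\[
f\eta_{g,\mu}=(f-c_g)\eta_{g,\mu}+c_g\eta_{g,\mu},
\]
where $c_g$ is a suitable average of $f$ over $Q_{2/\mu}(X_g)$ (for the time seminorm one may take, for each fixed $x$, the average of $f(x,\cdot)$ over $I_{2/\mu}(t_{g_n})$, which is the natural choice matching the one-dimensional nature of the $[\cdot]_{0,\alpha}$ seminorm).

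For the first term $(f-c_g)\eta_{g,\mu}$, I would apply Proposition \ref{pr6.4} with $a=\eta_{g,\mu}$ and $F=f-c_g$: the bounds \eqref{6.7} give $E_a\leq C_2$, $K_a\leq C_2\mu$ and, for the time-Hölder part, $\widetilde K_a\leq C_2\mu^2$ with $\beta=1$. This yields, from \eqref{6.26},
\[
[(f-c_g)\eta_{g,\mu}]^2_{\frac12,0,Q_{2/\mu}(X_g)}\leq C\left([f]^2_{\frac12,0,Q_{2/\mu}(X_g)}+\mu\int_{Q_{2/\mu}(X_g)}|f-c_g|^2\,dX\right),
\]
and from \eqref{6.26t} (with $\beta-\alpha=1-\alpha$, a fixed positive number so the $1/(\beta-\alpha)$ factor is harmless),
\[
[(f-c_g)\eta_{g,\mu}]^2_{0,\alpha,Q_{2/\mu}(X_g)}\leq C\left([f]^2_{0,\alpha,Q_{2/\mu}(X_g)}+\mu^{4\alpha}\int_{Q_{2/\mu}(X_g)}|f-c_g|^2\,dX\right),
\]
using $[f-c_g]=[f]$ on each cube. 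A Poincaré-type inequality on $Q_{2/\mu}(X_g)$ bounds $\mu\int|f-c_g|^2$ by $C[f]^2_{\frac12,0,Q_{2/\mu}(X_g)}$ in the first case, and $\mu^{4\alpha}\int|f-c_g|^2$ by $C[f]^2_{0,\alpha,Q_{2/\mu}(X_g)}$ in the second (this is exactly the scaling-invariant fractional Poincaré inequality; for the time seminorm one integrates in $x$ the one-dimensional estimate). For the second term $c_g\eta_{g,\mu}$, the seminorm is $|c_g|$ times the seminorm of $\eta_{g,\mu}$, which by \eqref{6.7} is bounded by $C\mu^{1/2}$ (respectively $C\mu^{2\alpha}$) times the measure-square-root $\mu^{-(n+2)/2}$ of the support; combined with $|c_g|^2\leq C\mu^{n+2}\int_{Q_{2/\mu}(X_g)}|f|^2\,dX$ this gives $\mu\int_{Q_{2/\mu}(X_g)}|f|^2\,dX$ (respectively $\mu^{4\alpha}\int_{Q_{2/\mu}(X_g)}|f|^2\,dX$).

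Finally I would sum over $g$. Because of the bounded-overlap property \eqref{cardAg} — each point lies in at most $5^n$ of the cubes $Q_{2/\mu}(X_g)$ — the sums $\sum_g[f]^2_{\frac12,0,Q_{2/\mu}(X_g)}$ and $\sum_g[f]^2_{0,\alpha,Q_{2/\mu}(X_g)}$ are controlled by $C[f]^2_{\frac12,0,\mathbb{R}^n}$ and $C[f]^2_{0,\alpha,\mathbb{R}^n}$ respectively (the Gagliardo-type seminorm over a subregion is dominated by the global one, and only $5^n$ overlap), and likewise $\sum_g\int_{Q_{2/\mu}(X_g)}|f|^2\,dX\leq 5^n\int_{\mathbb{R}^n}|f|^2\,dX$. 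Putting the pieces together gives \eqref{6.16} and \eqref{6.16t}. The main obstacle I anticipate is getting the scaling-invariant fractional Poincaré inequality on the cube $Q_{2/\mu}(X_g)$ with the correct powers of $\mu$ — in particular matching the anisotropic parabolic scaling $x\sim\mu^{-1}$, $t\sim\mu^{-2}$ for the $[\cdot]_{0,\alpha}$ seminorm — but this is a standard rescaling argument reducing to the unit cube, and the authors note that the proofs here parallel those of \cite{dcflvw}.
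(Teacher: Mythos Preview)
Your approach is correct but takes an unnecessary detour. The decomposition $f\eta_{g,\mu}=(f-c_g)\eta_{g,\mu}+c_g\eta_{g,\mu}$ and the fractional Poincar\'e inequality you flag as ``the main obstacle'' can be avoided entirely: applying Proposition~\ref{pr6.4} (or simply repeating its two--line proof) directly with $a=\eta_{g,\mu}$ and $F=f$ already gives
\[
[f\eta_{g,\mu}]^2_{\frac12,0,Q_{2/\mu}(X_g)}\leq C\Big([f]^2_{\frac12,0,Q_{2/\mu}(X_g)}+K_a^2\mu^{-1}\!\!\int_{Q_{2/\mu}(X_g)}\!\!|f|^2\,dX\Big)
=C\Big([f]^2_{\frac12,0,Q_{2/\mu}(X_g)}+\mu\!\!\int_{Q_{2/\mu}(X_g)}\!\!|f|^2\,dX\Big),
\]
since $E_a\leq C_2$ and $K_a\leq C_2\mu$ by \eqref{6.7}; and likewise \eqref{6.26t} with $\widetilde K_a\leq C_2\mu^2$, $\beta=1$ gives the term $\mu^{4\alpha}\int|f|^2$ directly. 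Summing over $g$ with the bounded--overlap property \eqref{cardAg} then yields \eqref{6.16} and \eqref{6.16t} in one step. (The support hypothesis on $F$ in Proposition~\ref{pr6.4} is not used in its proof---the estimate on $Q_{2/\mu}(X_g)$ holds for any $F$---so there is no obstruction to taking $F=f$.) This is the argument the paper has in mind when it refers to \cite{dcflvw}; note that Proposition~\ref{pr6.4} is deliberately stated and proved \emph{before} Proposition~\ref{pr6.2}.

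Your route does work once you let $c_g$ depend on $t$ (for the $[\cdot]_{\frac12,0}$ part) or on $x$ (for the $[\cdot]_{0,\alpha}$ part), as you indicate in passing; but then your treatment of the term $c_g\eta_{g,\mu}$, where you write ``the seminorm is $|c_g|$ times the seminorm of $\eta_{g,\mu}$'' and bound $|c_g|^2\leq C\mu^{n+2}\int|f|^2$, tacitly treats $c_g$ as a single constant, which is inconsistent. The bookkeeping can be fixed, but the point is that none of it is needed: the $\mu\int|f|^2$ (resp.\ $\mu^{4\alpha}\int|f|^2$) term is exactly what Proposition~\ref{pr6.4} produces, so subtracting $c_g$ to convert it into a seminorm and then adding it back is circular.
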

\begin{pr}\label{pr6.3}
Let  $f\in C^\infty(\mathbb{R}^{n-1})\cap H^{\frac{1}{2},\alpha}(\mathbb{R}^{n})$. Then
\begin{equation}\label{6.20}
\sum_{g\in \mathbb{Z}^{n}}[f\, D_x\eta_{g,\mu}]^2_{\frac{1}{2},0,Q_{\frac{2}{\mu}}(X_g)}\leq C\left(\mu^2[f]^2_{\frac{1}{2},0,\mathbb{R}^{n}}+\mu^3\int_{\mathbb{R}^{n}}|f(x,t)|^2dX\right).
\end{equation}
\begin{equation}\label{6.20t}
\sum_{g\in \mathbb{Z}^{n}}[f\, D_x\eta_{g,\mu}]^2_{0,\alpha,Q_{\frac{2}{\mu}}(X_g)}\leq C\left(\mu^2[f]^2_{0,\alpha,\mathbb{R}^{n}}+\frac{\mu^{2+4\alpha}}{1-\alpha}\int_{\mathbb{R}^{n}}|f(x,t)|^2dX\right).
\end{equation}
\end{pr}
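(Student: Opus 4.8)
The plan is to localize $f$ by means of the partition of unity, to apply Proposition \ref{pr6.4} with the bounded Lipschitz factor $a=D_x\eta_{g,\mu}$ (in place of $a=\eta_{g,\mu}$, which is the case relevant for Proposition \ref{pr6.2}), and then to sum over $g$. The gain here is that $D_x\eta_{g,\mu}$ has sup size $\sim\mu$ and Lipschitz-in-$x$ constant $\sim\mu^2$, against sup size $\sim1$ and constant $\sim\mu$ for $\eta_{g,\mu}$; this is exactly what produces the extra power $\mu^2$ on the right-hand sides of \eqref{6.20}--\eqref{6.20t}.

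More precisely, by \eqref{6.7} the function $a:=D_x\eta_{g,\mu}$ is supported in $Q_{3/2\mu}(X_g)$ and satisfies \eqref{6.25} with $\beta=1$ and constants
\[
E_a\le C\mu,\qquad K_a\le C\mu^{2},\qquad \widetilde{K}_a\le C\mu^{3},
\]
coming respectively from $|D_x\eta_{g,\mu}|\le C\mu$, $|D_x^2\eta_{g,\mu}|\le C\mu^2$ and $|D_x\partial_t\eta_{g,\mu}|\le C\mu^3$. Hence Proposition \ref{pr6.4} gives, for every $F\in C^\infty(\mathbb{R}^n)$ with ${\rm supp}\,F\subset Q_{3/2\mu}(X_g)$,
\[
[F\,D_x\eta_{g,\mu}]^2_{\frac{1}{2},0,Q_{2/\mu}(X_g)}\le C\Big(\mu^2[F]^2_{\frac{1}{2},0,Q_{2/\mu}(X_g)}+\mu^{3}\int_{Q_{2/\mu}(X_g)}|F|^2\Big),
\]
\[
[F\,D_x\eta_{g,\mu}]^2_{0,\alpha,Q_{2/\mu}(X_g)}\le C\Big(\mu^2[F]^2_{0,\alpha,Q_{2/\mu}(X_g)}+\frac{\mu^{2+4\alpha}}{1-\alpha}\int_{Q_{2/\mu}(X_g)}|F|^2\Big),
\]
the coefficient in the second inequality being $\widetilde{K}_a^2\mu^{-4(\beta-\alpha)}/(\beta-\alpha)=C\mu^{6}\mu^{-4(1-\alpha)}/(1-\alpha)=C\mu^{2+4\alpha}/(1-\alpha)$.

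To conclude I would choose $F=f\eta_{g',\mu}$. Since $\sum_{g'}\eta_{g',\mu}\equiv1$ and, by \eqref{cardAg}, for a fixed $g$ only the $5^n$ indices $g'$ with ${\rm supp}\,\eta_{g',\mu}\cap{\rm supp}\,\eta_{g,\mu}\neq\emptyset$ are relevant, one has $fD_x\eta_{g,\mu}=\sum_{g'}(f\eta_{g',\mu})D_x\eta_{g,\mu}$ on $Q_{2/\mu}(X_g)$, each $f\eta_{g',\mu}$ being a $C^\infty$ function supported near $Q_{3/2\mu}(X_{g'})$. Applying the two displays above to each summand --- after, if needed, harmlessly replacing $Q_{2/\mu}(X_g)$ by a fixed dilate $Q_{c_0/\mu}(X_g)$, $c_0=c_0(n)$, containing all the supports in play, which only enlarges the seminorms (the comparison of seminorms over nested cubes being the content of Lemma \ref{lem5.2} and its proof) --- then summing over the $5^n$ neighbours $g'$ and over all $g\in\mathbb{Z}^n$ (each $g'$ occurring at most $5^n$ times), and finally invoking \eqref{6.16} and \eqref{6.16t} of Proposition \ref{pr6.2} to bound $\sum_{g'}[f\eta_{g',\mu}]^2_{\frac{1}{2},0}\le C([f]^2_{\frac{1}{2},0,\mathbb{R}^n}+\mu\int_{\mathbb{R}^n}|f|^2)$ and $\sum_{g'}[f\eta_{g',\mu}]^2_{0,\alpha}\le C([f]^2_{0,\alpha,\mathbb{R}^n}+\mu^{4\alpha}\int_{\mathbb{R}^n}|f|^2)$, together with the finite-overlap bound $\sum_{g'}\int|f\eta_{g',\mu}|^2\le C\int_{\mathbb{R}^n}|f|^2$, yields \eqref{6.20} and \eqref{6.20t}. (In \eqref{6.20t} the term $\mu^2[f]^2_{0,\alpha,\mathbb{R}^n}$ actually vanishes, since $f$ is a function of the space variables only, but it is harmless to keep it.)

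The proof involves no new idea beyond those of \cite{dcflvw}; the one point deserving some attention is the bookkeeping of the powers of $\mu$ in the Hölder-in-$t$ estimate \eqref{6.26t} --- checking that $\widetilde{K}_a^2\mu^{-4(\beta-\alpha)}/(\beta-\alpha)$ reduces, for $\beta=1$, exactly to $C\mu^{2+4\alpha}/(1-\alpha)$ --- and verifying that the passage from $Q_{2/\mu}(X_g)$ to the slightly larger $Q_{c_0/\mu}(X_g)$ generates only lower-order $L^2$ terms, which are absorbed.
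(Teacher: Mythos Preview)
Your argument is correct and matches the approach the paper defers to \cite{dcflvw} for: apply the product-rule estimate of Proposition~\ref{pr6.4} with $a=D_x\eta_{g,\mu}$, carrying $E_a\le C\mu$, $K_a\le C\mu^2$, $\widetilde K_a\le C\mu^3$ (with $\beta=1$), and sum over $g$ by finite overlap. One minor simplification: the proof of \eqref{6.26}--\eqref{6.26t} does not actually use the support hypothesis on $F$, so you may take $F=f$ directly and then bound $\sum_g[f]^2_{\cdot,\,Q_{2/\mu}(X_g)}\le C[f]^2_{\cdot,\,\mathbb{R}^n}$ by finite overlap of the cubes, bypassing the auxiliary localization $F=f\eta_{g',\mu}$ and the passage to enlarged cubes $Q_{c_0/\mu}$. (Incidentally, the hypothesis $f\in C^\infty(\mathbb{R}^{n-1})$ is almost certainly a typo for $C^\infty(\mathbb{R}^{n})$, matching Proposition~\ref{pr6.2}, so your parenthetical about $[f]_{0,\alpha}$ vanishing is moot.)
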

\subsection{Estimate of the left hand side of the Carleman estimate, I}
In the present subsection and in the next one we derive the Carleman estimate for general coefficients. In order to make clear the procedure that we follow let us introduce and recall some notation and some definitions. For any $0<\delta\leq 1$ we define
\begin{equation}\label{7.7}
A^\delta_{\pm}(x,t,y):=A_{\pm}(\delta x,\delta^2t,\delta y),
\end{equation}
\begin{equation*}%\label{7.8}
\mathcal{L}_\delta(w):=\sum_{\pm}H_{\pm}\left({-\der_t w_\pm+\rm div}_{x,y}(A^\delta_{\pm}(x,t,y)D_{x,y}w_{\pm})\right),
\end{equation*}
and
\begin{equation*}%\label{7.11}
\begin{cases}
h_{0}(w):=w_+(x,t,0)-w_-(x,t,0),\\
h_{1}(w):=A^\delta_+(x,t,0)D_{x,y}w_+(x,t,0)\cdot \nu-A^\delta_-(x,t,0)(x,t,0)D_{x,y}w_-(x,t,0)\cdot \nu,
\end{cases}
\end{equation*}
where $\nu=-e_n$.

Next, for any $g=(g',g_n)\in \mathbb{Z}^{n}$ we define
\begin{equation*}%\label{7.10}
\begin{cases}
A^{\delta,g}_{\pm}(y):=A^\delta_{\pm}(X_g,y)=A_{\pm}(\delta x_{g'},\delta^2t_{g_n}\delta y),\\
\mathcal{L}_{\delta,g}(w):=\sum_{\pm}H_{\pm}\left(-\der_t w_\pm+{\rm div}_{x,y}(A^{\delta,g}_{\pm}(y)D_{x,y}w_{\pm})\right).
\end{cases}
\end{equation*}
Notice that
\begin{equation*}%\label{7.13}
\lambda_0|z|^2\leq A^{\delta,g}_{\pm}(y)z\cdot z\leq \lambda^{-1}_0|z|^2,\, \forall y\in \mathbb{R},\,\forall\, z\in \mathbb{R}^n
\end{equation*}
and
\begin{equation*}%\label{7.14}
|A^{\delta,g}_{\pm}(y')-A^{\delta,g}_{\pm}(y)|\leq M_0\delta|y'-y|.
\end{equation*}
Concerning the weight functions, let us introduce the following notation.
\[
\begin{cases}
\kappa_\varepsilon(x):=-\varepsilon|x|^2/2,\\
\psi_\varepsilon(x,y):=\varphi(y)+\kappa_\varepsilon(x),\\
\psi_{\varepsilon,g'}(x,y):=\varphi(y)+D_x\kappa_\varepsilon(x_{g'})\cdot(x-x_{g'})+\kappa_\varepsilon(x_{g'}),
\end{cases}
\]
where $\varphi(y)$ is defined in \eqref{2.1}. In addition we assume that $\alpha_{+}, \alpha_{-}, \beta$ are fixed positive numbers such that Theorem \ref{pr22} holds for the operator $\mathcal{L}_{\delta,g}$.

Notice that
\begin{equation}\label{aggiunta}
e^{\tau\psi_\varepsilon}\leq e^{\tau\psi_{\varepsilon,g'}}\leq e^{2(n-1)\frac{\varepsilon\tau}{\mu^2}}e^{\tau\psi_\varepsilon}\mbox{ in }Q'_{\frac{2}{\mu}}(x_{g'}).
\end{equation}

In order to estimate the left hand side of \eqref{8.24} we define

\begin{equation*}%\label{l.s.t-1}
\begin{aligned}
\Xi_E(w):=&\sum_{\pm}\sum_{k=0}^2\tau^{3-2k}\int_{\mathbb{R}^{n+1}_{\pm}}|D_{x,y}^k{w}_{\pm}|^2e^{2\tau\psi_{\varepsilon}(x,y)}dXdy
+\sum_{\pm}\sum_{k=0}^1\tau^{3-2k}\int_{\mathbb{R}^{n}}|D_{x,y}^k{w}_{\pm}(x,t,0)|^2e^{2\psi_{\varepsilon}(x,0)}dX\\
&+\sum_{\pm}\tau^2[(e^{\tau\psi_{\varepsilon}}w_{\pm})(\cdot,\cdot,0)]^2_{\frac{1}{2},0,\mathbb{R}^{n}}
+\sum_{\pm}[\partial_y(e^{\tau\psi_{\varepsilon,\pm}}w_{\pm})(\cdot,\cdot,0)]^2_{\frac{1}{2},0,\mathbb{R}^{n}}
+\sum_{\pm}[D_x(e^{\tau\psi_{\varepsilon}}w_{\pm})(\cdot,\cdot,0)]^2_{\frac{1}{2},0,\mathbb{R}^{n}},\\
\end{aligned}
\end{equation*}

\begin{equation}\label{l.s.t-2}
\begin{aligned}
\Xi_N(w):=&\tau^{-1}\sum_{\pm}\int_{\mathbb{R}^{n+1}_{\pm}}|\partial_t{w}_{\pm}|^2e^{2\tau\psi_{\varepsilon}(x,y)}dXdy
+\sum_{\pm}\tau^2[(e^{\tau\psi_\varepsilon}w_{\pm})(\cdot,\cdot,0)]^2_{0,\frac{1}{4},\mathbb{R}^{n}}\\
&+\sum_{\pm}[(e^{\tau\psi_\varepsilon}w_{\pm})(\cdot,\cdot,0)]^2_{0,\frac{3}{4},\mathbb{R}^{n}}
+\sum_{\pm}[\partial_y(e^{\tau\psi_{\varepsilon,\pm}}w_{\pm})(\cdot,\cdot,0)]^2_{0,\frac{1}{4},\mathbb{R}^{n}}\\
\end{aligned}
\end{equation}
and
\begin{equation}\label{l.s.}
\Xi(w):=\Xi_E(w)+\Xi_N(w).
\end{equation}

Roughly speaking, $\Xi_E(w)$ behaves similarly to the corresponding elliptic term defined in \cite[(4.23)]{dcflvw} and $\Xi_N(w)$ is the additional contribution that arises from the parabolic operator.

If we assume that ${\rm supp}\, w\subset \mathfrak{U}:=Q_{1/2}\times [-r_0,r_0]$ and that
\begin{equation}\label{7.17}
\tau\geq 1/\varepsilon\quad\text{and}\quad  \mu=(\varepsilon\tau)^{1/2},
\end{equation}
then arguing as in \cite[Sect. 4.2]{dcflvw} we have
\begin{equation}\label{7.29e}
\Xi_E(w)\leq C\sum_{g\in \mathbb{Z}^{n}} \Xi_E(w\eta_{g,\mu})+C\widetilde{R}_{0,E},
\end{equation}
where
\begin{equation*}
\widetilde{R}_{0,E}:= (\varepsilon\tau)^{1/2}\sum_{\pm}\int_{\mathbb{R}^{n}}(|D_{x,y}w_{\pm}(x,t,0)|^2
+\tau^2|w_{\pm}(x,t,0)|^2)e^{2\tau\psi_{\varepsilon}(x,0)}dX
\end{equation*}
and $C$ depends only on $\lambda_0,M_0$.

In the rest of the present subsection we prove that
\begin{equation}\label{7.29t}
\Xi_N(w)\leq C\sum_{g\in \mathbb{Z}^{n}} \Xi_N(w\eta_{g,\mu})+C\widetilde{R}_{0,N},
\end{equation}
where
\begin{equation*}
\widetilde{R}_{0,N}:= (\varepsilon\tau)^{1/2}\sum_{\pm}\int_{\mathbb{R}^{n}}(|\partial_yw_{\pm}(x,t,0)|^2+
\tau^2|w_{\pm}(x,t,0)|^2)e^{2\tau\psi_{\varepsilon}(x,0)}dX
\end{equation*}
and $C$ depends only on $\lambda_0,M_0$.

By \eqref{6.7}, we can write
\begin{equation}\label{7.12}
w_{\pm}(x,t,y)=\sum_{g\in \mathbb{Z}^{n}}w_{\pm}(x,t,y)\eta_{g,\mu}(x,t).
\end{equation}
From \eqref{cardAg}, \eqref{aggiunta} and \eqref{7.12}, we get the following estimate from above of the first term at the righthand side of \eqref{l.s.t-2}
\begin{equation}\label{7.15}
\tau^{-1}\int_{\mathbb{R}^{n+1}_{\pm}}|\partial_t{w}_{\pm}|^2e^{2\tau\psi_{\varepsilon}(x,y)}dXdy\leq	C \tau^{-1}\sum_{g\in \mathbb{Z}^{n}}\int_{\mathbb{R}^{n+1}_{\pm}}|\partial_t({w}_{\pm}\eta_{g,\mu})|^2e^{2\tau\psi_{\varepsilon,g'}(x,y)}dXdy
\end{equation}

In the next Lemma we give some estimates that will be useful in this subsection as well as in subsection \ref{subsEII}

\begin{lemma}\label{lemma7.2t}
If $\alpha\in[0,1)$ and ${\rm supp}\, f\subset Q_{3/2\mu}(X_g)$, then we have that
\begin{equation}\label{7.191t}
[f e^{\tau\psi_{\varepsilon}(\cdot,0)}]^2_{0,\alpha,Q_{2/\mu}(X_g)}\leq[f e^{\tau\psi_{\varepsilon,g'}(\cdot,0)}]^2_{0,\alpha,Q_{2/\mu}(X_g)}%\\
\leq C [f e^{\tau\psi_{\varepsilon}(\cdot,0)}]^2_{0,\alpha,Q_{2/\mu}(X_g)},
\end{equation}
%\begin{equation}\label{7.19t}
%[f e^{\tau\psi_{\varepsilon}(\cdot,0)}]^2_{0,\alpha,Q_{2/\mu}(X_g)}%\\
%\leq [f e^{\tau\psi_{\varepsilon,g'}(\cdot,0)}]^2_{0,\alpha,Q_{2/\mu}(X_g)},
%\end{equation}
\begin{equation}\label{4.32e}
[f e^{\tau\psi_{\varepsilon}(\cdot,0)}]^2_{\frac{1}{2},0,Q_{2/\mu}(X_g)}\leq[f e^{\tau\psi_{\varepsilon,g'}(\cdot,0)}]^2_{\frac{1}{2},0,Q_{2/\mu}(X_g)}%\\
\leq C [f e^{\tau\psi_{\varepsilon}(\cdot,0)}]^2_{\frac{1}{2},0,Q_{2/\mu}(X_g)},
\end{equation}

%\begin{equation}\label{4.31e}
%[f e^{\tau\psi_{\varepsilon}(\cdot,0)}]^2_{\frac{1}{2},0,Q_{2/\mu}(X_g)}%\\
%\leq [f e^{\tau\psi_{\varepsilon,g'}(\cdot,0)}]^2_{\frac{1}{2},0,Q_{2/\mu}(X_g)}
%\end{equation}
%where $C$ depends only on $n$.
\end{lemma}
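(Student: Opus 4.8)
The plan is to reduce both inequalities to the single elementary fact that on $\{y=0\}$ the two weights differ by an explicit nonnegative function of $x$ alone. Since $\varphi(0)=0$ and $\kappa_\varepsilon$ is quadratic, one computes $\psi_\varepsilon(x,0)=-\tfrac{\varepsilon}{2}|x|^2$ and $\psi_{\varepsilon,g'}(x,0)=-\varepsilon x_{g'}\cdot(x-x_{g'})-\tfrac{\varepsilon}{2}|x_{g'}|^2$, so that $\psi_{\varepsilon,g'}(x,0)-\psi_\varepsilon(x,0)=\tfrac{\varepsilon}{2}|x-x_{g'}|^2\geq 0$ (the same holds for every $y$, the $\varphi(y)$ part cancelling). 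Set $w_{g'}(x):=e^{\tau(\psi_{\varepsilon,g'}(x,0)-\psi_\varepsilon(x,0))}=e^{\frac{\varepsilon\tau}{2}|x-x_{g'}|^2}$, so $e^{\tau\psi_{\varepsilon,g'}(\cdot,0)}=w_{g'}\,e^{\tau\psi_\varepsilon(\cdot,0)}$. Recalling $\mu^2=\varepsilon\tau$ from \eqref{7.17} and $|x-x_{g'}|^2\le 4(n-1)\mu^{-2}$ on $Q'_{2/\mu}(x_{g'})$, I record the two bounds that do all the work:
\[
1\leq w_{g'}\leq e^{2(n-1)},\qquad |D_xw_{g'}|=\tau\varepsilon|x-x_{g'}|\,w_{g'}\leq 2\sqrt{n-1}\,e^{2(n-1)}\mu\qquad\text{on }Q'_{2/\mu}(x_{g'}),
\]
the first being \eqref{aggiunta} with $\mu^2=\varepsilon\tau$.

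For \eqref{7.191t} one simply uses that $w_{g'}$ and $e^{\tau\psi_\varepsilon(\cdot,0)}$ depend on $x$ only, hence in the seminorm $[\cdot]_{0,\alpha,Q_{2/\mu}(X_g)}$, which differences in $t$ at fixed $x$, the factor $e^{2\tau\psi_{\varepsilon,g'}(x,0)}=w_{g'}(x)^2e^{2\tau\psi_\varepsilon(x,0)}$ pulls out of the inner double $t$-integral:
\[
[f e^{\tau\psi_{\varepsilon,g'}(\cdot,0)}]^2_{0,\alpha,Q_{2/\mu}(X_g)}=\int_{Q'_{2/\mu}(x_{g'})}w_{g'}(x)^2\left(\int_{I_{2/\mu}(t_{g_n})}\int_{I_{2/\mu}(t_{g_n})}\frac{|f(x,t_1)-f(x,t_2)|^2e^{2\tau\psi_\varepsilon(x,0)}}{|t_1-t_2|^{1+2\alpha}}\,dt_1\,dt_2\right)dx .
\]
Since $1\leq w_{g'}^2\leq e^{4(n-1)}$ on $Q'_{2/\mu}(x_{g'})$, this gives \eqref{7.191t} at once, with left constant $1$ and right constant $C=e^{4(n-1)}$.

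For \eqref{4.32e} the weight varies exactly in the variable being differenced, so I would invoke Proposition \ref{pr6.4} with the time-H\"older datum absent ($\widetilde{K}_a=0$). For the right-hand inequality, take $F:=f e^{\tau\psi_\varepsilon(\cdot,0)}$ (whose support equals ${\rm supp}\,f$, hence is contained in $Q_{3/2\mu}(X_g)$) and $a:=w_{g'}$, for which the bounds above give $E_a\leq e^{2(n-1)}$ and $K_a\leq 2\sqrt{n-1}\,e^{2(n-1)}\mu$; for the left-hand inequality, take $F:=f e^{\tau\psi_{\varepsilon,g'}(\cdot,0)}$ and $a:=w_{g'}^{-1}$, for which $E_a\leq 1$ and $K_a\leq 2\sqrt{n-1}\,e^{2(n-1)}\mu$ (as $w_{g'}\geq 1$). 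In both cases \eqref{6.26} yields the wanted inequality modulo the zeroth-order remainder $C\,K_a^2\mu^{-1}\int_{Q_{2/\mu}(X_g)}|F|^2\le C\mu\int_{Q_{2/\mu}(X_g)}|F|^2$.

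The only point that is not routine bookkeeping, and the one I expect to be the real obstacle, is absorbing this $L^2$-term into the seminorm, i.e. the Poincar\'e-type inequality
\[
\mu\int_{Q_{2/\mu}(X_g)}|G|^2\,dX\leq C\,[G]^2_{\frac12,0,Q_{2/\mu}(X_g)}\qquad\text{whenever }{\rm supp}\,G\subset Q_{3/2\mu}(X_g) .
\]
I would prove it directly, without compactness: for $(x,t)\in{\rm supp}\,G$ and any $x'\in Q'_{2/\mu}(x_{g'})\setminus Q'_{3/2\mu}(x_{g'})$ one has $G(x',t)=0$ and $|x-x'|\leq 4\sqrt{n-1}\,\mu^{-1}$, so $|G(x,t)|^2=|G(x,t)-G(x',t)|^2$; integrating in $x'$ over that annular set, of measure $(4^{n-1}-3^{n-1})\mu^{-(n-1)}$, against $|x-x'|^{-n}\geq(4\sqrt{n-1})^{-n}\mu^{n}$, gives $|G(x,t)|^2\leq C\mu^{-1}\int_{Q'_{2/\mu}(x_{g'})}|x-x'|^{-n}|G(x,t)-G(x',t)|^2dx'$, and integration in $(x,t)$ over $Q_{3/2\mu}(X_g)$ yields the inequality. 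Applying it with $G=f e^{\tau\psi_\varepsilon(\cdot,0)}$ and with $G=f e^{\tau\psi_{\varepsilon,g'}(\cdot,0)}$, both supported in $Q_{3/2\mu}(X_g)$, absorbs the remainders from Proposition \ref{pr6.4} and completes \eqref{4.32e} (up to a universal constant on each side).
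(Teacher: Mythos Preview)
Your proof is correct. For \eqref{7.191t} your argument coincides with the paper's: since $w_{g'}=e^{\tau(\psi_{\varepsilon,g'}-\psi_\varepsilon)}$ depends on $x$ only, it factors out of the $t$-difference seminorm, and the bounds $1\le w_{g'}^2\le e^{4(n-1)}$ (i.e.\ \eqref{aggiunta} with $\mu^2=\varepsilon\tau$) give both inequalities with the same constants.

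For \eqref{4.32e} the paper simply refers to \cite[Lemma~4.2]{dcflvw}, while you give a self-contained argument: apply Proposition~\ref{pr6.4} with $a=w_{g'}^{\pm1}$ (using $E_a\le e^{2(n-1)}$, $K_a\le C\mu$, $\widetilde K_a=0$), and then absorb the residual $C\mu\!\int_{Q_{2/\mu}(X_g)}|F|^2$ via the Poincar\'e-type bound $\mu\!\int_{Q_{2/\mu}(X_g)}|G|^2\le C[G]^2_{1/2,0,Q_{2/\mu}(X_g)}$ for $G$ supported in $Q_{3/2\mu}(X_g)$, which you prove directly by comparing with points in the annulus $Q'_{2/\mu}\setminus Q'_{3/2\mu}$. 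This works; note that your Poincar\'e inequality is essentially the lower estimate in the first display of Lemma~\ref{lem5.2} (with $r=2/\mu$), specialized via the compact support of $G$. The one discrepancy is that your route yields \eqref{4.32e} with a universal constant on the \emph{left} side as well, not $1$; since every application of the lemma in the paper only uses the two-sided equivalence up to constants, this is harmless.
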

\begin{proof} For sake of shortness, we only show the proof the inequality on right of \eqref{7.191t}. The proof of inequality on the left is similar and the proof of \eqref{4.32e} is the same of \cite[Lemma 4.2]{dcflvw}.

Denote by
\[
F=fe^{\tau\psi_{\varepsilon}(\cdot,0)}.
\]

By \eqref{aggiunta} and by the second of \eqref{7.17} we have
\begin{equation*}
\begin{aligned}
&[fe^{\tau\psi_{\varepsilon,g'}(\cdot,0)}]^2_{0,\alpha,Q_{2/\mu}(X_g)}
=[Fe^{\tau(\psi_{\varepsilon,g'}-\psi_{\varepsilon})(\cdot,0)}]^2_{0,\alpha,Q_{2/\mu}(X_g)}\\
&\quad
\leq e^{4(n-1)}[F]^2_{0,\alpha,Q_{2/\mu}(X_g)}= e^{4(n-1)}[f e^{\tau\psi_{\varepsilon}(\cdot,0)}]^2_{0,\alpha,Q_{2/\mu}(X_g)},
\end{aligned}
\end{equation*}
that is \eqref{7.191t}.
\end{proof}

By \eqref{5.6t}, \eqref{7.12}, \eqref{6.8t} and \eqref{7.191t}  we have
 \begin{equation*}
\begin{aligned}
\tau^2[(e^{\tau\psi_\varepsilon}w_{\pm})(\cdot,\cdot,0)]^2_{0,\frac{1}{4},\mathbb{R}^{n}}
=& C\tau^2\sum_{g\in \mathbb{Z}^{n}}\left([(e^{\tau\psi_\varepsilon,g'}w_{\pm}\eta_{g,\mu})\right.(\cdot,\cdot,0)]^2_{0,\frac{1}{4},Q_{2/\mu}(X_g)}\\
&+(\varepsilon\tau)^{1/2}\int_{Q_{2/\mu}(X_g)}\left.|(w_{\pm}\eta_{g,\mu})(x,t,0)|^2e^{2\tau\psi_\varepsilon}(x,0)dX\right)\\
\leq C\tau^2\sum_{g\in \mathbb{Z}^{n}}[(e^{\tau\psi_\varepsilon,g'}&w_{\pm}\eta_{g,\mu})(\cdot,\cdot,0)]^2_{0,\frac{1}{4},\mathbb{R}^n}+
(\varepsilon\tau)^{1/2}\tau^2\int_{\mathbb{R}^n}|w_{\pm}(x,t,0)|^2e^{2\tau\psi_{\varepsilon}(x,0)}dX.
\end{aligned}
\end{equation*}

Similarly we estimate the third and the fourth term at the righthand side of \eqref{l.s.t-2}  so that, taking into account \eqref{7.15}, \eqref{7.29t} follows.

Finally, \eqref{7.29e} and \eqref{7.29t} give
\begin{equation}\label{7.29final}
\Xi(w)\leq C\sum_{g\in \mathbb{Z}^{n}} \Xi(w\eta_{g,\mu})+C\widetilde{R}_0,
\end{equation}
where
\begin{equation*}
\widetilde{R}_0:= (\varepsilon\tau)^{1/2}\sum_{\pm}\int_{\mathbb{R}^{n}}(|D_{x,y}w_{\pm}(x,t,0)|^2
+\tau^2|w_{\pm}(x,t,0)|^2)e^{2\tau\psi_{\varepsilon}(x,0)}dX
\end{equation*}
and $C$ depends only on $\lambda_0,M_0$.

\subsection{Estimate of the left hand side of the Carleman estimate, II}\label{subsEII}
In this section, we continue to estimate $\Xi(w)$ from above using \eqref{7.29final}. To this aim we apply Theorem \ref{pr22} to the function $w\eta_{g,\mu}$ with the weight function $\psi_{\eps,g'}=\varphi(y)-\eps x_{g'}\cdot x+\eps|x_{g'}|^2/2$. First we notice that if ${\rm supp}\, w\subset \mathfrak{U}:=Q_{1/2}\times [-r_0,r_0]$ and $\mu\geq 4$ then either $|x_g'|\leq 1$ and $|t_{g_n}|\leq 1$ or ${\rm supp}\, \eta_{g,\mu}\cap Q_{1/2}=\emptyset$ so that, in both the cases, we can apply Theorem \ref{pr22}.

By applying \eqref{5.16} and by adding up with respect to $g\in\mathbb{Z}^{n-1}$, we obtain that
\begin{equation}\label{8.1}
\sum_{g\in \mathbb{Z}^{n}}\Xi(w\eta_{g,\mu})\leq C \sum_{g\in \mathbb{Z}^{n}}(d^{(0)}_{g,\mu}+d^{(1)}_{g,\mu}+d^{(2)}_{g,\mu}+d^{(3)}_{g,\mu}),
\end{equation}
where
\begin{equation*}
\begin{aligned}
d^{(0)}_{g,\mu}=&[e^{\tau\psi_{\varepsilon,g'}}h_{0;g,\mu}(w)]^2_{0,\frac{3}{4},\mathbb{R}^{n}}
+[e^{\tau\psi_{\varepsilon,g'}}h_{1;g,\mu}(w)]^2_{0,\frac{1}{4},\mathbb{R}^{n}},\\
d^{(1)}_{g,\mu}=& \int_{\mathbb{R}^{n+1}}|\mathcal{L}_{\delta,g}(w\eta_{g,\mu})|^2e^{2\tau\psi_{\varepsilon,g'}(x,y)}dXdy,\\
d^{(2)}_{g,\mu}=&\tau^{3}\int_{\mathbb{R}^{n}}|e^{\tau\psi_{\varepsilon,g'}(x,0)}h_{0;g,\mu}(w)|^2dX+
[D_x(e^{\tau\psi_{\varepsilon,g'}(\cdot,0)}h_{0;g,\mu}(w))]^2_{\frac{1}{2},0,\mathbb{R}^{n}},\\
d^{(3)}_{g,\mu}=&\tau\int_{\mathbb{R}^{n}}|e^{\tau\psi_{\varepsilon,g'}(x,0)}h_{1;g,\mu}(w)|^2dX+
[e^{\tau\psi_{\varepsilon,g'}(\cdot,0)}h_{1;g,\mu}(w)]^2_{\frac{1}{2},0,\mathbb{R}^{n}},
\end{aligned}
\end{equation*}
where we set
\begin{equation}\label{theta0g}
h_{0;g,\mu}(w):=(w_+(x,t,0)-w_-(x,t,0))\eta_{g,\mu}(x,t)=\widetilde{h}_0(w)\eta_{g,\mu},
\end{equation}
\begin{equation}\label{theta1g}
h_{1;g,\mu}(w):=A^{\delta,g}_+(0)D_{x,y}(w_+\eta_{g,\mu})\cdot \nu-A^{\delta,g}_-(0)D_{x,y}(w_-\eta_{g,\mu})\cdot \nu.\end{equation}
%Recall that here we use $A\lesssim B$ to denote $A\leq CB$ with $C$ depending on $\lambda_0,M_0,n$.
In order to estimate from above the four terms of \eqref{8.1} we would like to point out that $\sum_{g\in \mathbb{Z}^{n}}d^{(0)}_{g,\mu}$ is the "new" term that arises in this parabolic context, whereas the other terms are basically the same of the corresponding terms of the elliptic case, \cite[(4.36)]{dcflvw} as soon as we notice that by \eqref{lipgen} and \eqref{7.7} we have

\begin{equation}\label{lipgen-delta}
|A^{\delta}_{\pm}(x,t,y)-A^{\delta,g}_{\pm}(y)|\leq M_0\delta, \quad \forall (x,t,y)\in Q_{2/\mu}(X_g)\times \mathbb{R}.
\end{equation}

We begin to estimate from above the comparatively new term $\sum_{g\in \mathbb{Z}^{n}}d^{(0)}_{g,\mu}$. First we estimate $\sum_{g\in \mathbb{Z}^{n}}[e^{\tau\psi_{\varepsilon,g'}(\cdot,0)}h_{0;g,\mu}(w)]^2_{0,\frac{3}{4},\mathbb{R}^{n}}$. By \eqref{5.6t}, \eqref{6.16t}, \eqref{7.191t} and \eqref{theta0g} we have

\begin{equation}\label{d4-1}
\begin{aligned}
&\sum_{g\in \mathbb{Z}^{n}}[e^{\tau\psi_{\varepsilon,g'}(\cdot,0)}h_{0;g,\mu}(w)]^2_{0,\frac{3}{4},\mathbb{R}^{n}}\\
\leq & C\sum_{g\in \mathbb{Z}^{n}}[e^{\tau\psi_{\varepsilon,g'}(\cdot,0)}\widetilde{h}_0(w)\eta_{g,\mu}]^2_{0,\frac{3}{4},Q_{2/\mu}(X_g)}+
C\sum_{g\in \mathbb{Z}^{n}}(\varepsilon\tau)^{3/2}\int_{Q_{2/\mu}(X_g)}|e^{\tau\psi_{\varepsilon,g'}(x,0)}\widetilde{h}_0(w)\eta_{g,\mu}(x,0)|^2dX\\
\leq & C[e^{\tau\psi_{\varepsilon}(\cdot,0)}\widetilde{h}_0(w)]^2_{0,\frac{3}{4},\mathbb{R}^n}+ C(\varepsilon\tau)^{3/2}\sum_{\pm}\int_{\mathbb{R}^n}|w_{\pm}(x,t,0)|^2e^{2\tau\psi_{\varepsilon}(x,0)}dX.
\end{aligned}
\end{equation}

Now we estimate $\sum_{g\in \mathbb{Z}^{n}}[e^{\tau\psi_{\varepsilon,g'}(\cdot,0)}h_{1;g,\mu}(w)]^2_{0,\frac{1}{4},\mathbb{R}^{n}}$.
By \eqref{theta1g} it is easy to write $h_{1;g,\mu}(w)$ as
\begin{equation*}%\label{8.10}
%\begin{aligned}
h_{1;g,\mu}(w)=\widetilde{h}_1(w)\eta_{g,\mu}+J^{(1)}_{g,\mu}+J^{(2)}_{g,\mu}+J^{(3)}_{g,\mu},
%\end{aligned}
\end{equation*}
where
\begin{equation*}
\begin{aligned}
J^{(1)}_{g,\mu}=&w_+A^{\delta}_+(x,t,0)D_{x,y}\eta_{g,\mu}\cdot \nu -w_-A^{\delta}_-(x,t,0)D_{x,y}\eta_{g,\mu}\cdot \nu ,\\
J^{(2)}_{g,\mu}=&\eta_{g,\mu}(A^{\delta,g}_+(0)-A^{\delta}_+(x,t,0))D_{x,y}w_+\cdot \nu\\
&-\eta_{g,\mu}(A^{\delta,g}_-(0)-A^{\delta}_-(x,t,0))D_{x,y}w_-\cdot \nu,\\
J^{(3)}_{g,\mu}=&w_+(A^{\delta,g}_+(0)-A^{\delta}_+(x,t,0))D_{x,y}\eta_{g,\mu}\cdot \nu\\
&-w_-(A^{\delta,g}_-(0)-A^{\delta}_-(x,t,0))D_{x,y}\eta_{g,\mu}\cdot \nu.
\end{aligned}
\end{equation*}
By \eqref{cardAg}, \eqref{6.16t} and \eqref{7.191t} %(and taking into account that ${\rm supp}\, w(\cdot,\cdot,0)\subset Q_{1/2}$)
we have

\begin{equation}\label{seminorma-h1}
\begin{aligned}
\sum_{g\in \mathbb{Z}^{n}}[e^{\tau\psi_{\varepsilon,g'}(\cdot,0)}\widetilde{h}_1(w)\eta_{g,\mu}]^2_{0,\frac{1}{4},\mathbb{R}^{n}}&\leq
C[e^{\tau\psi_{\varepsilon}(\cdot,0)}\widetilde{h}_1(w)]^2_{0,\frac{1}{4},\mathbb{R}^{n}}\\
+& C(\varepsilon\tau)^{1/2}\sum_{\pm}\int_{\mathbb{R}^n}\left(|D_{x,y}w_{\pm}(x,t,0)|^2
+\tau^2|w_{\pm}(x,t,0)|^2\right)e^{2\tau\psi_{\varepsilon}(x,0)}dX.
\end{aligned}
\end{equation}
From \eqref{cardAg}, \eqref{6.7} and \eqref{5.6t} we have
\begin{equation}\label{J1}
\begin{aligned}
&\sum_{g\in \mathbb{Z}^{n}}[e^{\tau\psi_{\varepsilon,g'}(\cdot,0)}J^{(1)}_{g,\mu})]^2_{0,\frac{1}{4},\mathbb{R}^{n}}\\
\leq &C\sum_{g\in \mathbb{Z}^{n}}[e^{\tau\psi_{\varepsilon}(\cdot,0)}J^{(1)}_{g,\mu}]^2_{0,\frac{1}{4},Q_{2/\mu}(X_g)}
+C(\tau\varepsilon)^{1/2}\sum_{\pm}\int_{\mathbb{R}^{n}}|w_{\pm}(x,t,0)|^2e^{2\tau\psi_{\varepsilon}(x,0)}dX.\\
\end{aligned}
\end{equation}
Now, in order to estimate the first term at the righthand side of \eqref{J1}, after using the triangle inequality, we apply \eqref{6.7}, \eqref{6.26t} (we choose $\beta=1/2$, $\alpha=1/4$) and \eqref{6.20t} and we get
\begin{equation}\label{J1-1}
\begin{aligned}
&\sum_{g\in \mathbb{Z}^{n}}[e^{\tau\psi_{\varepsilon}(\cdot,0)}J^{(1)}_{g,\mu}]^2_{0,\frac{1}{4},Q_{2/\mu}(X_g)}
\leq C\sum_{\pm}\sum_{g\in \mathbb{Z}^{n}}\left([(e^{\tau\psi_{\varepsilon}}w_{\pm}D_x\eta_{g,\mu})(\cdot,\cdot,0)]^2_{0,\frac{1}{4},Q_{2/\mu}(X_g)}\right.\\
&+\delta^2(\varepsilon\tau)^{1/2}\int_{Q_{2/\mu}(X_g)}\left.|w_{\pm}(x,t,0)|^2e^{2\tau\psi_{\varepsilon}(x,0)}dX\right)\\
\leq & C\sum_{\pm}\left(\varepsilon\tau[(e^{\tau\psi_{\varepsilon}}w_{\pm})(\cdot,\cdot,0))]^2_{0,\frac{1}{4},\mathbb{R}^n}+
(\varepsilon\tau)^{3/2}\int_{\mathbb{R}^n}|w_{\pm}(x,t,0)|^2e^{2\tau\psi_{\varepsilon}(x,0)}dX\right).
\end{aligned}
\end{equation}

Hence by \eqref{J1} and \eqref{J1-1} we have

\begin{equation}\label{J1-2}
\begin{aligned}
&\sum_{g\in \mathbb{Z}^{n}}[e^{\tau\psi_{\varepsilon,g'}(\cdot,0)}J^{(1)}_{g,\mu}]^2_{0,\frac{1}{4},\mathbb{R}^{n}}\\
\leq &C\sum_{\pm}\left(\varepsilon\tau[(e^{\tau\psi_{\varepsilon}}w_{\pm})(\cdot,\cdot,0))]^2_{0,\frac{1}{4},\mathbb{R}^n}+
(\varepsilon\tau)^{3/2}\int_{\mathbb{R}^n}|w_{\pm}(x,t,0)|^2e^{2\tau\psi_{\varepsilon}(x,0)}dX\right).
\end{aligned}
\end{equation}

In order to estimate $\sum_{g\in \mathbb{Z}^{n}}[e^{\tau\psi_{\varepsilon,g'}(\cdot,0)}J^{(2)}_{g,\mu}]^2_{0,\frac{1}{4},\mathbb{R}^{n}}$ we use \eqref{disug-seminorme}, \eqref{cardAg}, \eqref{6.7}, \eqref{5.6t}, \eqref{6.26t}, \eqref{6.16t} and \eqref{lipgen-delta}   %(and taking into account that ${\rm supp}\, w(\cdot,\cdot,0)\subset Q_{1/2}$)
we have

\begin{equation}\label{J2}
\begin{aligned}
&\sum_{g\in \mathbb{Z}^{n}}[e^{\tau\psi_{\varepsilon,g'}(\cdot,0)}J^{(2)}_{g,\mu}]^2_{0,\frac{1}{4},\mathbb{R}^{n}}\\
\leq & C(\tau\varepsilon)^{1/2}\sum_{\pm}\int_{\mathbb{R}^{n}}\left(|D_{x,y}w_{\pm}(x,t,0)|^2
+\tau^2|w_{\pm}(x,t,0)|^2\right)e^{2\tau\psi_{\varepsilon}(x,0)}dX\\
+&C\sum_{\pm}\sum_{g\in \mathbb{Z}^{n}}\left[\left(e^{\tau\psi_{\varepsilon}}\eta_{g,\mu}(A^{\delta,g}_{\pm}-A^{\delta}_{\pm})D_{x,y}w_{\pm})(\cdot,\cdot,0)\right)
\cdot \nu\right]^2_{0,\frac{1}{4},Q_{2/\mu}(X_g)}\\
\leq & C\left(\sqrt{\varepsilon\tau}+\frac{\delta^2}{\sqrt{\varepsilon\tau}}\right)\sum_{\pm}\int_{\mathbb{R}^n}\left(
|D_{x,y}w_{\pm}(x,t,0)|^2+\tau^{2}|w_{\pm}(x,t,0)|^2\right)e^{2\tau\psi_{\varepsilon}(x,0)}dX\\
+&C\frac{\delta^2\tau}{\varepsilon}\sum_{\pm}[(e^{\tau\psi_{\varepsilon}}w_{\pm})(\cdot,\cdot,0)]^2_{0,\frac{1}{4},\mathbb{R}^n}+
C\frac{\delta^2}{\varepsilon\tau}\sum_{\pm}[(e^{\tau\psi_{\varepsilon}}w_{\pm})(\cdot,\cdot,0)]^2_{0,\frac{3}{4},\mathbb{R}^n}\\
+&C\frac{\delta^2}{\varepsilon\tau}\sum_{\pm}[D_{x}(e^{\tau\psi_{\varepsilon}}w_{\pm})(\cdot,\cdot,0)]^2_{\frac{1}{2},0,\mathbb{R}^n}
+C\frac{\delta^2}{\varepsilon\tau}\sum_{\pm}[\partial_y(e^{\tau\psi_{\varepsilon}}w_{\pm})(\cdot,\cdot,0)]^2_{0,\frac{1}{4},\mathbb{R}^n}.
\end{aligned}
\end{equation}

Arguing as before it is simple to obtain

\begin{equation}\label{J3}
\begin{aligned}
&\sum_{g\in \mathbb{Z}^{n}}[e^{\tau\psi_{\varepsilon,g'}(\cdot,0)}J^{(3)}_{g,\mu}]^2_{0,\frac{1}{4},\mathbb{R}^{n}}\\
\leq & C\delta^2 \sum_{\pm}\left(\varepsilon\tau [(e^{\tau\psi_{\varepsilon}}w_{\pm})(\cdot,\cdot,0))]^2_{0,\frac{1}{4},\mathbb{R}^n}+
(\varepsilon\tau)^{3/2}\int_{\mathbb{R}^{n}}|w_{\pm}(x,t,0)|^2e^{2\tau\psi_{\varepsilon}(x,0)}dX\right).
\end{aligned}
\end{equation}

Finally , combining \eqref{d4-1}, \eqref{seminorma-h1}, \eqref{J1-2}, \eqref{J2} and \eqref{J3} we have

\begin{equation}\label{d0}
\begin{aligned}
\sum_{g\in \mathbb{Z}^{n}}d^{(0)}_{g,\mu}\leq C[e^{\tau\psi_{\varepsilon}(\cdot,0)}\widetilde{h}_0(w)]^2_{0,\frac{3}{4},\mathbb{R}^n}
+C[e^{\tau\psi_{\varepsilon}(\cdot,0)}\widetilde{h}_1(w)]^2_{0,\frac{1}{4},\mathbb{R}^{n}}+CR_0
\end{aligned}
\end{equation}
where
\begin{equation*}
\begin{aligned}
R_0=&\left(\sqrt{\varepsilon\tau}+\frac{\delta^2}{\sqrt{\varepsilon\tau}}\right)\sum_{\pm}\int_{\mathbb{R}^n}\left(
\tau^{2}|w_{\pm}(x,t,0)|^2+|D_{x,y}w_{\pm}(x,t,0)|^2\right)e^{2\tau\psi_{\varepsilon}(x,0)}dX\\
+&\left(\varepsilon\tau+
\frac{\delta^2\tau}{\varepsilon}\right)\sum_{\pm}[(e^{\tau\psi_{\varepsilon}}w_{\pm})(\cdot,\cdot,0)]^2_{0,\frac{1}{4},\mathbb{R}^n}+
\frac{\delta^2}{\varepsilon\tau}\sum_{\pm}[(e^{\tau\psi_{\varepsilon}}w_{\pm})(\cdot,\cdot,0)]^2_{0,\frac{3}{4},\mathbb{R}^n}\\
+&\frac{\delta^2}{\varepsilon\tau}\sum_{\pm}[D_{x}(e^{\tau\psi_{\varepsilon}}w_{\pm})(\cdot,\cdot,0)]^2_{\frac{1}{2},0,\mathbb{R}^n}
+\frac{\delta^2}{\varepsilon\tau}\sum_{\pm}[\partial_y(e^{\tau\psi_{\varepsilon}}w_{\pm})(\cdot,\cdot,0)]^2_{0,\frac{1}{4},\mathbb{R}^n}.
\end{aligned}
\end{equation*}

By \eqref{ellgen}, \eqref{6.7} and \eqref{lipgen-delta} we obtain that
\begin{equation*}%\label{8.2}
\begin{aligned}
&|\mathcal{L}_{\delta,g}(w\eta_{g,\mu})|\\
\leq &|\mathcal{L}_\delta(w\eta_{g,\mu})|+
|\mathcal{L}_\delta(w\eta_{g,\mu})-\mathcal{L}_{\delta,g}(w\eta_{g,\mu})|\\
\leq &\eta_{g,\mu}|\mathcal{L}_\delta(w)|+C\chi_{Q_{\frac{2}{\mu}}(X_g)}\left(\delta(\varepsilon\tau)^{-1/2}|D_{x,y}^2w|+
(\varepsilon\tau)^{1/2}|D_{x,y}w|\,
+\varepsilon\tau|w_\pm|\right),
\end{aligned}
\end{equation*}
which, together with \eqref{cardAg}, \eqref{aggiunta} and \eqref{7.17}, gives
\begin{equation}\label{8.3}
\sum_{g\in \mathbb{Z}^{n}}d^{(1)}_{g,\mu}\leq
C\int_{\mathbb{R}^{n+1}}|\mathcal{L}_\delta(w)|^2\,e^{2\tau\psi_{\varepsilon}(x,y)}dXdy+CR_1,
\end{equation}
where
\begin{equation*}
\begin{aligned}
R_1=\sum_{\pm}\int_{\mathbb{R}^{n+1}_{\pm}}\left(\frac{\delta^2}{\varepsilon\tau}|D_{x,y}^2w_{\pm}|^2+
\varepsilon\tau|D_{x,y}w_{\pm}|^2+\varepsilon^2\tau^2|w_{\pm}|^2\right)e^{2\tau\psi_{\varepsilon}(x,y)}dXdy.
\end{aligned}
\end{equation*}

By \eqref{cardAg}, \eqref{6.16}, \eqref{4.32e}, \eqref{theta0g} and \eqref{6.16} we have

\begin{equation}\label{8.9}
\sum_{g\in \mathbb{Z}^{n}}d^{(2)}_{g,\mu}\\
\leq C\left( [D_x(e^{\tau\psi_{\varepsilon}(\cdot,0)}\widetilde{h}_0(w))]^2_{\frac{1}{2},0,\mathbb{R}^{n}}+\tau^{3}\int_{\mathbb{R}^{n}}
|\widetilde{h}_0(w)|^2e^{2\tau\psi_{\varepsilon}(x,0)}dX+R_2\right),
\end{equation}
where
\begin{equation*}
\begin{aligned}
R_2=&\sum_\pm\left(\varepsilon^2\tau^2[e^{\tau\psi_{\varepsilon}(\cdot,0)}w_\pm(\cdot,\cdot,0)]^2_{\frac{1}{2},0,\mathbb{R}^{n}}+
(\varepsilon\tau)^{1/2}\int_{\mathbb{R}^{n}}|D_xw_\pm(x,t,0)|^2e^{2\tau\psi_{\varepsilon}(x,0)}dX\right.
\\
&\left.+(\varepsilon\tau)^{5/2}\int_{\mathbb{R}^{n}}|w_\pm(x,t,0)|^2e^{2\tau\psi_{\varepsilon}(x,0)}dX\right).
\end{aligned}\end{equation*}

Similarly, by \eqref{ellgen}, \eqref{cardAg}, \eqref{6.7}, \eqref{6.20}, \eqref{4.32e} and \eqref{lipgen-delta} we have

\begin{equation}\label{8.21}
\sum_{g\in \mathbb{Z}^{n}}d^{(3)}_{g,\mu}\leq C\left(\tau\int_{\mathbb{R}^{n}}|\widetilde{h}_1(w)|^2e^{2\tau\psi_{\varepsilon}(x,0)}dX+
[e^{\tau\psi_{\varepsilon}(\cdot,0)}\widetilde{h}_1(w)]^2_{\frac{1}{2},0,\mathbb{R}^{n}}+R_3\right),
\end{equation}
where
\begin{equation*}
\begin{aligned}
R_3=&\frac{\delta^{2}}{\varepsilon\tau}\sum_{\pm}[D_{x,y}(w_{\pm}e^{\tau\psi_{\varepsilon}})(\cdot,\cdot,0)]^2_{\frac{1}{2},0,\mathbb{R}^{n}}
+\tau(\varepsilon+\delta^{2}\varepsilon^{-1})\sum_{\pm}[e^{\tau\psi_{\varepsilon}(\cdot,0)}w_{\pm}(\cdot,0)]^2_{\frac{1}{2},0,\mathbb{R}^{n}}\\
&+\delta^2\varepsilon^{-1}\sum_{\pm}\int_{\mathbb{R}^{n}}|D_{x,y}w_{\pm}(x,t,0)|^2e^{2\tau\psi_{\varepsilon}(x,0)}dX\\
&+\tau^2(\varepsilon +\delta^{2}\varepsilon^{-1/2})\sum_{\pm}\int_{\mathbb{R}^{n}}|w_{\pm}(x,t,0)|^2e^{2\tau\psi_{\varepsilon}(x,0)}dX.
\end{aligned}
\end{equation*}

Now we choose $\varepsilon=\delta$, so that by \eqref{7.29final}, \eqref{8.1}, \eqref{d0}, \eqref{8.3}, \eqref{8.9} and \eqref{8.21} we have

\begin{equation}\label{8.22}
\begin{aligned}
\Xi(w)\leq &C\left(\int_{\mathbb{R}^{n+1}_{\pm}}|\mathcal{L}_\delta(w)|^2\,e^{2\tau\psi_{\delta,\pm}}dXdy
+[e^{\tau\psi_{\delta}(\cdot,0)}\widetilde{h}_1(w)]^2_{\frac{1}{2},\frac{1}{4},\mathbb{R}^{n}}
\right.\\
&+[D_x(e^{\tau\psi_{\delta}(\cdot,0)}\widetilde{h}_0(w))]^2_{\frac{1}{2},0,\mathbb{R}^{n}}+
[e^{\tau\psi_{\delta}(\cdot,0)}\widetilde{h}_0(w)]^2_{0,\frac{3}{4},\mathbb{R}^n}\\
&\left.+\tau\int_{\mathbb{R}^{n}}|\widetilde{h}_1(w)|^2e^{2\tau\psi_{\delta}(x,0)}dX+
\tau^{3}\int_{\mathbb{R}^{n}}|\widetilde{h}_0(w)|^2e^{2\tau\psi_{\delta}(x,0)}dX+R_4\right),
\end{aligned}
\end{equation}
where
\begin{equation*}
\begin{aligned}
R_4=&\delta\sum_{\pm}\int_{\mathbb{R}^{n+1}_{\pm}}\left(\frac{1}{\tau}|D_{x,y}^2w_{\pm}|^2+
\tau|D_{x,y}w_{\pm}|^2+\tau^2|w_{\pm}|^2\right)e^{2\tau\psi_{\delta}(x,y)}dXdy\\
+&(\delta\tau)^{1/2}\sum_{\pm}\int_{\mathbb{R}^{n}}\left((|D_{x,y}w_{\pm}(x,t,0)|^2
+\tau^2|w_{\pm}(x,t,0)|^2\right)e^{2\tau\psi_{\delta}(x,0)}dX\\
+&\delta\tau^2\sum_{\pm}[(e^{\tau\psi_{\delta}}w_{\pm})(\cdot,\cdot,0)]^2_{\frac{1}{2},\frac{1}{4},\mathbb{R}^n}+
\frac{\delta}{\tau}\sum_{\pm}[(e^{\tau\psi_{\delta}}w_{\pm})(\cdot,\cdot,0)]^2_{0,\frac{3}{4},\mathbb{R}^n}\\
+&\frac{\delta}{\tau}\sum_{\pm}[D_{x}(e^{\tau\psi_{\delta}}w_{\pm})(\cdot,\cdot,0)]^2_{\frac{1}{2},0,\mathbb{R}^n}
+\frac{\delta}{\tau}\sum_{\pm}[\partial_y(e^{\tau\psi_{\delta}}w_{\pm})(\cdot,\cdot,0)]^2_{0,\frac{1}{4},\mathbb{R}^n}
\end{aligned}
\end{equation*}
and $C$ depends on $\lambda_0, M_0, n$.
Now it is easy to note that there exists a sufficiently small $\delta_0$  and a sufficiently large $\tau_0$, both depending on $\lambda_0,M_0,n$  such that if $\delta\leq\delta_0$ and $\tau\geq\tau_0$, then $R_4$ on the right hand side of \eqref{8.22} can be absorbed by $\Xi(w)$ (defined in \eqref{l.s.}). In other words, we have proved that

\begin{equation}\label{Final}
\begin{aligned}
&\sum_{\pm}\sum_{k=0}^2\tau^{3-2k}\int_{\mathbb{R}^{n+1}_{\pm}}|D_{x,y}^k{w}_{\pm}|^2e^{2\tau\psi_{\delta}(x,y)}dXdy+
\tau^{-1}\sum_{\pm}\int_{\mathbb{R}^{n+1}_{\pm}}|\partial_t{w}_{\pm}|^2e^{2\tau\psi_{\delta}(x,y)}dXdy\\
&+\sum_{\pm}\sum_{k=0}^1\tau^{3-2k}\int_{\mathbb{R}^{n}}|D_{x,y}^k{w}_{\pm}(x,t,0)|^2e^{2\tau\psi_{\delta}(x,0)}dX
+\tau^2\sum_{\pm}[(e^{\tau\psi_\delta}w_{\pm})(\cdot,\cdot,0)]^2_{\frac{1}{2},\frac{1}{4},\mathbb{R}^{n}}\\
&+\sum_{\pm}[(e^{\tau\psi_\delta}w_{\pm})(\cdot,\cdot,0)]^2_{0,\frac{3}{4},\mathbb{R}^{n}}
+\sum_{\pm}[\partial_y(e^{\tau\psi_{\delta,\pm}}w_{\pm})(\cdot,\cdot,0)]^2_{\frac{1}{2},\frac{1}{4},\mathbb{R}^{n}}
+\sum_{\pm}[D_x(e^{\tau\psi_{\delta,\pm}}w_{\pm})(\cdot,\cdot,0)]^2_{\frac{1}{2},0,\mathbb{R}^{n}}\\
\leq &C\left(\int_{\mathbb{R}^{n+1}}|\mathcal{L}_{\delta}(w)|^2\,e^{2\tau\psi_{\delta}(x,y)}dXdy+
[e^{\tau\psi_\delta(\cdot,0)}\widetilde{h}_1(w)]^2_{\frac{1}{2},\frac{1}{4},\mathbb{R}^{n}}
+[e^{\tau\psi_\delta (\cdot,0)}\widetilde{h}_0(w)]^2_{0,\frac{3}{4},\mathbb{R}^{n}}\right.\\
&\left.+[D_x(e^{\tau\psi_\delta(\cdot,0)}\widetilde{h}_0(w))]^2_{\frac{1}{2},0,\mathbb{R}^{n}}+
\tau^3\int_{\mathbb{R}^{n}}|\widetilde{h}_0(w)|^2e^{2\tau\psi_\delta(x,0)}dX
+\tau\int_{\mathbb{R}^{n}}|\widetilde{h}_1(w)|^2e^{2\tau\psi_\delta(x,0)}dX\right).
\end{aligned}
\end{equation}
Now, applying \eqref{Final} to the function $w(x,t,y)=u(\delta x,\delta^2t,\delta y)$, by a standard change of variable  we obtain \eqref{8.24} with $b=0$.

\section{Step 3 - Carleman estimate with weight depending on $t$}\label{step3}
In the previous step we have proved that
\begin{equation}\label{(1)-1}
\begin{aligned}
\sum_{\pm}&\left(\sum_{k=0}^2\tau^{3-2k}\int_{\mathbb{R}^{n+1}_{\pm}}|D_{x,y}^k{u}_{\pm}|^2e^{2\tau\phi_{\delta_0}}dXdy+
\tau^{-1}%\sum_{\pm}
\int_{\mathbb{R}^{n+1}_{\pm}}|\partial_t{u}_{\pm}|^2e^{2\tau\phi_{\delta_0}}dXdy\right)%\\
%&
+Y^L(u;\phi_{\delta_0})\\
&\leq C\left(\int_{\mathbb{R}^{n+1}}|\mathcal{L}(u)|^2\,e^{2\tau\phi_{\delta_0}}dXdy+
Y^R(u;\phi_{\delta_0})\right).
\end{aligned}
\end{equation}
Let us now define
\begin{equation}\label{phi1}
	\varphi_1(t)=-\frac{b}{2}\,t^2
\end{equation}
and insert in \eqref{(1)-1} the function $u=ve^{\tau\varphi_1(t)}$. It is easy to see that
%\begin{equation}\label{(1.5)-1}
%\mathcal{L}(ve^{\tau\phi_1(t)})=\left[\sum_\pm H_\pm (-\tau\phi_1^\prime v_\pm)+\mathcal{L}(v)\right]e^{\tau\phi_1(t)},
%\end{equation}
%and, hence,
\begin{equation}\label{(2)-1}
\begin{aligned}
	\int_{\mathbb{R}^{n+1}}|\mathcal{L}(ve^{\tau\varphi_1}|^2\,e^{2\tau\phi_{\delta_0}}dXdy\leq&
	C \tau^2\sum_\pm \int_{\mathbb{R}^{n+1}_{\pm}}b^2|v_{\pm}|^2e^{2\tau(\phi_{\delta_0}+\varphi_1)}dXdy
	\\&+2\int_{\mathbb{R}^{n+1}}|\mathcal{L}(v)|^2\,e^{2\tau(\phi_{\delta_0}+\varphi_1)}dXdy,
\end{aligned}\end{equation}
\begin{equation}\label{(1a)-2}
	 \int_{\mathbb{R}^{n+1}_{\pm}}|D_{x,y}^k({v}_{\pm}e^{\tau\varphi_1})|^2e^{2\tau\phi_{\delta_0}}=
	 \int_{\mathbb{R}^{n+1}_{\pm}}|D_{x,y}^k{v}_{\pm}|^2e^{2\tau(\phi_{\delta_0}+\varphi_1)},
\end{equation}
and
\begin{equation}\label{(1b)-2}
	 \int_{\mathbb{R}^{n+1}_{\pm}}|\der_t({v}_{\pm}e^{\tau\varphi_1})|^2e^{2\tau\phi_{\delta_0}}\geq
	\int_{\mathbb{R}^{n+1}_{\pm}}\left[\frac{1}{2}|\der_t v_\pm|^2-C\tau^2b^2|v_\pm|^2\right]e^{2\tau(\phi_{\delta_0}+\varphi_1)}
\end{equation}
Moreover, by \eqref{BL} and \eqref{BR}, we have
\begin{equation*}%\label{pag3}
	Y^L(v e^{\tau\varphi_1};\phi_{\delta_0})=Y^L(v ;\phi_{\delta_0}+\varphi_1)\mbox{ and }	Y^R(v e^{\tau\varphi_1};\phi_{\delta_0})=Y^R(v ;\phi_{\delta_0}+\varphi_1),
\end{equation*}
hence, for large enough $\tau$ the extra terms appearing in \eqref{(2)-1} and \eqref{(1b)-2} can be absorbed and  Theorem \ref{thm8.2} is finally proved.
\section{Three-region inequality}\label{treregioni}

\begin{theo}\label{thtrereg}
Let $A(x,t,y)=\sum_\pm A_\pm(x,t,y)$ where $A_\pm$ satisfy assumptions \eqref{ellgen} and \eqref{lipgen}  and let $V$ be bounded function and $W$ a bounded vector valued function such that
\begin{equation*}%\label{VW}
	\|V\|_{L^\infty(\RR^{n+1},\RR)}+\|W\|_{L^\infty(\RR^{n+1},\RR^n)}\leq \lambda_0^{-1}.
\end{equation*}
Let $\tau_0$, $\alpha_+$, $\alpha_-$, $\beta$, $b$ and $\delta_0$ given by Theorem \ref{thm8.2}, and let
\begin{equation}\label{z}
	z(x,t,y)=\frac{\alpha_-y}{\delta_0}+\frac{\beta y^2}{2\delta_0^2}-\frac{|x|^2}{2\delta_0}-\frac{bt^2}{2}.
\end{equation}
There exist $C$ and $R$ depending only on $\lambda_0$, $M_0$ and $n$, such that, if $u$ is a weak solution to the equation
\begin{equation}\label{eqreg}
	-\der_tu+\dive_{x,y}\left(A(x,t,y)D_{x,y} u\right)+W\cdot D_{x,y} u+Vu=0\,\mbox{ in }\,Q_{\delta_0/2}\times(-\delta_0 r_0,\delta_0 r_0),
\end{equation}
then, for $0<R_1, R_2\leq R$,
\begin{equation}\label{trereg}
	\int_{U_2}|u|^2dXdy\leq C \left(\int_{U_1}|u|^2dXdy\right)^{\frac{R_2}{2R_1+3R_2}}\left(\int_{U_3}|u|^2dXdy\right)^{\frac{2R_1+2R_2}{2R_1+3R_2}}
\end{equation}
where
\begin{equation*}
	\begin{aligned}
	U_1&=\left\{(x,t,y)\in\RR^{n+1}\,:\,z(x,t,y)\geq -4R_2,\, \frac{R_1}{8a}<y<\frac{R_1}{a}\right\}\\
		U_2&=\left\{(x,t,y)\in\RR^{n+1}\,:\,z(x,t,y)\geq -R_2,\, y<\frac{R_1}{4a}\right\}\\
			U_3&=\left\{(x,t,y)\in\RR^{n+1}\,:\,z(x,t,y)\geq -4R_2,\, y<\frac{R_1}{a}\right\}
\end{aligned}
\end{equation*}
for
\[a=\alpha_+/\delta_0.\]
\end{theo}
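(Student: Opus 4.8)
The plan is to run the standard Carleman-to-three-balls machine, adapted to the transmission setting, using the weight $\Phi(x,t,y) = \phi_{\delta_0}(x,y) - \frac{b}{2}t^2$ whose level sets are exactly the sublevel sets $\{z \geq c\}$ appearing in the definitions of $U_1, U_2, U_3$ (note $z(x,t,y) = \Phi_{-}(x,t,y)$ is the restriction of $\Phi$ to $y<0$, and more to the point the sets $U_j$ are cut out by the level sets of $\Phi$ together with slabs in the $y$ variable). First I would introduce a cutoff function $\chi \in C^\infty$ depending only on $(y, z)$ — say $\chi = \chi_1(y)\chi_2(z(x,t,y))$ — equal to $1$ on a neighborhood of $U_2$ and supported inside the region where the Carleman estimate of Theorem \ref{thm8.2} applies, i.e. so that $\mathrm{supp}(\chi u) \subset Q_{\delta_0/2}\times(-\delta_0 r_0, \delta_0 r_0)$. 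Crucially $\chi$ must be chosen so that the commutator terms $[\mathcal{L}, \chi]u$ are supported only where either $y \approx R_1/a$, $y \approx R_1/(8a)$ (the $y$-slab boundaries, which will feed $U_1$ and $U_3$) or where $z \approx -4R_2$ (the level-set boundary, feeding $U_3$), and \emph{not} near $U_2$. On the interface $\{y=0\}$, since $u$ is a genuine weak solution of the transmission problem, $h_0(u) = h_1(u) = 0$, so the entire right-hand side interface term $Y^R(\chi u; \Phi)$ reduces to the contributions coming from the cutoff, i.e. from $\chi$ jumping — but if $\chi$ depends smoothly on $(y,z)$ and equals a constant near the slab portion $\{y=0\}$ that meets $U_2$, then $h_0(\chi u), h_1(\chi u)$ are again supported away from the relevant region and can be absorbed or bounded by the commutator regions.

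The main computation then goes: apply \eqref{8.24} (in the form of Remark \ref{bassi}, to absorb the lower-order terms $W\cdot D u + Vu$, which is legitimate for $\tau \geq \tau_0$ large) to $v := \chi u$. Since $\mathcal{L}(v) = [\mathcal{L},\chi]u$ (using the equation \eqref{eqreg} for $u$) is supported in the commutator region $K$ described above, and $K \subset \{z \leq -R_2\} \cup \{y \notin (R_1/(8a), R_1/a)\,\text{-neighborhood issues}\}$, more precisely $K$ splits into a piece $K_1$ near $\{y \approx R_1/(8a)\}$ contained in $U_1$, and a piece $K_3$ contained in $U_3 \setminus U_2$ roughly where $z \in (-4R_2, -2R_2)$. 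On the left-hand side I keep only the zeroth-order bulk term $\tau^3 \int |v|^2 e^{2\tau\Phi}$ restricted to $U_2$, where $\chi = 1$. This yields, after dropping positive terms and using that $[\mathcal{L},\chi]u$ involves at most first derivatives of $u$ times bounded coefficients,
\begin{equation*}
\tau^3 \int_{U_2} |u|^2 e^{2\tau\Phi}\, dXdy \leq C \int_{K_1 \cup K_3} \left(|u|^2 + |D_{x,y}u|^2\right) e^{2\tau\Phi}\, dXdy + (\text{interface terms}).
\end{equation*}
Then I use a Caccioppoli-type energy inequality for the transmission problem (standard, and essentially the regularity result proved in the Appendix guarantees the needed interior bounds) to replace $|D_{x,y}u|^2$ over $K_1 \cup K_3$ by $|u|^2$ over slightly enlarged sets still contained in $U_1$ and $U_3$ respectively, at the cost of a factor $\tau^2$ which is harmless.

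The final step is the optimization. On $U_2$, $e^{2\tau\Phi} = e^{2\tau z} \cdot(\text{the }y\text{-dependent factor})$; on $U_2$ one has $z \geq -R_2$ and $y < R_1/(4a)$, so $\Phi \geq -R_2 - C R_1$ roughly (using $\varphi_+(\delta_0^{-1}y) \leq a y + O(y^2)$ with $a = \alpha_+/\delta_0$), hence $e^{2\tau\Phi} \geq e^{-2\tau(R_2 + cR_1)}$ there for an appropriate constant. On $K_1 \subset U_1$ one has $y \approx R_1/(8a)$ so $\Phi \leq R_1/8 + \ldots$; on $K_3 \subset U_3$ one has $z \leq -2R_2$ hence $\Phi \leq -2R_2 + cR_1$. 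Writing $\int_{U_1}|u|^2 =: \mathcal I_1$, $\int_{U_3}|u|^2 =: \mathcal I_3$, $\int_{U_2}|u|^2 =: \mathcal I_2$, we arrive at
\begin{equation*}
\mathcal I_2 \leq C\left( e^{2\tau(R_1/8 + cR_1 + R_2 + cR_1)} \mathcal I_1 + e^{2\tau(-2R_2 + cR_1 + R_2 + cR_1)} \mathcal I_3 \right) = C\left(e^{A\tau}\mathcal I_1 + e^{-B\tau}\mathcal I_3\right)
\end{equation*}
for suitable positive $A, B$ linear in $R_1, R_2$ (here is where the specific exponents $\frac{R_2}{2R_1+3R_2}$ and $\frac{2R_1+2R_2}{2R_1+3R_2}$ will emerge, after tracking the constants $1/8$ etc. and possibly rescaling $R$ so $R_1, R_2 \leq R$ makes the $cR_1$ corrections dominated — this bookkeeping is the tedious part, not the conceptual one). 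Optimizing over $\tau \geq \tau_0$ by choosing $\tau$ so that the two terms balance, $e^{A\tau}\mathcal I_1 = e^{-B\tau}\mathcal I_3$, i.e. $\tau = \frac{1}{A+B}\log(\mathcal I_3/\mathcal I_1)$, gives the interpolation inequality $\mathcal I_2 \leq C \mathcal I_1^{B/(A+B)} \mathcal I_3^{A/(A+B)}$, which with the computed $A, B$ is exactly \eqref{trereg}; if the balancing $\tau$ falls below $\tau_0$ the estimate is trivial since then $\mathcal I_3 \leq e^{(A+B)\tau_0}\mathcal I_1$ and $\mathcal I_2 \leq \mathcal I_3$ gives it directly.

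I expect the main obstacle to be the choice of cutoff $\chi$: it must simultaneously (i) live only in the admissible support region for Theorem \ref{thm8.2}, (ii) have commutator support disjoint from $U_2$ and split cleanly into a piece inside $U_1$ and a piece inside $U_3$, and (iii) interact correctly with the interface so that $Y^R(\chi u;\Phi)$ does not produce uncontrolled terms — the transmission conditions $h_0(u)=h_1(u)=0$ help enormously here, but one must verify that $h_0(\chi u) = \chi|_{y=0}\,h_0(u) = 0$ and similarly for $h_1$ when $\chi$ is chosen to depend on $y$ through a factor that is locally constant near the part of $\{y=0\}$ inside the cutoff transition. The exponent bookkeeping in the last step is the other fiddly point but is routine.
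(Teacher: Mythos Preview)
Your overall strategy matches the paper's proof closely: cutoff $\theta = \theta_1(z)\theta_2(y)$, apply the Carleman estimate of Theorem~\ref{thm8.2} (via Remark~\ref{bassi}) to $\theta u$, estimate $\Phi$ on the commutator support, use Caccioppoli, then optimize in $\tau$. The bookkeeping you flag as ``tedious'' does produce the stated exponents, essentially because on $U_2$ one actually has $\Phi \geq z \geq -R_2$ with no $cR_1$ correction (since $\Phi - z = (\alpha_+ - \alpha_-)\delta_0^{-1} y \geq 0$ for $y \geq 0$), while on the two commutator pieces $\tilde U_1$, $\tilde U_3$ one gets $\Phi \leq R_1$ and $\Phi \leq R_1/4 - 2R_2$ respectively; this yields $A = 2(R_1+R_2)$, $B = R_2$, hence $B/(A+B) = R_2/(2R_1+3R_2)$.

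There is, however, one genuine gap in your treatment of the interface. You suggest that $h_1(\chi u)$ can be made to vanish, or at least be absorbed into the bulk commutator regions, by choosing the $y$-factor of $\chi$ to be locally constant near $y=0$. This does not work: since the cutoff has the form $\chi_1(y)\chi_2(z)$ and $z$ depends on $x$ and $t$, the tangential gradient $D_x\chi$ is nonzero on $\{y=0\}$ wherever $\chi_2'(z(x,t,0)) \neq 0$, i.e.\ on an annular set $\mathcal{G}$ in the $(x,t)$-plane. Because $A_+ \neq A_-$ at the interface, one gets
\[
h_1(\chi u) = \chi\, h_1(u) + u_+(x,t,0)\,[A_+ - A_-](x,t,0)\, D_{x,y}\chi(x,t,0)\cdot\nu = u_+(x,t,0)\,\eta(x,t),
\]
with $\eta$ supported in $\mathcal{G}$ and genuinely nonzero. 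The term $Y^R(\chi u;\Phi)$ therefore contains the \emph{boundary} quantities $[e^{\tau\Phi(\cdot,0)}\eta\, u_+(\cdot,\cdot,0)]^2_{\frac12,\frac14,\mathbb{R}^n}$ and $\tau\int_{\mathcal{G}}|\eta u_+|^2 e^{2\tau\Phi(\cdot,0)}$, which are not bulk integrals over $K_1\cup K_3$ and cannot simply be ``absorbed by the commutator regions''. The paper handles this with two further steps you omit: first, on $\mathcal{G}$ one has $\Phi(x,t,0) = -|x|^2/(2\delta_0) - bt^2/2 \leq -2R_2$, which supplies the exponential factor $e^{-4\tau R_2}$ (and Proposition~\ref{pr6.4} is used to pull $e^{\tau\Phi(\cdot,0)}\eta$ out of the fractional seminorm); second, the residual trace quantities $[u_+(\cdot,\cdot,0)]^2_{\frac12,\frac14,\mathcal{G}}$ and $\int_{\mathcal{G}}|u_+(\cdot,\cdot,0)|^2$ are converted into a bulk $\int_{U_3}|u|^2$ via trace and interior regularity estimates for the transmission problem. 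This last step is precisely where the $H^{2,1}$ regularity established in the Appendix is used. Without it, the interface contribution cannot be folded into the $e^{-B\tau}\mathcal I_3$ term and the optimization does not close.
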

Notice that function $z$ coincides on $\RR^{n-1}\times\RR\times\RR_{-}$ with the weight function $\Phi$ appearing in Theorem \ref{thm8.2}.
\begin{proof}
We prove the Theorem with the additional assumption $u_{\pm}\in H^{2,1}(Q^{\pm}_{\delta_0/2}\times(-\delta_0 r_0,\delta_0 r_0))$, where $Q^{\pm}_{\delta_0/2}\times(-\delta_0 r_0,\delta_0 r_0))=Q_{\delta_0/2}\times(-\delta_0 r_0,\delta_0 r_0))\cap (\RR^{n}\times\RR_{\pm})$, in appendix we show that indeed the weak solution $u$ satisfy such an additional assumption. After performing a standard density argument we can apply Theorem \ref{thm8.2} to the function $u\theta$ where $\theta$ is a cut off function such that ${\rm supp}\, \theta\subset Q_{\delta_0/2}\times(-\delta_0 r_0,\delta_0 r_0)$.
We can assume, that $\alpha_+>\alpha_-$. By following the calculations in the proof of the three-region inequality in the elliptic case (\cite[Theorem 3.1]{flvw}), let us choose
\begin{equation}\label{erre}
	R=\frac{\alpha_-}{16}\min\left\{r_0, \frac{13\alpha_-}{8\beta},\frac{2\delta_0}{19\alpha_-+8\beta}\right\}.
\end{equation}
Given $0<R_1<R_2\leq R$, let $\theta_1(s)\in C^\infty(\RR)$ such that $0\leq\theta_1(s)\leq 1$ and
\begin{equation*}%\label{theta1}
	\theta_1(s)=\left\{\begin{array}{rl}1,& s>-2R_2\\0,& s\leq-3R_2,\end{array}\right.
\end{equation*}
and let  $\theta_2(y)\in C^\infty(\RR)$ such that $0\leq\theta_2(y)\leq 1$ and
\begin{equation*}%\label{theta2}
	\theta_2(y)=\left\{\begin{array}{rl}0,& y\geq\frac{R_1}{2a},\\1,& y<\frac{R_1}{4a}.\end{array}\right.
\end{equation*}
Define
\begin{equation*}
	%\label{theta}
	\theta(x,t,y)=\theta_1(z(x,t,y))\theta_2(y)
\end{equation*}
for $z$ as in \eqref{z}.

The support of function $\theta$ is contained in the set
\[\left\{\,z(x,t,y)<-3R_2,\,\,\, y<\frac{R_1}{2a}\right\};\]
notice that, for $R$ given by \eqref{erre}, the support of  $\theta$ is contained in $Q_{\delta_0/2}\times[-\delta_0r_0,\delta_0r_0]$ (see details in \cite{flvw}).

We can, hence, apply estimate \eqref{8.24} (see also remark \ref{bassi}) to the function $\theta(x,t,y)u(x,t,y)$.
Let us calculate
\begin{equation*}%\label{4t}
\begin{aligned}
	\tilde{\mathcal{L}}(\theta u)=&\theta\tilde{\mathcal{L}}(u)-(\der_t\theta) u+2\sum_\pm H_\pm A_\pm D_{x,y}\theta\cdot D_{x,y}u_\pm\\
	&+\sum_\pm u_\pm\left(\dive\left(A_\pm D_{x,y}\theta\right)\right)+u\,W\cdot D_{x,y}\theta.
	\end{aligned}
\end{equation*}
Since $\tilde{\mathcal{L}}(u)=0$ and since the derivatives of $\theta$ are nonzero only on the set
\[\tilde{U}=\left\{ -3R_2\leq z\leq -2R_2,\,\,y\leq \frac{R_1}{2a}\right\},\]
we have
\begin{equation}\label{1-5t}
	\left|\tilde{\mathcal{L}}(\theta u)\right|\leq C\left(\left|D_{x,y}u\right|+|u|\right)\chi_{\tilde{U}},
\end{equation}
where $\left|D_{x,y}u\right|^2=\sum_\pm H_\pm \left|D_{x,y}u_\pm\right|^2$.

Since $h_0(u)=h_1(u)=0$, we also have
\begin{equation}\label{2-5t}
h_0(\theta u)=\theta h_0(u)=0,
\end{equation}
 \begin{equation}\label{3-5t}
%\begin{aligned}
h_1(\theta u)=\theta h_1(u)+u_+(x,t,0)\eta(x,t)=u_+(x,t,0)\eta(x,t)
\end{equation}
for
\begin{equation*}
\eta(x,t)=\left[A_+(x,t,0)-A_-(x,t,0)\right]D_{x,y}\theta(x,t,0)\cdot \nu.
\end{equation*}
By explicit calculations it is easy to see that
\[D_{x,y}\theta(x,t,0)=\left(-\frac{x}{\delta_0},\frac{\alpha_-}{\delta_0}\right){\theta_1}^{\!\prime}\!\left(-\frac{|x|^2}{2\delta_0}-\frac{b}{2}t^2\right) ,\]
hence, $\eta(x,t)$ is different from zero only in
\[\mathcal{G}=\left\{2R_2<\frac{|x|^2}{2\delta_0}+\frac{b}{2}t^2<3R_2\right\}.\]

By \eqref{8.24}, \eqref{1-5t}, \eqref{2-5t} and \eqref{3-5t}, we have
\begin{equation}\label{3-7t}
	\begin{aligned}\sum_\pm &\sum_{k=0}^1\tau^{3-2k}\int_{\RR^{n+1}_\pm}\left|D^k_{x,y}(\theta u_\pm)\right|^2e^{2\tau\Phi}dXdy\leq C\int_{\tilde{U}}
	\left|D_{x,y}u\right|^2e^{2\tau\Phi}dXdy\\
&	+C\int_{\tilde{U}} 	 \left|u\right|^2e^{2\tau\Phi}dXdy+C\left[e^{\tau\Phi(x,t,0)}\eta(x,t)u_+(x,t,0)\right]^2_{\frac{1}{2},\frac{1}{4},\RR^n}\\
&+C\tau\int_{\mathcal{G}}\left|e^{\tau\Phi(x,t,0)}\eta(x,t)u_+(x,t,0) \right|^2dX.
\end{aligned}\end{equation}

Notice that $\tilde{U}
\subset \tilde{U}_1\cup\tilde{U}_3$, where
\[\tilde{U}_1=\left\{-3R_2\leq z,\,\,\frac{R_1}{4a}\leq y\leq \frac{R_1}{2a}\right\},\]
and
\[\tilde{U}_3=\left\{-3R_2\leq z\leq -2R_2,\,\,y<\frac{R_1}{4a}\right\}.\]
The weight function $\Phi$ can be written as
\[\Phi(x,t,y)=\left\{\begin{array}{rcl}\frac{(\alpha_+-\alpha_-)y}{\delta_0}+z(x,t,y)&\mbox{for}&y\geq 0\\
z(x,t,y)&\mbox{for}&y<0\end{array},\right.\]
hence, by \eqref{erre},
\begin{equation}\label{11t}
	\Phi(x,t,y)\leq\left\{\begin{array}{lcl} \frac{R_1}{2}+\frac{\beta}{2}\frac{R_1^2}{4\alpha_+^2}\leq R_1&\mbox{in}&\tilde{U}_1
	\\ \frac{R_1}{4a}\left(\frac{\alpha_+-\alpha_-}{\delta_0}\right)-2R_2\leq \frac{R_1}{4}-2R_2&\mbox{in}&\tilde{U}_3\end{array}.\right.
\end{equation}
By \eqref{11t},
\begin{equation}\label{12t}
	\int_{\tilde{U}} 	\left|u\right|^2e^{2\tau\Phi}dXdy\leq C\left(e^{2\tau R_1}\int_{\tilde{U}_1} 	\left|u\right|^2dXdy+ e^{\tau(\frac{R_1}{2}-4R_2)}
	\int_{\tilde{U}_3} 	\left|u\right|^2dXdy\right).
\end{equation}
A similar estimate can be written for $D_{x,y}u$ instead of $u$. We can, then, use a parabolic Caccioppoli-type inequality and, observing that $\tilde{U}_1\subset\subset U_1$ and $\tilde{U}_3\subset\subset U_3$, we can write
\begin{equation}\label{1-12t}
	\int_{\tilde{U}} 	\left|D_{x,y}u\right|^2e^{2\tau\Phi}dXdy\leq  C\left(e^{2\tau R_1}\int_{U_1} 	\left|u\right|^2dXdy+ e^{\tau(\frac{R_1}{2}-4R_2)}
	\int_{U_3} 	\left|u\right|^2dXdy\right).
\end{equation}
In the set $\mathcal{G}$, that contains the support of $\eta$, we have
\[\Phi(x,t,0)=-\left(\frac{|x|^2}{2\delta_0}+\frac{b}{2}t^2\right)\leq-2R_2,\]
hence
\begin{equation}\label{1-9t}
	\int_{\mathcal{G}}\left|e^{\tau\Phi(x,t,0)}\eta(x,t)u_+(x,t,0) \right|^2dX\leq
Ce^{-4\tau R_2}	\int_{\mathcal{G}}\left|u_+(x,t,0) \right|^2dX
\end{equation}
and, by Proposition \ref{pr6.4} (applying \eqref{6.26} and also \eqref{6.26t} for $\alpha=1/4$ and $\beta=1/2$), we have
\begin{equation}\label{1-8t}
	\left[e^{\tau\Phi(x,t,0)}\eta(x,t)u_+(x,t,0)\right]^2_{\frac{1}{2},\frac{1}{4},\RR^n}
\leq C e^{-4\tau R_2}\left(\left[u_+(x,t,0)\right]^2_{\frac{1}{2},\frac{1}{4},\mathcal{G}}+\tau^2\int_{\mathcal{G}}\left|u_+(x,t,0)\right|^2dX\right).
\end{equation}
By putting together \eqref{3-7t}, \eqref{12t}, \eqref{1-12t}, \eqref{1-9t} and \eqref{1-8t}, we get
\begin{equation}\label{10t}
	\begin{aligned}\sum_\pm& \sum_{k=0}^1\tau^{3-2k}\int_{\RR^{n+1}_\pm}\left|D^k_{x,y}(\theta u_\pm)\right|^2e^{2\tau\Phi}dXdy\leq
	C\left(e^{2\tau R_1}\int_{U_1} 	\left|u\right|^2dXdy\right.\\
	&\left.+ e^{\tau(\frac{R_1}{2}-4R_2)}
	\int_{U_3} 	\left|u\right|^2dXdy\right) +C e^{-4\tau R_2}\left(\left[u_+(x,t,0)\right]^2_{\frac{1}{2},\frac{1}{4},\mathcal{G}}+\tau^2\int_{\mathcal{G}}\left|u_+(x,t,0)\right|^2dX\right).
	\end{aligned}
\end{equation}
On the other hand, since $\mathcal{G}\subset\subset U_3$, by using traces and regularity estimates (see \cite[Theorem 5.1]{LSU}), we have
\begin{equation}\label{2-15t}
		\left(\left[u_+(x,t,0)\right]^2_{\frac{1}{2},\frac{1}{4},\mathcal{G}}+\tau^2\int_{\mathcal{G}}\left|u_+(x,t,0)\right|^2dX\right)\leq
	\tau^2\int_{U_3}\left|u\right|^2dXdy.
\end{equation}
Let us now consider the left-hand side of \eqref{10t}. On the set
\[U_2=\left\{z\geq -R_2,\, y<\frac{R_1}{4a}\right\}\]
we have $\theta\equiv 1$ and $\Phi\geq -R_2$, hence
\begin{equation}\label{13t}
\sum_\pm \sum_{k=0}^1\tau^{3-2k}\int_{\RR^{n+1}_\pm}\left|D^k_{x,y}(\theta u_\pm)\right|^2e^{2\tau\Phi}dXdy
\geq \tau^3 e^{-2\tau R_2}\int_{U_2}\left|u\right|^2dXdy.
\end{equation}
By \eqref{10t}, \eqref{2-15t} and \eqref{13t} we get, for $\tau\geq\tau_0$
\begin{equation}\label{16t}
	\int_{U_2}\left|u\right|^2dXdy\leq C\left(e^{2\tau (R_1+R_2)}\int_{U_1}\left|u\right|^2dXdy+e^{-\tau R_2}\int_{U_3}\left|u\right|^2dXdy\right).
\end{equation}
Now we want to choose $\tau$ in order to get \eqref{trereg}.
Let us denote by $\varepsilon^2=\int_{U_1}\left|u\right|^2dXdy$, $E^2=\int_{U_3}\left|u\right|^2dXdy$
so that \eqref{16t} becomes
\begin{equation}\label{16tbis}
	\int_{U_2}\left|u\right|^2dXdy\leq C\left(e^{2\tau (R_1+R_2)}\varepsilon^2+e^{-\tau R_2}E^2\right).
\end{equation}
Let $\tau_1$ be such that
\[e^{2\tau_1 (R_1+R_2)}\varepsilon^2=e^{-\tau_1 R_2}E^2\]
that is
\[\tau_1=\frac{1}{2R_1+3R_2}\log\left(\frac{E^2}{\varepsilon^2}\right)\]
If $\tau_1>\tau_0$, then w can choose $\tau=\tau_1$ and \eqref{16tbis} gives
\[\int_{U_2}\left|u\right|^2dXdy\leq 2C (\varepsilon^2)^{\frac{R_2}{2R_1+3R_2}}(E^2)^{\frac{2R_1+2R_2}{2R_1+3R_2}}\]
that is \eqref{trereg}.

On the other hand, if  $\tau_1\leq\tau_0$, it means that
\[\frac{1}{2R_1+3R_2}\log\left(\frac{E^2}{\varepsilon^2}\right)\leq\tau_0,\]
that is
\[E^2\leq e^{\frac{\tau_0}{R_1+2R_2}}\varepsilon^2,\]
and, hence, we can write
\[\int_{U_2}\left|u\right|^2dXdy\leq E^2=(E^2)^{\frac{2R_1+2R_2}{2R_1+3R_2}}(E^2)^{1-\frac{2R_1+2R_2}{2R_1+3R_2}}
\leq C (\varepsilon^2)^{\frac{R_2}{2R_1+3R_2}}(E^2)^{\frac{2R_1+2R_2}{2R_1+3R_2}}\]
that is, again, \eqref{trereg}.

\end{proof}
\section{appendix}\label{appendice}
In what follows we assume that $A$, $V$, $W$ satisfy the same assumptions of Theorem \ref{thtrereg}. Moreover, for the sake of brevity, we denote $\Omega_{\delta_0}=Q_{\delta_0/2}\times(-\delta_0 r_0,\delta_0 r_0)$, $I_{\delta_0}=[-\delta_0,\delta_0]$ and $\Omega_{\delta_0/2}^{\pm}=\Omega_{\delta_0/2}\cap (\RR^{n}\times\RR_{\pm})$. We recall that
$H^{1,\frac{1}{2}}(\Omega_{\delta_0})$ the space of functions $f\in L^2(\Omega_{\delta_0})$ satisfying $\|f\|_{H^{1,\frac{1}{2}}(\Omega_{\delta_0})}<\infty$   where

$$\|f\|_{H^{1,\frac{1}{2}}(\Omega_{\delta_0})}=\left(\|f\|^2_{L^{2}(\Omega_{\delta_0})}+\|D_{x,y}f\|^2_{L^{2}(\Omega_{\delta_0})}
+[f]^2_{0,\frac{1}{2},\Omega_{\delta_0}}\right)^{1/2},$$
and
$$[f]_{0,\frac{1}{2},\Omega_{\delta_0}}=\left[\int_{\Omega_{\delta_0}}\int_{I_{\delta_0}}\int_{I_{\delta_0}}\frac{|f(x,t_1,y)-f(x,t_2,y)|^2}
{|t_1-t_2|^{2}}dt_1dt_2dxdy\right]^{1/2}.$$

\begin{pr}\label{regolarit}
Let $u\in H^{1,\frac{1}{2}}(\Omega_{\delta_0})$ be a weak solution to the equation \eqref{eqreg} then $u_{\pm}\in H^{2,1}(\Omega_{\delta_0/2}^{\pm})$.

\end{pr}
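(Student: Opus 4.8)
The plan is to establish interior-type $H^{2,1}$ regularity on each side of the interface by a standard but careful difference-quotient (Nirenberg) argument, treating the tangential variables $x$ and the time variable $t$ first, and then recovering the second normal derivative $\der_{yy}u_\pm$ directly from the equation. I would first reduce to a neighbourhood of the interface: away from $\{y=0\}$ the coefficients $A$ are genuinely Lipschitz in $(x,t,y)$ with respect to the parabolic distance, so classical interior parabolic regularity (e.g. \cite{LSU}) already gives $u\in H^{2,1}_{loc}$ there; the only issue is a neighbourhood of $\{y=0\}$, where one must work on $\Omega_{\delta_0/2}^+$ and $\Omega_{\delta_0/2}^-$ separately and keep track of the transmission conditions $h_0(u)=h_1(u)=0$. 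Fix a cutoff $\theta\in C_0^\infty(\Omega_{\delta_0})$ with $\theta\equiv 1$ on $\Omega_{\delta_0/2}$.

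The core step is the tangential estimate. For $h\in\RR^{n-1}$ small and $1\le j\le n-1$, let $\partial_j^h f$ denote the difference quotient in $x_j$; similarly let $\partial_t^{h}$ be the difference quotient of step $h$ in $t$ but only of \emph{half order}, i.e. work with the $H^{0,1/2}$ seminorm in time rather than a full time derivative. Testing the weak formulation of \eqref{eqreg} with $-\partial_j^{-h}(\theta^2\partial_j^h u)$, using that $A_\pm$ is Lipschitz in $x$ so that $\partial_j^h(A\,D_{x,y}u)=A\,D_{x,y}\partial_j^h u+(\partial_j^h A)D_{x,y}u$, absorbing the good term $\int \theta^2 A D_{x,y}\partial_j^h u\cdot D_{x,y}\partial_j^h u\ge\lambda_0\int\theta^2|D_{x,y}\partial_j^h u|^2$ by ellipticity \eqref{ellgen}, and bounding the remaining terms (the $(\partial_j^h A)D_{x,y}u$ term, the lower order $W,V$ terms, the $\der_t$ term which is handled by the usual parabolic energy trick integrating in $t$) by Cauchy--Schwarz and the $H^{1,\frac12}$ bound on $u$, one obtains, uniformly in $h$,
\begin{equation*}
\|\theta\,D_{x,y}\partial_j^h u\|_{L^2(\Omega_{\delta_0})}\le C\|u\|_{H^{1,\frac12}(\Omega_{\delta_0})}.
\end{equation*}
Letting $h\to0$ gives $D_{x,y}\der_{x_j}u\in L^2_{loc}$ for $j=1,\dots,n-1$, i.e. $\der_{x_j x_k}u$, $\der_{x_j y}u\in L^2(\Omega_{\delta_0/2}^\pm)$. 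The time difference quotients, handled in the same energy scheme, upgrade $u$ to the correct anisotropic order in $t$; in particular one recovers $\der_t u\in L^2$ with the parabolic scaling. This is where the main work lies: one must be slightly careful that the transmission condition $h_1(u)=0$ is exactly what makes the boundary terms on $\{y=0\}$ coming from the integration by parts in the $E_{s,\pm}$-type manipulation cancel between the two sides, so that no trace term survives — this is the analogue of the fact that a solution across the interface behaves like an interior solution, not a boundary one, and it is the place the argument genuinely uses the transmission structure rather than treating $\{y=0\}$ as a boundary.

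Finally, having $\der_{x_jx_k}u$, $\der_{x_j y}u$ and $\der_t u$ in $L^2(\Omega_{\delta_0/2}^\pm)$, I recover the missing derivative $\der_{yy}u_\pm$ algebraically from the PDE: on each side
\begin{equation*}
a_{nn}^\pm\,\der_{yy}u_\pm=\der_t u_\pm-\sum_{(i,j)\neq(n,n)}\der_i\!\big(a_{ij}^\pm\der_j u_\pm\big)-W\cdot D_{x,y}u_\pm-Vu_\pm,
\end{equation*}
and since $a_{nn}^\pm\ge\lambda_0>0$ by \eqref{ellgen} and the right-hand side is in $L^2(\Omega_{\delta_0/2}^\pm)$ (the Lipschitz-in-$x$ regularity of the $a_{ij}^\pm$ lets us expand $\der_i(a_{ij}^\pm\der_j u)$ into terms with one derivative on $a_{ij}^\pm$ times $\der_j u\in L^2$ and $a_{ij}^\pm$ times $\der_{ij}u\in L^2$, with $j\le n-1$ in all the mixed terms), we conclude $\der_{yy}u_\pm\in L^2(\Omega_{\delta_0/2}^\pm)$. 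Collecting all second-order space derivatives and $\der_t u$ in $L^2$ on each side gives $u_\pm\in H^{2,1}(\Omega_{\delta_0/2}^\pm)$, as claimed. The only obstacle worth flagging is the bookkeeping of the interface boundary terms in the difference-quotient step; everything else is the routine parabolic energy method.
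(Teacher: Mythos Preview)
Your approach—the classical Nirenberg difference-quotient method—is genuinely different from the paper's. The paper never runs energy or difference-quotient estimates directly on $u$; instead it mollifies $A_\pm$ to smooth $A_\pm^{(m)}$, solves the approximate transmission problems to obtain $u^{(m)}\to u$ in $L^2$, invokes \cite[Ch.~III, \S13]{LSU} to place each $u_\pm^{(m)}$ in $H^{2,1}$, and then applies the Carleman estimate \eqref{8.24} of Theorem~\ref{thm8.2} itself to $u^{(m)}\zeta$ (for a fixed $\tau$ and a cutoff $\zeta$) to obtain $H^{2,1}$ bounds \emph{uniform in $m$}, finishing by weak compactness. So the paper bootstraps the regularity from its own main theorem, whereas your route, if it goes through, is self-contained and independent of the Carleman machinery.

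Your tangential-$x$ step and the algebraic recovery of $\der_{yy}u_\pm$ are correct; in fact, since you test in the \emph{global} weak formulation across $\{y=0\}$, the interface contributions never appear explicitly (they are encoded in the weak form), so the cancellation you flag as the main obstacle is automatic rather than delicate. The real soft spot is the time step. Under \eqref{lipgen} the matrix $A_\pm$ is only $C^{1/2}$ in $t$, so $\partial_t^h A = O(|h|^{-1/2})$, and the energy scheme you sketch yields at best $\|D_{x,y}\partial_t^h u\|_{L^2}\le C|h|^{-1/2}$, that is $D_{x,y}u\in H^{0,1/2}_t$, which is strictly weaker than $\der_t u\in L^2$. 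You then cannot feed $\der_t u\in L^2$ into the algebraic identity for $\der_{yy}u_\pm$, and the plan stalls. This can be repaired—after the tangential step the equation on each side reduces to a one-dimensional-in-$y$ transmission problem $\der_t u_\pm-\der_y(a_{nn}^\pm\der_y u_\pm)\in L^2$, and a separate maximal-regularity or freezing-in-$t$ argument extracts $\der_t u$ and $\der_{yy}u_\pm$ simultaneously—but that is not ``the same energy scheme'' and deserves its own paragraph. The paper's approximation-plus-Carleman device sidesteps exactly this difficulty.
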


 We give a sketch of the proof. We limit ourselves to the case $W=0$, $V=0$. Let $\{A^{(m)}_{\pm}\}_{m\in\mathbb{N}}$ be a sequence of smooth symmetric matrix-valued functions that approximate $A_{\pm}$ in $L^{\infty}(\Omega_{\delta_0})$, and that satisfies
\begin{equation*}
\widetilde{\lambda}_0|z|^2\leq A^{(m)}_{\pm}(x,t,y)z\cdot z\leq \widetilde{\lambda}^{-1}_0|z|^2,\, \quad \forall (x,t,y)\in \Omega_{\delta_0},\,\forall\, z\in \mathbb{R}^n,\,
\end{equation*}
and
\begin{equation*}
|A^{(m)}_{\pm}(x',t',y')-A^{(m)}_{\pm}(x,t,y)|\leq \widetilde{M}_0\left(|x'-x|^2+|y'-y|^2+|t'-t|\right)^{1/2},
\end{equation*}
where $\widetilde{\lambda}_0$ and $\widetilde{M}_0$ depend only on $\lambda_0$ and $M_0$. For every $m\in \mathbb{N}$, let $u^{(m)}$ be a weak solution to

\begin{equation*}
\begin{cases}
\begin{array}{l}
-\der_tu^{(m)}+\dive_{x,y}\left(A^{(m)}(x,t,y)D_{x,y} u^{(m)}\right)=0,\quad \mbox{in } \Omega_{3\delta_0/4},\\
u^{(m)}=u, \quad \mbox{on } \Gamma,
\end{array}
\end{cases}
\end{equation*}
where $\Gamma$ is the parabolic boundary of $\Omega_{3\delta_0/4}$. Now, see \cite[Chapter III, Section 13]{LSU}, we have $u^{(m)}_{\pm}\in H^{2,1}(\Omega^{\pm}_{\delta_1})$ for every $0<\delta_1<\delta_0$ and

\begin{equation}\label{reg2}
\begin{aligned}
&\|u^{(m)}-u\|:=\max_{t\in I_{3\delta_0/4}}
\|u^{(m)}-u(\cdot,t,\cdot)\|_{L^2(Q_{3\delta_0/4})}+\|D_{x,y}(u^{(m)}-u)\|_{L^2(\Omega_{3\delta_0/4})}\\
&\leq C\|D_{x,y}u\|_{L^2(\Omega_{3\delta_0/4})}\|A-A^{(m)}\|_{L^{\infty}(\Omega_{3\delta_0/4})},
\end{aligned}
\end{equation}
where $C$ doesn't depend by $m$ (it depends only by $\widetilde{\lambda}_0$). By  \eqref{reg2}
we have

\begin{equation}\label{reg-conv}
u^{(m)}\rightarrow u \mbox{, as } m\rightarrow\infty \quad \mbox{, in } L^2(\Omega_{\delta_0/2}).
\end{equation}
Now we apply the Carleman estimate \eqref{8.24}, for a fixed $\tau$, to the operator $\mathcal{L}$. Since $u^{(m)}_{\pm}\in H^{2,1}(\Omega^{\pm}_{3\delta_0/4})$, for every $m\in \mathbb{N}$, after performing a standard density argument we can apply \eqref{8.24} to $u^{(m)}\zeta$ where $\zeta\in C^{\infty}_0 (\Omega_{3\delta_0/4})$ is a cut off function such that $0\leq \zeta\leq 1$ and $\zeta=1$ in $\Omega_{\delta_0/2}$. It is easy to check that that

\begin{equation}\label{reg3}
\mathcal{L}(u^{(m)}\zeta)\leq C\left(|u^{(m)}|+|D_{x,y}u^{(m)}|\right), \quad \mbox{ in } \Omega_{3\delta_0/4} \mbox{, }\quad \forall m\in \mathbb{N},
\end{equation}
where $C$ depends only on $\widetilde{\lambda}_0$ and $\widetilde{M}_0$.
Moreover by trace inequality and \eqref{reg2} we have, for every $m\in \mathbb{N}$

\begin{equation}\label{reg4}
\begin{aligned}
& Y^R(u^{(m)}\zeta;\Phi)\leq C \left(\left[u_+(x,t,0)\right]^2_{\frac{1}{2},\frac{1}{4},\Omega_{5\delta_0/8}}+
\int_{\Omega_{5\delta_0/8}}\left|u_+(x,t,0)\right|^2dX\right)\\
&\leq C'\|D_{x,y}u\|^2_{L^2(\Omega_{3\delta_0/4})}\left(1+\|A-A^{(m)}\|^2_{L^{\infty}(\Omega_{3\delta_0/4})}\right)\leq \widetilde{C},
\end{aligned}
\end{equation}
where $C$, $C'$ and $\widetilde{C}$ are independent of $m$.

Now by \eqref{8.24}, \eqref{reg3}, \eqref{reg4} we get

\begin{equation*}
\sum_{\pm}\left(\sum_{k=0}^2\int_{\Omega^{\pm}_{\delta_0/2}}|D_{x,y}^k{u}^{(m)}_{\pm}|^2dXdy+
\int_{\Omega^{\pm}_{\delta_0/2}}|\partial_t{u}^{(m)}_{\pm}|^2dXdy\right)\leq C,
\end{equation*}
where $C$ is independent of $m$. Therefore there exists a subsequence $\{u^{(j_m)}\}_{m\in\mathbb{N}}$ such that $\{D_{x,y}^ku^{(j_m)}\}_{m\in\mathbb{N}}$ and $\{\partial_tu^{(j_m)}\}_{m\in\mathbb{N}}$ weakly converge in $L^2(\Omega_{\delta_0/2})$ so that, taking into account \eqref{reg-conv}, we conclude the proof.
\qed

\section*{Acknowledgement}
The authors were partially supported by Gruppo Nazionale per l'Analisi Matematica, la Probabilit\`{a} e le loro Applicazioni (GNAMPA) of the Istituto Nazionale di Alta Matematica (INdAM).
EF was partially supported by FIR Project Geometrical and Qualitative Aspects of PDE's.

\end{document}